%% LyX 1.6.5 created this file.  For more info, see http://www.lyx.org/.
%% Do not edit unless you really know what you are doing.
\documentclass[11pt,oneside,english]{amsart}
\usepackage[T1]{fontenc}
\usepackage[latin1]{inputenc}
\usepackage[a4paper]{geometry}
\geometry{verbose,tmargin=25mm,bmargin=25mm,lmargin=39mm,rmargin=39mm}
\pagestyle{plain}
\setcounter{tocdepth}{1}
\usepackage{textcomp}
\usepackage{amsthm}
\usepackage{amssymb}
\usepackage{esint}

\makeatletter
%%%%%%%%%%%%%%%%%%%%%%%%%%%%%% Textclass specific LaTeX commands.
\numberwithin{equation}{section}
\numberwithin{figure}{section}
\theoremstyle{plain}
\newtheorem{thm}{Theorem}[section]
  \theoremstyle{plain}
  \newtheorem{cor}[thm]{Corollary}
  \theoremstyle{plain}
  \newtheorem{lem}[thm]{Lemma}
  \theoremstyle{plain}
  \newtheorem{prop}[thm]{Proposition}
  \theoremstyle{remark}
  \newtheorem{rem}[thm]{Remark}
  \theoremstyle{plain}
  \newtheorem{conjecture}[thm]{Conjecture}

%%%%%%%%%%%%%%%%%%%%%%%%%%%%%% User specified LaTeX commands.

\def\makebbb#1{
    \expandafter\gdef\csname#1\endcsname{
        \ensuremath{\Bbb{#1}}}
}\makebbb{R}\makebbb{N}\makebbb{Z}\makebbb{C}\makebbb{H}\makebbb{E}\makebbb{H}\makebbb{P}\makebbb{B}\makebbb{K}\makebbb{E}

\usepackage{babel}

\usepackage{babel}

\makeatother

\usepackage{babel}

\makeatother

\usepackage{babel}

\begin{document}

\title{Moser-Trudinger type inequalities for complex Monge-Ampère operators
and Aubin's {}``hypothèse fondamentale''}

\author{Robert J. Berman, Bo Berndtsson}

\email{robertb@chalmers.se, bob@chalmers.se}

\curraddr{Mathematical Sciences - Chalmers University of Technology and University
of Gothenburg - SE-412 96 Gothenburg, Sweden }
\begin{abstract}
We prove Aubin's {}``Hypothese fondamentale'' concerning the existence
of Moser-Trudinger type inequalities on any integral compact Kähler
manifold $X.$ In the case of the anti-canonical class on a Fano manifold
the constants in the inequalities are shown to only depend on the
dimension of $X$ (but there are counterexamples to the precise value
proposed by Aubin). In the different setting of pseudoconvex domains
in complex space we also obtain a quasi-sharp version of the inequalities
and relate it to Brezis-Merle type inequalities. The inequalities
are shown to be sharp for $S^{1}-$invariant functions on the unit-ball.
We give applications to existence and blow-up of solutions to complex
Monge-Ampère equations of mean field (Liouville) type. 

\tableofcontents{}
\end{abstract}
\maketitle

\section{Introduction}

As shown by Trudinger in the seminal work \cite{tr} there is a limiting
exponential version of the critical Sobolev inequalities which, in
the case of the plane, may be formulated as the existence of positive
constants $c$ and $C$ such that

\begin{equation}
\int_{\Omega}e^{c\left(\frac{u}{\left\Vert \nabla u\right\Vert _{\Omega}}\right)^{2}}dV\leq C\label{eq:trudingers ineq}\end{equation}
for any, say smooth, function $u$ vanishing on the boundary of a
domain $\Omega$ in $\R^{2}.$ Motivated by the Nirenberg problem
for constructing conformal metrics on a real surface with prescribed
positive curvature, Moser \cite{m} obtained the sharp constant $c=4\pi$
in Trudinger's inequality \ref{eq:trudingers ineq}. The relation
to the Nirenberg problem appears in the following consequence of the
previous inequality: \begin{equation}
\log\int_{\Omega}e^{-u}dV\leq A\left\Vert \nabla u\right\Vert _{\Omega}^{2}+B\label{eq:mosers ineq}\end{equation}
Here $e^{-u}$ plays the role of the conformal factor of a metric
on $\Omega.$ As shown by Moser the inequalities also hold when the
domain $\Omega$ is replaced by the two-sphere - which is the setting
for the Nirenberg problem - and then the extremals $u$ of the inequality
correspond to metrics $g_{u}$ with constant positive curvature (with
$A=1/16\pi,$ the sharp constant). Conversely, the latter inequality
\ref{eq:mosers ineq}, with the sharp constant, implies an in equality
of the form \ref{eq:trudingers ineq}, but only with quasi-sharp constants,
i.e. the two inequalities are equivalent {}``modulo $\epsilon".$

There has been a wealth of work on extending Moser-Trudinger inequalities
in various directions in real analysis and conformal geometry (see
for example \cite{Bec,font,au2,f-m} and references therein). However,
the present paper is concerned with a different \emph{complex }variant
of these inequalities first proposed by Aubin \cite{au}, motivated
by the existence problem for Kähler-Einstein metrics with positive
curvature on complex (Fano) manifolds (see also \cite{din,d-t,p-s+}).
More precisely, we will consider two different settings: (1) compact
complex (Kähler) manifolds (without boundary) and (2) pseudoconvex
domains in $\C^{n}.$ A characteristic feature of the complex setting
(when $n>1)$ is that it is considerably more non-linear than the
real one. Indeed, the corresponding inequalities (see below) only
hold for a \emph{convex }subspace $\mathcal{H}_{0}$ of functions
$u$ and moreover the Laplacian $\Delta$ appearing in the Dirichlet
energy $\left\Vert \nabla u\right\Vert _{\Omega}^{2}(=\int_{\Omega}-u\Delta udV)$
has to be replaced by fully \emph{non-linear} complex Monge-Ampère
operators. Moreover, in the compact setting $(1)$ the space $\mathcal{H}_{0}$
is not even a cone and the corresponding Monge-Ampere operator is
not $n-$homogeneous (in contrast to the setting $(2)).$

\subsection{Statement of the main results }

\subsubsection{The setting of a compact Kähler manifold}

Let $(X,\omega)$ be a compact Kähler manifold without boundary of
complex dimension $n$ and recall that a smooth function $u$ on $X$
is called a \emph{Kähler potential} if \[
\omega_{u}:=\omega+\frac{i}{2\pi}\partial\bar{\partial}u:=\omega+dd^{c}u>0,\]
 i.e. $\omega_{u}$ is a Kähler metric in the cohomology class $[\omega]\in H^{2}(X,\R).$
We will denote by $\mathcal{H}_{0}(X,\omega)$ the convex space of
all such $u$ normalized so that $\sup_{X}u=0$ and we will consider
the following well-known functional on $\mathcal{H}_{0}(X,\omega):$
\begin{equation}
\mathcal{E}_{\omega}(u):=\frac{1}{(n+1)!}\sum_{j=0}^{n}\int_{X}u(\omega_{u})^{j}\wedge(\omega)^{n-j}\label{eq:energy intro}\end{equation}
that we will refer to as (minus) the\emph{ Monge-Ampère energy}. 
\begin{thm}
\label{thm:aubin hyp intro}Let $(X,\omega)$ be a Kähler manifold
such that $[\omega]\in H^{2}(X,\Z)\otimes\R.$ Then the following
Moser-Trudinger type inequality holds for any function $u$ in $\mathcal{H}_{0}(X,\omega)$
and positive number $k:$ \begin{equation}
\log\int_{X}e^{-ku}dV\leq Ak^{n+1}(-\mathcal{E}_{\omega}(u))+B\label{eq:m-t in theorem aubin intro}\end{equation}
 for some positive constants $A$ and $B$ (given a volume form $dV).$
More precisely, the constant $A$ may be replaced by $(1+C_{1}/k)$
and $B$ by $(1+C_{2}/k)$ for certain invariants $C_{1}$ and $C_{2}$
of $\omega$ (see \ref{eq:upper lower bounds on bergman}). 
\end{thm}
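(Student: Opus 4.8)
The plan is to \emph{quantize}. The integrality of $[\omega]$ lets me pick an ample line bundle $L$ with $c_1(L)=[\omega]$ and a smooth Hermitian metric $e^{-\phi_0}$ on $L$ whose curvature form is $\omega$, so that for $u\in\mathcal{H}_0(X,\omega)$ the weight $\psi:=\phi_0+u$ has curvature $\omega_u\ge 0$ and, since $u\le 0$, satisfies $\psi\le\phi_0$. (If $[\omega]$ is only a real combination of integral classes one first approximates it by rational K\"ahler classes and passes to the limit at the end, which only perturbs $B$.) It suffices to treat $k\in\N$: for general $k>0$ one has $e^{-ku}\le e^{-(\lfloor k\rfloor+1)u}$ because $u\le 0$, so the integer case applies with $\lfloor k\rfloor+1\le k+1$; the sharper dependence of the constants on $k$ will instead be read off from Riemann--Roch. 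Fix $k\in\N$, put $N_k=\dim H^0(X,kL)$, and for a weight $\chi$ on $L$ let $G_k(\chi)=\bigl(\int_X s_i\bar s_j\,e^{-k\chi}\,dV\bigr)_{i,j}$ be the Gram matrix of a fixed basis $(s_i)$ of $H^0(X,kL)$. I will use the \emph{quantized energy} $\mathcal{L}_k(\chi):=\tfrac1{2kN_k}\log\det G_k(\chi)$, whose difference $\mathcal{L}_k(\chi)-\mathcal{L}_k(\chi')$ does not depend on the chosen basis.

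\emph{Step 1: a Moser--Trudinger inequality at finite level.} Let $\beta_k=\sum_i|s_i^0|^2e^{-k\phi_0}$ be the Bergman density of the reference metric, where $(s_i^0)$ is orthonormal for $\langle\cdot,\cdot\rangle_{k\phi_0}$; it is smooth and strictly positive, hence $\beta_k\ge c_k>0$ on $X$ for a constant $c_k$ depending only on $(X,L,\phi_0,\omega,dV)$ and not on $u$. Then
\[
\int_X e^{-ku}\,dV\le\frac1{c_k}\int_X e^{-ku}\beta_k\,dV=\frac1{c_k}\sum_i\|s_i^0\|_{k\psi}^2=\frac1{c_k}\operatorname{Tr}\widetilde G_k(\psi),\qquad \widetilde G_k(\chi):=\bigl(\langle s_i^0,s_j^0\rangle_{k\chi}\bigr)_{i,j}.
\]
Since $u\le 0$ gives $e^{-ku}\ge 1$, we have $\widetilde G_k(\psi)\ge\widetilde G_k(\phi_0)=\mathrm{Id}$, so all eigenvalues $\mu_1,\dots,\mu_{N_k}$ of $\widetilde G_k(\psi)$ are $\ge 1$ and hence $\det\widetilde G_k(\psi)=\prod_i\mu_i\ge\max_i\mu_i\ge\frac1{N_k}\operatorname{Tr}\widetilde G_k(\psi)$. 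As $\det\widetilde G_k(\psi)=\det G_k(\psi)/\det G_k(\phi_0)$ this yields
\[
\log\int_X e^{-ku}\,dV\ \le\ \log\frac{N_k}{c_k}+2kN_k\bigl(\mathcal{L}_k(\psi)-\mathcal{L}_k(\phi_0)\bigr),
\]
and $\log(N_k/c_k)$ stays bounded as $k\to\infty$ (both $N_k$ and $c_k$ grow like $k^n$), which will produce the asserted form of $B$.

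\emph{Step 2: comparing the quantized energy with $\mathcal{E}_\omega$.} It remains to bound $\mathcal{L}_k(\psi)-\mathcal{L}_k(\phi_0)$ by a multiple of $-\mathcal{E}_\omega(u)$ \emph{uniformly in} $u\in\mathcal{H}_0$. Differentiating $t\mapsto\mathcal{L}_k(\phi_0+tu)$ and inserting Tian's asymptotics $\sum_i|\sigma_i|^2e^{-k\chi}\,dV\sim\tfrac{k^n}{n!}\omega_\chi^n$ for the Bergman density of a smooth positively curved weight $\chi$ gives $\frac{d}{dt}\mathcal{L}_k(\phi_0+tu)\to-\frac{n!}{2V}\frac{d}{dt}\mathcal{E}_\omega(tu)$, with $V=\int_X\omega^n$, whence $\mathcal{L}_k(\psi)-\mathcal{L}_k(\phi_0)\to\frac{n!}{2V}(-\mathcal{E}_\omega(u))$; since $2kN_k\cdot\frac{n!}{2V}=k^{n+1}(1+O(1/k))$ by Riemann--Roch, this is exactly the claimed $k^{n+1}$ with leading constant $1$. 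The real content is to turn this pointwise-in-$u$ asymptotics into a \emph{one-sided}, uniform estimate
\[
\mathcal{L}_k(\psi)-\mathcal{L}_k(\phi_0)\ \le\ \frac{n!}{2V}\bigl(-\mathcal{E}_\omega(u)\bigr)+O(1/k)\qquad\text{for all }u\in\mathcal{H}_0 .
\]
I would obtain this from monotonicity in $k$: the normalized quantity $\mathcal{L}_k(\phi_0+u)-\mathcal{L}_k(\phi_0)$ is monotone in $k$ with limit $\frac{n!}{2V}(-\mathcal{E}_\omega(u))$, the monotonicity following from the plurisubharmonic variation of Bergman norms / positivity of direct image bundles (Berndtsson) applied to the family $\psi$ and the resulting submultiplicativity of the $\det G_{mk}$. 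This passage — from pointwise asymptotics to a uniform one-sided bound with explicit $O(1/k)$ error — is the main obstacle; Step 1 and the rest are soft.

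Combining the two steps gives $\log\int_X e^{-ku}\,dV\le(1+C_1/k)\,k^{n+1}(-\mathcal{E}_\omega(u))+(1+C_2/k)$ once $\log(N_k/c_k)$, the Riemann--Roch remainder, and the $O(1/k)$ of Step 2 are absorbed into $B$ — which is \ref{eq:m-t in theorem aubin intro}, noting that $-\mathcal{E}_\omega(u)\ge 0$ on $\mathcal{H}_0$ since $u\le 0$. Tracking $c_k$, $N_k$ and the remainder exhibits $C_1,C_2$ as invariants of $\omega$; and in the Fano/anti-canonical case a canonical choice of $\phi_0$ and $dV$ makes these constants depend only on $\dim X$.
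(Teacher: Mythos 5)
Your approach is genuinely different from the paper's, and Step 2 as written contains a gap that is not plausibly closed by the tools you invoke.

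The paper does not use $H^0(X,kL)$ and a global determinant comparison. It works with the adjoint Bergman kernel $K_\phi(x)$ of $H^0(X,K_X+kL)$, restricted to the diagonal, and exploits Berndtsson's theorem that $t\mapsto\log K_{\phi_t}(x)$ is convex along (sub)geodesics. The proof is then a one-line convexity argument: along the geodesic from $\phi_0$ to $\phi$, the derivative of $-\log K_{\phi_t}(x)$ at $t=0$ is bounded by $C_1\,\tfrac{d}{dt}|_{t=0}\mathcal{E}(\phi_t,\phi_0)=C_1\,\mathcal{E}(\phi,\phi_0)$ (using Cauchy--Schwarz on the off-diagonal kernel and the upper bound $K_{\phi_0}e^{-\phi_0}\le C_1\,\mathrm{MA}(\phi_0)$); concavity of $-\log K_{\phi_t}$ then transports this to $t=1$, giving the pointwise lower bound $K_\phi\ge K_{\phi_0}e^{-C_1\mathcal{E}(\phi,\phi_0)}$, which integrates against $e^{-\phi}$ via $\int K_\phi e^{-\phi}=N_k$. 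Your Step~1 plays the same role as the paper's use of the lower bound $C_2 dV\le K_{\phi_0}e^{-\phi_0}$ plus $\int K_\phi e^{-\phi}=N$, so that part is sound and closely parallel in spirit.

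The problem is Step~2. You need the one-sided, $u$-uniform estimate $\mathcal{L}_k(\psi)-\mathcal{L}_k(\phi_0)\le \tfrac{n!}{2V}(-\mathcal{E}_\omega(u))+O(1/k)$, and you propose to derive it from ``monotonicity in $k$'' of $\mathcal{L}_k(\phi_0+u)-\mathcal{L}_k(\phi_0)$, attributed to positivity of direct images and ``submultiplicativity of $\det G_{mk}$''. This is not a statement Berndtsson's theorem gives you: his positivity/convexity results concern the variation of $\log\det G_k$ (or of $\log K_\phi$) along a one-parameter family of weights $\phi_t$ at \emph{fixed} $k$, not a comparison between different tensor powers. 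No precise lemma asserting the claimed monotonicity in $k$ is stated or proved, and the normalization $\tfrac{1}{2kN_k}\log\det G_k$ also depends on the fixed background volume form $dV$ (unlike $\mathcal{E}_\omega$), which already makes a clean monotonicity suspicious. You flag this step yourself as ``the main obstacle''; it is indeed the heart of the matter, and the proposal does not resolve it. The paper's choice of the adjoint bundle is precisely what lets one replace the problematic uniform large-$k$ comparison by a finite-$k$ convexity-along-geodesics argument, with the $1+O(1/k)$ constants then falling out of the Tian/Bouche Bergman kernel expansion for $C_1$, $C_2$ and of Riemann--Roch for $N_k$.

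A secondary issue: for non-integer $k$ you reduce to $\lceil k\rceil$ using $u\le 0$, which is fine for the qualitative statement but does not by itself produce constants of the specific form $A=1+C_1/k$, $B=1+C_2/k$; in the paper these come from the Bergman asymptotics at integer level and the statement is really asymptotic in integer $k$.
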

The first part of the theorem establishes a conjecture of Aubin (called
{}``Hypothèse fondamentale'' in \cite{au}) under the assumption
that the class $[\omega]$ be integral. The inequalities \ref{thm:aubin hyp intro}
are equivalent to the existence of positive constants $c$ and $C$
such that \begin{equation}
\int_{X}e^{c\left(\frac{-u}{(-\mathcal{E}(u))^{1/(n+1)}}\right)^{(n+1)/n}}dV\leq C,\label{eq:trudingers ineq on kahler}\end{equation}
providing a variant of Trudinger's inequality \ref{eq:trudingers ineq}
in the Kähler setting. It appears to be new even in the case of two-dimensional
projective space. In particular we deduce the following Sobolev type
inequalities of independent interest: for any $u$ in $\mathcal{H}_{0}(X,\omega)$
\begin{equation}
\left\Vert u\right\Vert _{L^{p}(X)}^{n+1}\leq Cp^{n}(-\mathcal{E}_{\omega}(u))\label{eq:sob intro}\end{equation}
for all $p$ in $]1,\infty[,$ for some constant $C$ only depending
on $\omega.$

The starting point of the proof of the previous theorem is the basic
fact that, in the integral case when $[\omega]\in H^{2}(X,\Z),$ the
space $k\mathcal{H}_{0}(X,\omega)$ may be identified (mod $\R)$
with the space $\mathcal{H}(kL)$ of all positively curved metrics
on the $k$th tensor of an ample line bundle $L\rightarrow X$ with
Chern class $c_{1}(L)=[\omega].$ The proof then exploits convexity
properties along geodesics of certain functionals on the space $\mathcal{H}(L)$
equipped with the Mabuchi metric (see section \ref{sub:Out-line-of-the}
for an outline of the proof).

As pointed out above Aubin's main motivation for his conjecture came
from the existence problem for positively curved Kähler-Einstein metrics
on a Fano manifold where the Kähler class $[\omega]$ is the integral
class $c_{1}(-K_{X}),$ i.e. the first Chern class of the anti-canonical
line bundle $-K_{X}$ of $X.$ In this setting, which we will refer
to as the Fano setting, he also conjectured an explicit optimal value
for $A$ which only depends on the dimension $n$ of the Fano manifold.
However, as explained in section \ref{sec:Remarks-on-the} there is
a simple counter-example to the explicit value proposed by Aubin.
Still, combining our arguments with previous work on finiteness properties
of Fano manifolds \cite{t-y,kmm,ca} we deduce the following partial
confirmation of Aubin's latter conjecture:
\begin{thm}
\label{thm:uniform fano intro}When $X$ is an $n-$dimensional Fano
manifold and $[\omega]$ is the anti-canonical class the constant
$A$ can be taken to only depend on $n$ (if $B$ is allowed to depend
on $k).$
\end{thm}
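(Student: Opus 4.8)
The plan is to leverage Theorem \ref{thm:aubin hyp intro}, which already gives the inequality \eqref{eq:m-t in theorem aubin intro} with constant $A = 1 + C_1/k$ where $C_1$ is an invariant of $\omega$ attached to the Bergman kernels of $kL = -kK_X$. The point is therefore \emph{not} to reprove the inequality but to show that on a Fano manifold the relevant constant can be taken independent of $\omega$ in its class and, after letting $k\to\infty$, independent of $k$ as well. First I would recall that in the Fano/anti-canonical setting the constant $C_1$ controlling $A$ is governed by a uniform comparison between the fixed volume form $dV$ and the Bergman-type measures $\rho_k\,dV$ (the Bergman density function of $H^0(X,-kK_X)$ with respect to a metric whose curvature is $\omega_u$). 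The core claim is that this comparison is uniform over the whole space $\mathcal{H}_0(X,\omega)$ with a bound depending only on $n$, after which one lets the tensor power $k\to\infty$ and absorbs the $k$-dependence into $B$.

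The key steps, in order, are as follows. (1) Reduce to a statement about the Monge--Amp\`ere energy $\mathcal{E}$ and a functional of "mean-field" type: one rewrites $\log\int_X e^{-ku}\,dV$ in terms of the $k$-th Bergman point and uses the variational/geodesic-convexity machinery already developed in the proof of Theorem \ref{thm:aubin hyp intro}. (2) Invoke the finiteness results for Fano manifolds from \cite{t-y,kmm,ca}: the family of $n$-dimensional Fano manifolds is bounded, so that curvature and volume-noncollapsing bounds, together with a uniform Sobolev/Moser--Trudinger-type control for the \emph{background} geometry, hold with constants depending only on $n$. This is what removes the dependence of $C_1$ on the particular $\omega$. (3) Combine (1) and (2): the inequality holds with $A = 1 + C_1(n)/k$, and then take $k \to \infty$ (equivalently optimize over $k$), so $A \to 1$ can be replaced by a dimensional constant, at the cost of letting $B$ depend on $k$ (and on $n$). (4) Check that the normalization $\sup_X u = 0$ and the choice of $dV$ do not reintroduce uncontrolled constants, using the fact that on a Fano manifold $dV$ can be taken to be (comparable to) the Monge--Amp\`ere volume of $\omega$ itself with uniform constants.

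The main obstacle I expect is step (2): making precise which "finiteness property" of Fano manifolds is actually needed and that it supplies a constant depending \emph{only} on $n$. The cited works \cite{t-y,kmm,ca} give boundedness of the Fano family (Matsusaka-type effective bounds, or $K$-energy/$\alpha$-invariant positivity arguments), but translating this into a uniform estimate for the Bergman invariant $C_1$ along \emph{all} of $\mathcal{H}_0(X,\omega)$ — not just for a fixed $\omega$ — requires a uniform partial $C^0$-estimate or a uniform bound for the Bergman density function $\rho_k$ that is stable under varying the metric within the anti-canonical class. I would handle this by combining the uniform Moser--Trudinger inequality of Theorem \ref{thm:aubin hyp intro} itself (applied on the bounded family) with a bootstrap: the inequality with a possibly bad constant forces an integrability bound on $e^{-ku}$, which in turn feeds back (via Hörmander-type $L^2$-estimates for $\bar\partial$ on $-kK_X$, using $e^{-ku}$ itself as weight) into a uniform lower bound on $\rho_k$, closing the loop. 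A secondary, more technical point is interchanging the limit $k\to\infty$ with the supremum over $u$, which I would address by first fixing $k$, obtaining the clean constant $1 + C_1(n)/k$, and only then choosing $k = k(n)$ large enough that $1 + C_1(n)/k \le A(n)$ for the desired dimensional $A(n)$, which is legitimate precisely because $B$ is now permitted to depend on $k$.
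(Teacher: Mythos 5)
Your proposal correctly pins down the right references (\cite{t-y,kmm,ca}) and the broad goal of turning the Bergman-kernel comparison constant into a dimensional quantity, but the mechanism you describe does not match the paper's and, more importantly, contains a misreading of which estimate actually has to be uniform. In Theorem \ref{thm:main in text} the only quantity that enters the multiplicative constant $A$ is the \emph{upper} bound $C_1$ in $K_{\phi_0}e^{-\phi_0}\le C_1(dd^c\phi_0)^n/n!$, and this inequality only concerns the \emph{fixed reference metric} $\phi_0$, not the varying $\phi=\phi_0+u$. You write that the comparison must be ``uniform over the whole space $\mathcal{H}_0(X,\omega)$'' and that this requires a partial $C^0$-estimate or a uniform Bergman density bound stable under varying the metric within $c_1(-K_X)$; neither is needed. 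This matters because the bootstrap you propose (feeding an integrability bound on $e^{-ku}$ back into a Bergman density estimate via H\"ormander $L^2$-estimates) produces a \emph{lower} bound on the Bergman kernel, i.e. the constant $C_2$, which only affects the additive constant $B$ — and $B$ is explicitly allowed to depend on $k$ and the manifold. So the bootstrap aims at the wrong estimate, and the hard step you flag as the ``main obstacle'' is not in fact required.

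The paper's actual argument is more elementary and direct. One fixes a reference $\phi_0$ whose curvature $\omega_0$ satisfies a lower Ricci bound $\mathrm{Ric}\,\omega_0\ge\delta\omega_0$ (Tian--Yau show such a choice exists with $1/\delta$ controlled by an upper bound on the anticanonical volume $V$, and Campana and Koll\'ar--Miyaoka--Mori give a universal upper bound $V\le c_n$). Then, for any holomorphic section $f_k\in H^0(kL+K_X)$, the function $H:=|f_k|^2e^{-k\phi_0}/((dd^c\phi_0)^n/n!)$ satisfies $\Delta_g H\ge -n(k-\delta)H$, and Moser iteration combined with Ilias's explicit Sobolev constants (which depend only on $\delta$, $V$, and $n$) gives $\|H\|_{L^\infty}\le C_\delta k^n\|H\|_{L^1}$. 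By the extremal characterization of the Bergman kernel this yields $K_{k\phi_0}\le C_\delta k^n(dd^c\phi_0)^n/n!$, with $C_\delta$ depending only on $n$. This estimate already carries the factor $k^n$ — there is no ``$k\to\infty$'' limit to take or optimize over; the constant is dimensional for all $k$ simultaneously. Plugging into Theorem \ref{thm:main in text} finishes the proof, with the $k$-dependent and manifold-dependent pieces absorbed into $B$ as permitted. In short: you need a missing ingredient (the Moser-iteration route from a Ricci lower bound to a pointwise Bergman kernel \emph{upper} bound for the \emph{reference} metric), and you can discard the proposed bootstrap, which is both unnecessary and aimed at the lower-bound constant.
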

Coming back to the general setting in Theorem \ref{thm:aubin hyp intro}
we point out that the Moser-Trudinger type inequalities there will
be shown to hold as long as $\omega=c_{1}(L)$ for $L$ semi-positive
and such that the adjoint bundles $kL+K_{X}$ are base point free
for all sufficiently large positive integers $k.$ Using this and
$L^{2}-$estimates for $\bar{\partial}$ we deduce the following {}``qualitative''
Moser-Trudinger type inequality in a degenerate setting:
\begin{cor}
\label{cor:no lelong intro}Let $\omega$ be a semi-positive form
on compact complex manifold $X$ such that $[\omega]=c_{1}(L)$ with
$L$ semi-positive and big ( i.e. $\int_{X}c(L)^{n}>0).$ Then $u$
has vanishing Lelong numbers, i.e. any negative multiple of an $\omega-$psh
function $u$ with finite energy is exponentially integrable. More
precisely, if $\mathcal{E}_{\omega}(u)\geq-C$ and $\sup_{X}u=0$
then \[
\int_{X}e^{-ku}dV\leq C_{k},\]
 where the constant $C_{k}$ only depends on $C$ and $k.$
\end{cor}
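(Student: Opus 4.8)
The plan is to derive this from the extension of Theorem~\ref{thm:aubin hyp intro} mentioned above---that the Moser--Trudinger inequality \ref{eq:m-t in theorem aubin intro} holds on $\mathcal{H}_{0}(X,\omega)$ as soon as $\omega=c_{1}(L)$ with $L$ semi-positive and the adjoint bundles $kL+K_{X}$ base point free for all large $k$---and then to pass from smooth K\"ahler potentials to general finite-energy potentials by regularization. So the first point to settle is that, under the additional hypothesis that $L$ is big, the bundles $kL+K_{X}$ are indeed globally generated for all $k\gg 0$.

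For this I would use $L^{2}$-estimates for $\bar\partial$. Since $L$ is big, it carries a singular Hermitian metric whose curvature current is a K\"ahler current, i.e.\ dominates $\varepsilon\omega_{0}$ for a fixed K\"ahler form $\omega_{0}$ and some $\varepsilon>0$. Fixing $x\in X$, one modifies the weight of this metric so that it acquires a logarithmic pole at $x$ large enough that the associated multiplier ideal sheaf $\mathcal{I}$ isolates the point $x$, while the curvature remains strictly positive; then Nadel vanishing gives $H^{1}(X,\mathcal{O}(K_{X}+mL)\otimes\mathcal{I})=0$ for a suitable fixed $m$, and twisting by the semi-positive factor $(k-m)L$ preserves this vanishing. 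Hence $H^{0}(X,\mathcal{O}(K_{X}+kL))$ surjects onto the fibre at $x$, so $K_{X}+kL$ has no base point at $x$; an upper semicontinuity and compactness argument then makes the bound on $k$ uniform over $x\in X$. This is the one step where bigness (and the $L^{2}$-machinery) is needed, and I expect it to be the main technical obstacle---in particular making a single $k_{0}$ work simultaneously for all points, with $L$ only semi-positive rather than ample.

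Granting the Moser--Trudinger inequality for smooth potentials, the last step is an approximation. Let $u$ be $\omega$-psh with $\sup_{X}u=0$ and $\mathcal{E}_{\omega}(u)\geq -C$. A regularization of $u$ (Demailly's construction, carried out in the big class $c_{1}(L)$, or after replacing $\omega$ by a slightly larger K\"ahler form and letting the perturbation tend to $0$) provides a sequence $u_{j}\in\mathcal{H}_{0}(X,\omega)$ decreasing to $u$, and continuity of the Monge--Amp\`ere energy along decreasing sequences yields $\mathcal{E}_{\omega}(u_{j})\geq -C'$ with $C'$ depending only on $C$. Applying the inequality to each $u_{j}$ and exponentiating gives $\int_{X}e^{-ku_{j}}\,dV\leq\exp(Ak^{n+1}C'+B)=:C_{k}$, and since $u_{j}\downarrow u$ forces $e^{-ku_{j}}\uparrow e^{-ku}$, monotone convergence yields $\int_{X}e^{-ku}\,dV\leq C_{k}$ for all large $k$; the remaining $k$ are immediate from $-ku\leq -k_{0}u$, valid because $u\leq 0$. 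Finally, exponential integrability of $-ku$ for every $k>0$ is equivalent, via Skoda's theorem, to the vanishing of all Lelong numbers of $u$, which is the qualitative statement of the corollary.
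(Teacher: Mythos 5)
Your plan hinges on the claim that, for $L$ semi-positive and big, the adjoint bundles $kL+K_{X}$ become globally generated for all $k\gg 0$. This is \emph{false} in general, and the paper's proof of the corollary is structured precisely to avoid this claim. A concrete counterexample: let $X=\mathrm{Bl}_{p}\P^{2}$ with exceptional curve $E$ and $L=\pi^{*}\mathcal{O}_{\P^{2}}(1)$, so $L$ is semi-positive and big. Then $K_{X}+kL=(k-3)L+E$, and the restriction sequence together with $[(k-3)L+E]\big|_{E}=\mathcal{O}_{E}(-1)$ shows that multiplication by $s_{E}$ gives an isomorphism $H^{0}(X,(k-3)L)\cong H^{0}(X,K_{X}+kL)$; hence \emph{every} section of $K_{X}+kL$ vanishes identically along $E$, for every $k$. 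So the Bergman kernel $K_{k\phi_{0}}$ vanishes on $E$, the constant $C_{2}$ in \eqref{eq:upper lower bounds on bergman} is $0$, and Theorem \ref{thm:main in text} cannot be applied directly as you propose.

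The Nadel-vanishing argument you sketch breaks down for exactly this reason, and it is worth seeing where. For $x$ in the non-ample locus $\mathbb{B}_{+}(L)$ (e.g.\ $x\in E$ above), you cannot produce a singular metric $\psi$ on $kL$ with K\"ahler-current curvature, with multiplier ideal $\mathcal{I}(\psi)$ cosupported at $x$, and with $\mathcal{I}(\psi)_{x}\subset\mathfrak{m}_{x}$. Indeed, write $\psi=\alpha\log|s_{E}|^{2}+v$ with $\alpha=\nu(\psi,E)$ the generic Lelong number along $E$; triviality of $\mathcal{I}(\psi)$ generically along $E$ forces $\alpha<1$. The function $v$ is (locally) a weight on $kL-\alpha E$, and $dd^{c}v\big|_{E}$ is a positive current on $E$ of total mass $(kL-\alpha E)\cdot E=\alpha<1$. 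But to have $\mathcal{I}(\psi)_{x}\subset\mathfrak{m}_{x}$ in dimension $n$ you need $\psi$ (hence $v$) to have Lelong number $\geq n$ at $x$, and since $E$ passes through $x$ this forces $dd^{c}v\big|_{E}$ to carry a Dirac mass $\geq n$ at $x$ --- impossible inside total mass $\alpha<1$. The cutoff constant $C_{\chi}$ in your K\"ahler-current estimate therefore cannot be made smaller than the available positivity in directions tangent to $\mathbb{B}_{+}(L)$, no matter how the cutoff or the reference form is chosen; the obstruction is cohomological, not technical.

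The paper gets around this precisely by \emph{not} proving base point freeness. Instead, after Kodaira's lemma $k_{0}L=A+E$, it uses Ohsawa--Takegoshi--Manivel extension to obtain only the weaker lower bound $|s_{E}|^{2}e^{-\phi_{E}}/C'\leq K_{k\phi_{0}}e^{-k\phi_{0}}$ (inequality \eqref{eq:lower bd on sing berman ker}), where the right-hand side is allowed to vanish along the divisor $E$. Combining with the crux Bergman-kernel estimate and the identity $\int_{X}K_{k\phi}e^{-k\phi}=N_{k}$ yields a bound on $\int_{X}|s_{E}|^{2}e^{\phi_{0}-\phi_{E}}e^{-k\phi}$, not on $\int_{X}e^{-k\phi}$ itself. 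One then derives the vanishing of Lelong numbers from a local blow-up computation (choosing $k$ so large that $kl>m+n-1$, with $m$ the order of vanishing of $s_{E}$ along the exceptional divisor), and obtains the uniform bound $C_{k}$ from compactness of $\{\mathcal{E}_{\omega}\geq -C\}\cap\{\sup_{X}u=0\}$ together with Zeriahi's uniform Skoda theorem or Demailly--Koll\'ar semi-continuity. Your final approximation/monotone-convergence paragraph is sound, but it is not the missing ingredient --- the missing ingredient is that you must work with a Bergman-kernel lower bound that degenerates on the non-ample locus, and the conclusion is correspondingly indirect.
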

The notion of finite energy is recalled in the beginning of section
\ref{sub:Vanishing-Lelong-numbers}. In the strictly positive case
the previous corollary is due to Guedj-Zeriahi \cite{g-z} and proved
using pluripotential theory. The present extension to the semi-positive
case was motivated by the work \cite{bbegz} where it is used in the
construction of Kähler-Einstein metrics on singular Fano varieties.

\subsubsection{The setting of a pseudoconvex domain in $\C^{n}$ }

Let now $\Omega$ be a pseudoconvex domain in $\C^{n}$ with smooth
boundary (for example the unitball) and set $\omega:=0.$ In this
setting we let $\mathcal{H}_{0}(\Omega)$ be the convex cone of all
smooth plurisubharmonic functions, i.e. $dd^{c}u\geq0,$ vanishing
on the boundary $\partial\Omega.$ Then the $n+1-$homogeneous functional
\begin{equation}
n!\mathcal{E}_{0}(u)=\frac{1}{(n+1)}\int_{\Omega}u(dd^{c}u)^{n}\label{eq:energy on domain}\end{equation}
is the usual generalization to $\C^{n}$ of (minus) the squared Dirichlet
norm in the unitdisc. In the paper \cite{au} Aubin claims that the
conjectured inequality holds in the setting of the unit-ball in $\C^{n},$
but it appears that he only proved proved this under radial symmetry
(\cite{au2}, Cor 8.3 in) and in fact with a non-optimal constant
(as explained in section \ref{sec:Remarks-on-the}). Assuming only
circular symmetry, i.e. invariance under the diagonal $S^{1}-$action
on $\C^{n},$ our method of proof of Theorem \ref{thm:aubin hyp intro}
also yields the following generalization of Moser's inequality on
the disc:
\begin{thm}
\label{thm:sharp m-t in ball under inv intro}The following Moser-Trudinger
inequality holds for any $S^{1}-$invariant function in $\mathcal{H}_{0}(\mathcal{B}),$
where $\mathcal{\mathcal{B}}$ is the unit-ball in $\C^{n}:$ \begin{equation}
\log\int_{\mathcal{B}}e^{-u}dV\leq\frac{1}{(n+1)^{(n+1)}}\int_{\Omega}(-u)(dd^{c}u)^{n}+C_{n}\label{eq:m-t in thm sharp intro}\end{equation}
for a constant $C_{n}.$ Moreover the multiplicative constant in the
inequality is sharp. 
\end{thm}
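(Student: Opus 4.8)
The plan is to use the $S^{1}$-invariance to reduce the statement to a sharp \emph{one-dimensional} inequality of Moser type, in the spirit of the proof of Theorem~\ref{thm:aubin hyp intro}, and then to verify sharpness on an explicit extremal family. First I would pass to the radial variable $t=\log|z|^{2}\in(-\infty,0]$. For a radial $u\in\mathcal{H}_{0}(\mathcal{B})$, writing $u=h(t)$, one has that $u$ is plurisubharmonic and vanishes on $\partial\mathcal{B}$ precisely when $h$ is convex, non-decreasing, $h\le 0$ and $h(0)=0$; a direct computation together with one integration by parts (using $h(0)=0$ and, for finite energy, $h'(-\infty)=0$) gives
\[\int_{\mathcal{B}}e^{-u}\,dV=\kappa_{n}\int_{-\infty}^{0}e^{-h(t)}e^{nt}\,dt,\qquad \int_{\mathcal{B}}(-u)(dd^{c}u)^{n}=\int_{-\infty}^{0}h'(t)^{n+1}\,dt,\]
the constant $\kappa_{n}$ being absorbed into the additive constant. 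A general $S^{1}$-invariant $u$ is reduced to the radial case either by a symmetrisation or — this being closer to the method of Theorem~\ref{thm:aubin hyp intro} — by applying Berndtsson's theorem on the plurisubharmonic variation of $-\log\int e^{-\varphi}$ to the $S^{1}$-invariant family $z\mapsto u(\lambda z)$, $\lambda\in\mathbb{C}^{*}$. Thus it suffices to prove
\[\log\int_{-\infty}^{0}e^{-h(t)}e^{nt}\,dt\le\frac{1}{(n+1)^{n+1}}\int_{-\infty}^{0}h'(t)^{n+1}\,dt+C_{n}\]
for every convex non-decreasing $h\le 0$ on $(-\infty,0]$ with $h(0)=0$ and $h'(-\infty)=0$.

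For this one-dimensional inequality the decisive elementary fact is that $\tfrac{a^{n+1}}{(n+1)^{n+1}}\ge a-n$ for all $a>0$, with equality exactly at $a=n+1$, since $a\mapsto a^{n+1}/(n+1)^{n+1}-a$ is convex with minimum $-n$ at $a=n+1$. Testing the inequality on the piecewise linear competitors $h_{b}(t)=\max(at,ab)$, i.e.\ $u=\max(a\log|z|^{2},ab)$, one finds that the difference of the two sides equals $|b|\bigl(a-n-a^{n+1}/(n+1)^{n+1}\bigr)+O(1)$ as $b\to-\infty$; hence it holds for these, and saturation forces $a=n+1$, which singles out the extremals $h(t)=(n+1)\log\frac{\epsilon^{2}+e^{t}}{\epsilon^{2}+1}$. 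The general case then follows by a convexity/cutting argument — approximating the non-decreasing function $h'$ by step functions and estimating term by term, the worst configuration being the one just found — or, equivalently, by running the associated calculus of variations, whose Euler--Lagrange equation is solved by this bubble family.

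Finally, for sharpness I would insert the radial functions $u_{\epsilon}(z):=(n+1)\log\frac{\epsilon^{2}+|z|^{2}}{\epsilon^{2}+1}$, which lie in $\mathcal{H}_{0}(\mathcal{B})$ — plurisubharmonic, being pulled back from a Fubini--Study potential under $z\mapsto(z,\epsilon)$, and vanishing on $\partial\mathcal{B}$ — and let $\epsilon\to0$. Since $(dd^{c}u_{\epsilon})^{n}=(n+1)^{n}(dd^{c}\log(\epsilon^{2}+|z|^{2}))^{n}$ has total mass $(n+1)^{n}$ and concentrates at the origin, where $-u_{\epsilon}\sim-2(n+1)\log\epsilon$, one gets
\[\log\int_{\mathcal{B}}e^{-u_{\epsilon}}\,dV=-2\log\epsilon+O(1),\qquad \int_{\mathcal{B}}(-u_{\epsilon})(dd^{c}u_{\epsilon})^{n}=-2(n+1)^{n+1}\log\epsilon+O(1),\]
so any constant $c$ for which the inequality holds must satisfy $-2\log\epsilon\le-2c(n+1)^{n+1}\log\epsilon+O(1)$ for all small $\epsilon$, forcing $c\ge(n+1)^{-(n+1)}$. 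The hard part is the reduction in the first step: for non-radial $S^{1}$-invariant $u$ one must show that $\int_{\mathcal{B}}(-u)(dd^{c}u)^{n}$ still dominates $\int h'^{n+1}\,dt$ \emph{with the exact constant} while $\int_{\mathcal{B}}e^{-u}\,dV$ is controlled by $\kappa_{n}\int e^{-h}e^{nt}\,dt$, and it is precisely here that $S^{1}$-invariance and the positivity of direct images for the fibration $\mathcal{B}\setminus\{0\}\to\mathbb{P}^{n-1}$ are used; unlike the soft estimates behind Theorem~\ref{thm:aubin hyp intro}, this step is rigid, which is why the sharp constant survives.
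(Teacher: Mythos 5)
Your proposal takes a genuinely different route from the paper's, and it contains two real gaps. The paper does not reduce to the radial case at all: it works directly with the Bergman kernel $K_{\phi}(0)$ of the weighted space of holomorphic functions on $\mathcal{B}$, exploiting the identity $K_{\phi}(0,\zeta)=1/\int_{\mathcal{B}}e^{-\phi}$ which holds for \emph{all} $S^{1}$-invariant $\phi$ (Proposition \ref{pro:bergman kernel under s one inv}), not merely radial ones. It then combines the geodesic convexity of $t\mapsto\log K_{\phi_{t}}$ (Berndtsson's positivity of direct images) with the fact that the rescaled Fubini--Study reference $\phi_{0}^{\epsilon}$ satisfies a K\"ahler--Einstein equation, obtaining the Bergman-kernel estimate in Proposition \ref{pro:m-t for bergman kernel in ball}, and finally optimizes the constant by letting $\epsilon\to0$. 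There is no symmetrization and no one-dimensional reduction anywhere in the argument.

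The first gap in your proposal is precisely the reduction of a general $S^{1}$-invariant $u$ to a radial $h(\log|z|^{2})$ \emph{with the exact constant preserved on both sides}. You acknowledge this is ``the hard part'' but do not carry it out, and the suggested routes (applying the psh variation theorem to $z\mapsto u(\lambda z)$, or invoking positivity of direct images for $\mathcal{B}\setminus\{0\}\to\mathbb{P}^{n-1}$) are not what Berndtsson's theorem delivers here and are left entirely unspecified. A naive averaging over $U(n)$ fails: by Jensen the averaged function has \emph{smaller} $\int e^{-u}$, and by concavity of $\mathcal{E}$ it also has smaller $\int(-u)(dd^{c}u)^{n}$, so both sides of the inequality decrease and the implication goes the wrong way. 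The authors themselves flag symmetrization in higher dimensions as an open direction. The second gap is the one-dimensional inequality: your piecewise-linear test is correct and nicely isolates why $(n+1)^{-(n+1)}$ is critical (via $a^{n+1}/(n+1)^{n+1}\ge a-n$), and the reduction formulas, including $\int_{\mathcal{B}}(-u)(dd^{c}u)^{n}=\int_{-\infty}^{0}(h')^{n+1}\,dt$ in the paper's normalization of $dd^{c}$, are right; but ``approximating $h'$ by step functions and estimating term by term'' is not a proof. The obvious H\"older bound $-h(t)\le|t|^{n/(n+1)}\bigl(\int(h')^{n+1}\bigr)^{1/(n+1)}$ followed by Laplace's method gives the correct leading exponent but also a parasitic polynomial factor in the energy, hence only the quasi-sharp statement; removing it requires a genuine Moser-type argument using the convexity of $h$, which you do not supply. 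Your sharpness computation with the bubble family $u_{\epsilon}=(n+1)\log\frac{\epsilon^{2}+|z|^{2}}{\epsilon^{2}+1}$ is correct and coincides with the extremal family used in the paper.
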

Note that the sharp constant in \ref{eq:m-t in thm sharp intro} coincides
with the well-known one in the Fano setting when $X=\P^{n}$ and $k=1$
(and our proof shows that this is no coincidence). We conjecture that
the symmetry assumption in the previous theorem may be removed. In
this direction we will prove the following quasi-sharp Moser-Trudinger
inequality for a general pseudoconvex domain (or more generally a
hyperconvex one):
\begin{thm}
\label{thm:quasi-sharp m-t in ball intro}Let $\Omega$ be a pseudoconvex
domain in $\C^{n}$ with smooth boundary. Then, for any $\delta>0$
there is a constant $C$ such

\begin{equation}
\log\int_{\Omega}e^{-u}dV\leq\frac{1+\delta}{(n+1)^{(n+1)}}\int_{\Omega}(-u)(dd^{c}u)^{n}-(n-1)\log\delta+C_{n}\label{eq:quasi-sharp m-t intro}\end{equation}
 for any function $u$ in $\mathcal{H}_{0}(\Omega).$ Moreover, for
any domain $\Omega$ the limiting multiplicative constant $\frac{1}{(n+1)^{(n+1)}}$
is sharp. In particular, for any $\delta>0$ there is a constant $C_{\delta}$
such that \textup{\[
\int_{\Omega}e^{(1-\delta)n(-u)^{(n+1)/n}}dV\leq C_{\delta}\]
for any }$u$ in $\mathcal{H}_{0}(\Omega)$ such that $\int_{\Omega}(-u)(dd^{c}u)^{n}=1.$ 
\end{thm}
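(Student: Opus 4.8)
The plan is to reduce the inequality on $\Omega$ to the already-established sharp inequality on the ball for $S^1$-invariant functions (Theorem \ref{thm:sharp m-t in ball under inv intro}) by a symmetrization/comparison argument, paying for the loss of symmetry with the factor $1+\delta$. First I would fix $u\in\mathcal H_0(\Omega)$ and, after translating so that $0\in\Omega$ and rescaling, arrange that $\Omega\subset\mathcal B$. Since $u\le 0$ vanishes on $\partial\Omega$, extending $u$ by $0$ to $\mathcal B$ produces a plurisubharmonic function on $\mathcal B$ whose Monge--Amp\`ere mass has not increased; thus it suffices to prove the inequality on the ball itself. The core step is then to pass from an arbitrary $u\in\mathcal H_0(\mathcal B)$ to an $S^1$-invariant competitor $\tilde u$. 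Rather than a naive average (which need not be plurisubharmonic in the right way, nor decrease the energy), I would use the Monge--Amp\`ere energy $\mathcal E_0$ and the fact that among functions with a given boundary behaviour and a given sublevel-set structure the radial one minimizes energy; more concretelyori, one slices $u$ over the circles of the $S^1$-action and uses the concavity of $\mathcal E_0$ along the Mabuchi-type geodesics connecting $u$ to its rotations (exactly the convexity mechanism invoked for Theorem \ref{thm:aubin hyp intro}), together with the obvious fact that $\int e^{-u}\,dV$ is unchanged under rotation and the geodesic/average only increases it by Jensen. The $1+\delta$ and the $-(n-1)\log\delta$ term will come out of this comparison: losing exact symmetry forces one to absorb an error term of the shape $\delta^{-1}\cdot(\text{stuff})^{1/n}\cdots$ into the energy by Young's inequality, and tracking the constants in an $n$-dimensional Young/H\"older split produces precisely the logarithmic correction $-(n-1)\log\delta$.

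For the sharpness statement I would exhibit an explicit family of test functions and compute both sides. On the ball the natural candidates are (regularized) radial functions modelled on $u_a(z)=\max(\log|z|,a)-\,a$ scaled appropriately, or equivalently $u_\varepsilon(z) \sim (1+\varepsilon)^{-1}\bigl((\,|z|^{2}\,)^{?}-1\bigr)$-type profiles degenerating to a Green-function-like singularity at the origin; for these one knows $\int(-u)(dd^c u)^n$ and $\int e^{-u}\,dV$ from the one-variable computation underlying Moser's sharp constant, and letting the concentration parameter tend to its critical value shows the ratio $\log\int e^{-u}\,dV \big/ \int(-u)(dd^c u)^n$ approaches $(n+1)^{-(n+1)}$. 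Because these extremals are already $S^1$-invariant and can be placed inside any given domain $\Omega$ (translate and scale so a small ball around the concentration point lies in $\Omega$, extend by a bounded function), the same family shows $(n+1)^{-(n+1)}$ is sharp for every pseudoconvex $\Omega$, not just the ball. Finally, the displayed Trudinger-type consequence $\int_\Omega e^{(1-\delta)n(-u)^{(n+1)/n}}\,dV\le C_\delta$ under $\int_\Omega(-u)(dd^c u)^n=1$ follows from \eqref{eq:quasi-sharp m-t intro} by the standard Moser trick: expand $e^{t(-u)^{(n+1)/n}}$ in a power series, bound the $m$-th term using \eqref{eq:quasi-sharp m-t intro} applied to $cu$ for the optimal scaling $c=c(m)$, optimize $\delta$ as a function of $m$, and sum the resulting convergent series — the $-(n-1)\log\delta$ term is exactly what makes the sum converge up to the critical exponent.

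The main obstacle I anticipate is the symmetrization step: making rigorous the claim that one may replace a general $u\in\mathcal H_0(\mathcal B)$ by an $S^1$-invariant one without increasing the Monge--Amp\`ere energy by more than the controlled factor $1+\delta$ (plus the logarithmic term). Naive rotational averaging does not obviously decrease $\int(-u)(dd^c u)^n$ for $n>1$ because of the nonlinearity of the operator; the safe route is to run the geodesic-convexity argument of Theorem \ref{thm:aubin hyp intro} directly in the $S^1$-equivariant setting — i.e. compare the free energy $\mathcal E_0(u)+\log\int e^{-u}$ along the geodesic joining $u$ to its $S^1$-orbit average inside the space of metrics — and this is where the bulk of the technical work, and the provenance of the constants, will lie.
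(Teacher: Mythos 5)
Your plan hinges on a symmetrization step that is precisely the open problem, not a solvable technicality. You propose to pass from a general $u\in\mathcal H_0(\mathcal B)$ to an $S^1$-invariant competitor by rotational averaging (or by a geodesic argument in the $S^1$-equivariant setting) and then invoke Theorem \ref{thm:sharp m-t in ball under inv intro}. But you yourself note that naive averaging need not decrease $\int(-u)(dd^cu)^n$ for $n>1$, and you offer no mechanism by which the geodesic-convexity argument would deliver a controlled $(1+\delta)$-loss with a $\delta^{-(n-1)}$ correction. The paper explicitly states that it does \emph{not} know whether the sharp inequality holds without $S^1$-invariance for $n>1$, and that a symmetrization argument works only for $n=1$; for that reason it takes a completely different route to the quasi-sharp statement. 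Your ``main obstacle'' paragraph is not a detail to be filled in: it is exactly the reason this approach is abandoned in the paper, and you have not supplied the missing idea.

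The paper's actual argument is an induction on dimension through the Brezis--Merle inequality, not through symmetrization. Proposition \ref{pro: m-t impl demailly } shows that the (quasi-)sharp M-T inequality in $\C^n$ implies the (quasi-)sharp B-M inequality in $\C^{n+1}$, by slicing $u$ along a disc factor, applying the $n$-dimensional M-T to each slice, noting that $t\mapsto\mathcal E(u(t,\cdot))$ is subharmonic (the singular version of \ref{eq:second deriv of energy}), and applying the one-dimensional B-M to that subharmonic function; the reduction to a product and then to an arbitrary hyperconvex domain uses the subextension theorem of Cegrell--Zeriahi, not a trivial extension by zero. Proposition \ref{pro:Demailly imples m-t} then closes the induction: the quasi-sharp B-M in dimension $n$ implies the quasi-sharp M-T in the \emph{same} dimension, via the thermodynamical/Legendre-duality formalism, i.e. bounding the free energy $F_\gamma = E - \gamma^{-1}D$ using the $\alpha$-invariant of $(\Omega,dV)$ (Lemma \ref{lem:bounded using alpha}) and then transferring the bound to $\mathcal G_\gamma=\mathcal E-\mathcal L_\gamma$ by concavity of $\mathcal E$ together with \ref{eq:entropy as legendre tr}. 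Starting from Moser's sharp inequality for the disc, these two propositions alternate to give quasi-sharp M-T and B-M in every dimension, with the $-(n-1)\log\delta$ term accumulating one factor of $\delta^{-1}$ per inductive step.

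The remaining parts of your proposal are essentially right and do match the paper: the sharpness of $(n+1)^{-(n+1)}$ is obtained from the family $u_t=t\log|z|^2$ (Remark \ref{rem:no better constants}), and the exponential-integrability consequence is obtained by the standard Legendre/power-series conversion (Section \ref{sec:Remarks-on-the} uses Lemma \ref{lem: laplace est} and the homogeneous Legendre transform \ref{eq:legendre transforms for homo}). But the central inequality \ref{eq:quasi-sharp m-t intro} is not reached by your route, so as written the proposal has a genuine gap at its core.
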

The proof of the latter theorem is completely different than the previous
one. The starting point is the observation that if the sharp Moser-Trudinger
inequality holds in dimension $n-1$ then so does the following sharp
\emph{Brezis-Merle type inequality:} \begin{equation}
\int_{\Omega}e^{-u}dV\leq A\left(1-\frac{1}{n^{n}}\mathcal{M}(u)\right)^{-1}\label{eq:sharp b-m intro}\end{equation}
for any $u$ in $\mathcal{H}_{0}(\Omega)$ such that $\mathcal{M}(u)^{1/n}<n,$
where $\mathcal{M}(u)$ is the total Monge-Ampère mass of $u:$ \begin{equation}
\mathcal{M}(u):=\int_{\Omega}(dd^{c}u)^{n}\label{eq:ma mass}\end{equation}
(see \cite{br-m} for the case when $n=1$ and its relation to blow-up
analysis of PDEs). We then show that, conversely a quasi-sharp version
of the Brezis-Merle inequality in dimension $n$ implies the quasi-sharp
Moser-Trudinger inequality above in the same dimension $n$ and Theorem
\ref{thm:quasi-sharp m-t in ball intro} then follows directly from
induction over $n.$ More precisely, the induction argument gives
the following quasi-sharp version of the conjectural Brezis-Merle
type inequality above.
\begin{thm}
\label{thm:quasi-sharp b-m intro}Let $\Omega$ be a pseudoconvex
domain in $\C^{n}$ with smooth boundary, where $n>1.$ Then there
is a constant $A$ such

\begin{equation}
\int_{\Omega}e^{-u}dV\leq A\left(1-\frac{1}{n^{n}}\mathcal{M}(u)\right)^{-(n-1)}\label{eq:quasi-sharp m-t intro-1}\end{equation}
 for any function in $\mathcal{H}_{0}(\Omega)$ such that $\mathcal{M}(u)^{1/n}<n.$
\end{thm}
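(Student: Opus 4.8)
The plan is to run the induction over $n$ described in the paragraph preceding the statement, using the equivalence (quasi-sharp Moser-Trudinger in dimension $n-1$) $\Longrightarrow$ (sharp Brezis-Merle in dimension $n$) $\Longrightarrow$ (quasi-sharp Moser-Trudinger in dimension $n$). So I fix $n>1$, assume as inductive hypothesis that the quasi-sharp Moser-Trudinger inequality \eqref{eq:quasi-sharp m-t intro} holds in dimension $n-1$ (the base case $n=1$ being Moser's classical inequality on the disc, which is sharp), and aim to produce the bound $\int_\Omega e^{-u}\,dV \le A(1-\tfrac{1}{n^n}\mathcal M(u))^{-(n-1)}$.

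First I would set up the slicing that converts the $n$-dimensional problem into an $(n-1)$-dimensional one. Write $\C^n = \C_{z_1}\times\C^{n-1}_{z'}$ and, for each fixed $z_1$, restrict $u$ to the slice $\Omega_{z_1} = \{z' : (z_1,z')\in\Omega\}$; this restriction is plurisubharmonic on the pseudoconvex domain $\Omega_{z_1}\subset\C^{n-1}$ and vanishes on $\partial\Omega_{z_1}$, so it lies in $\mathcal H_0(\Omega_{z_1})$ after the obvious normalization. Integrating $e^{-u}$ first in $z'$ and applying the dimension-$(n-1)$ quasi-sharp Moser-Trudinger inequality to each slice, I get a bound on $\int_{\Omega_{z_1}} e^{-u}\,dV(z')$ in terms of $\exp\big(\tfrac{1+\delta}{n^n}\int_{\Omega_{z_1}}(-u)(dd^c u)^{n-1} - (n-2)\log\delta + C_{n-1}\big)$ — here I am using that $\dim_{\C}\Omega_{z_1} = n-1$ so the relevant sharp constant is $1/n^n$ and the power of $\log\delta$ is $n-2$. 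The key monotonicity input is that $\int_{\Omega_{z_1}}(-u)(dd^c u)^{n-1}$ is controlled by the full mixed mass; one wants an inequality of the form $\int_{\Omega_{z_1}}(-u)(dd^c_{z'}u)^{n-1} \le$ (something integrable in $z_1$ bounded by the total Monge-Ampère mass $\mathcal M(u)$), which is exactly where the comparison/capacity estimates for the complex Monge-Ampère operator enter. Then I integrate the resulting exponential over $z_1\in\C$, using a one-dimensional Moser-type bound (or a Jensen/Hölder argument) on the $z_1$-integral, and track how the factor $(1-\tfrac{1}{n^n}\mathcal M(u))$ appears: the geometric-series structure of the successive sharp constants $1/(n-1)^{n-1}$ versus $1/n^n$ is precisely what produces the exponent $-(n-1)$ in the end, with $\delta$ chosen (as a function of $\mathcal M(u)$, degenerating as $\mathcal M(u)^{1/n}\to n$) to optimize.

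The main obstacle, and the step I expect to absorb most of the technical work, is the slicing/monotonicity estimate: controlling the sliced energies $\int_{\Omega_{z_1}}(-u)(dd^c u|_{\text{slice}})^{n-1}$ by the genuine $n$-dimensional data $\mathcal M(u)$ and $\int_\Omega(-u)(dd^c u)^n$, uniformly and with the right constants, so that the product over the two integrations reproduces $(1-\tfrac{1}{n^n}\mathcal M(u))^{-(n-1)}$ rather than a weaker bound. This requires care because $(dd^c u)^n$ restricted to a slice is not simply $(dd^c_{z'}u)^{n-1}$ — there are the mixed terms involving $dz_1$ — and one must discard or estimate those nonnegative contributions in the right direction; the pseudoconvexity (or hyperconvexity) of $\Omega$ is used to guarantee the slices behave well and that boundary values vanish. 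A secondary point to be handled carefully is the passage from the sharp Brezis-Merle form \eqref{eq:sharp b-m intro} in dimension $n$ back to quasi-sharp Moser-Trudinger in dimension $n$ (needed to close the induction): this is a softer argument — one writes $e^{-u} = e^{-(1-\varepsilon)u}e^{-\varepsilon u}$, applies Brezis-Merle to the first factor after rescaling so its mass is subcritical, and uses a crude bound on the second factor — but getting the constant $1+\delta$ and the $(n-1)\log\delta$ term with the correct coefficients demands bookkeeping. Once these two estimates are in place the induction is immediate, and taking $u$ with $\int_\Omega(-u)(dd^c u)^n$ normalized yields the stated inequality with a dimensional constant $A$.
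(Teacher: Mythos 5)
Your high-level structure is right (induction over $n$, using the two implications quasi-sharp M-T in $\C^{n-1}\Rightarrow$ quasi-sharp B-M in $\C^{n}\Rightarrow$ quasi-sharp M-T in $\C^{n}$, seeded by Moser in the disc), but both implementations you sketch contain genuine gaps.

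\textbf{The slicing step.} You propose to bound the sliced energies $\int_{\Omega_{z_1}}(-u)(dd^c_{z'}u)^{n-1}$ pointwise (or integrably) in $z_1$ by the total Monge-Amp\`ere mass, invoking comparison/capacity estimates, and then run a one-dimensional exponential bound on the $z_1$-integral. This is not the right mechanism and would not produce the sharp constants. The crucial structural fact the paper uses (Prop.~\ref{pro: m-t impl demailly }) is that the function
$v(t):=\mathcal{E}(u(t,\cdot))$
is \emph{subharmonic} on the disc factor $D$, with total Laplacian mass
$\int_D d_t d^c_t v=\int_{\Omega\times D}(dd^c u)^{n+1}/(n+1)!$,
which is exactly the slicing formula \ref{eq:second deriv of energy} (the singular version is from \cite{ce}, Thm.~3.1). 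After applying the dim-$(n-1)$ M-T slicewise you get $\int_{\Omega\times D}e^{-u}\le\int_D \exp(-c\, v(t))\,dV(t)$ with $c$ the sharp M-T constant, and then the one-dimensional B-M inequality (Green/Jensen or Polya) applied to the subharmonic $v$ converts the Laplacian mass of $v$ into $\mathcal{M}(u)/(n+1)!$. The product of the two sharp constants factors as $\frac{n!}{(n+1)^n}\cdot\frac{1}{(n+1)!}=\frac{1}{(n+1)^{n+1}}$. No uniform bound on the slice energies is involved; what is needed is the identity for $\Delta_t v$, not an inequality. Your "discard the mixed $dz_1$ terms" worry points at the wrong object: you never estimate $(dd^c u)^n$ restricted to a slice against $(dd^c_{z'}u)^{n-1}$; you only compute $d_t d^c_t$ of the slice energy.

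\textbf{The B-M $\Rightarrow$ M-T step.} Writing $e^{-u}=e^{-(1-\varepsilon)u}e^{-\varepsilon u}$ and trying to make the first factor's mass subcritical cannot work, because $\mathcal{M}$ and $\mathcal{E}$ scale differently ($\mathcal{M}(\lambda u)=\lambda^n\mathcal{M}(u)$, $\mathcal{E}(\lambda u)=\lambda^{n+1}\mathcal{E}(u)$): there exist $u$ of arbitrarily small energy and arbitrarily large mass, so no fixed $\varepsilon$ renders $(1-\varepsilon)^n\mathcal{M}(u)<n^n$ in terms of $\mathcal{E}(u)$, and the second factor has no available "crude bound" either. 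The paper's Prop.~\ref{pro:Demailly imples m-t} instead uses Legendre duality (the thermodynamical formalism): one introduces the free energy $F_\gamma(\mu)=E(\mu)-\frac{1}{\gamma}D(\mu)$ where $E$ is the pluricomplex energy and $D$ the relative entropy, shows via concavity of $\mathcal{E}$ and $\mathcal{L}_\gamma^*=\frac{1}{\gamma}D$ that $F_\gamma\le C\Rightarrow\mathcal{G}_\gamma\le C$ (implication \ref{eq:bound on F implies bound on G}), and then bounds $F_\gamma$ for $\gamma<n+1$ using the $\alpha$-invariant $\alpha(\Omega,dV)$, which the quasi-sharp B-M inequality shows equals $n$ (Lemma \ref{lem:bounded using alpha}). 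This duality is the essential step and is not recoverable from H\"older-type splittings. Also a small slip in your statement of the induction: quasi-sharp M-T in dim $n-1$ only yields quasi-sharp B-M in dim $n$ for $n\ge3$ (the exponent $-(n-1)$ in the conclusion reflects this); it is sharp only for $n=2$, where the input is Moser's genuinely sharp inequality on the disc.
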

In particular, this proves the sharp inequality in the case when $n=2.$

\subsubsection{Applications to Monge-Ampère equations}

In section \ref{sec:Existence-of-optimizers} we consider the problem
of finding extremals for Moser-Trudinger type functionals that are
parametrized by the multiplicative constants in the corresponding
inequalities. In particular, we obtain solutions to the Euler-Lagrange
equations for these functionals which are Monge-Ampère equations with
exponential non-linearities.  In the settings of domains we obtain
the following
\begin{thm}
\label{thm:mean field intro}Let $\Omega$ be a pseudoconvex domain
in $\C^{n}$ and assume that $a<(n+1)^{n}.$ Then there exists $u\in\mathcal{C}^{0}(\bar{\Omega})$
solving the equation \textup{\[
(dd^{c}u)^{n}=a\frac{e^{-u}dV}{\int_{\Omega}e^{-u}dV}\,\,\mbox{in\,\,\ensuremath{\Omega}}\,\,\,\, u=0\,\,\mbox{on\,\ensuremath{\partial\Omega}}\]
} such that $u$ optimizes the corresponding Moser-Trudinger type
functional.
\end{thm}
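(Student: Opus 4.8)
The plan is to produce $u$ as a minimizer of the Moser--Trudinger type functional
\[
F_a(u):=-n!\,\mathcal{E}_0(u)-a\log\int_\Omega e^{-u}\,dV=\frac{1}{n+1}\int_\Omega(-u)(dd^cu)^n-a\log\int_\Omega e^{-u}\,dV,
\]
first over the cone $\mathcal{H}_0(\Omega)$ and then over the finite-energy class $\mathcal{E}^1_0(\Omega)$. Using $d\mathcal{E}_0|_u(v)=\frac{1}{n!}\int_\Omega v\,(dd^cu)^n$ and $\frac{d}{dt}|_{t=0}\log\int_\Omega e^{-(u+tv)}\,dV=-\int_\Omega v\,e^{-u}\,dV/\int_\Omega e^{-u}\,dV$, one checks that the first variation of $F_a$ vanishes precisely when $(dd^cu)^n=a\,e^{-u}\,dV/\int_\Omega e^{-u}\,dV$. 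Integrating this identity shows that any such solution automatically has total mass $\mathcal{M}(u)=a$, so no mass constraint needs to be imposed: it is enough to find an unconstrained minimizer of $F_a$.

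The crucial a priori fact is that $F_a$ is bounded below and coercive in the energy exactly when $a<(n+1)^n$, and this is where Theorem~\ref{thm:quasi-sharp m-t in ball intro} enters. Given $a<(n+1)^n$, choose $\delta>0$ so small that $a(1+\delta)<(n+1)^n$; the quasi-sharp Moser--Trudinger inequality gives $\log\int_\Omega e^{-u}\,dV\le\frac{1+\delta}{(n+1)^{n+1}}\int_\Omega(-u)(dd^cu)^n+C_\delta$, whence
\[
F_a(u)\ \ge\ \frac{(n+1)^n-a(1+\delta)}{(n+1)^{n+1}}\int_\Omega(-u)(dd^cu)^n-aC_\delta\ =\ \varepsilon_0\int_\Omega(-u)(dd^cu)^n-C
\]
with $\varepsilon_0>0$. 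In addition, by homogeneity the exponential integrability statement of Theorem~\ref{thm:quasi-sharp m-t in ball intro}, combined with Young's inequality (here $(n+1)/n>1$), upgrades to: for every $p<\infty$ there is $C_p$ with $\int_\Omega e^{-pu}\,dV\le C_p$, uniformly over all $u$ with $\int_\Omega(-u)(dd^cu)^n\le C'$.

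I would then run the direct method. Let $u_j\in\mathcal{H}_0(\Omega)$ be a minimizing sequence; by coercivity its energy is uniformly bounded, so $(u_j)$ is relatively compact in $L^1(\Omega)$ and, after passing to a subsequence, $u_j\to u_\infty$ in $L^1$ and almost everywhere, with $u_\infty\in\mathcal{E}^1_0(\Omega)$. The Monge--Amp\`ere energy $\mathcal{E}_0$ is upper semicontinuous along such sequences, so $-\mathcal{E}_0$ is lower semicontinuous; and the uniform $L^p$ bound just recorded makes $\{e^{-u_j}\}$ uniformly integrable, so $\int_\Omega e^{-u_j}\,dV\to\int_\Omega e^{-u_\infty}\,dV$ by Vitali's theorem. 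Hence $F_a(u_\infty)\le\liminf_j F_a(u_j)=\inf F_a$, i.e. $u_\infty$ is a minimizer. That $u_\infty$ solves $(dd^cu_\infty)^n=a\,e^{-u_\infty}\,dV/\int_\Omega e^{-u_\infty}\,dV$ in the pluripotential sense then follows from the standard variational argument for Monge--Amp\`ere energy functionals: one compares $u_\infty$ with perturbations obtained by projecting $u_\infty+t\varphi$ onto plurisubharmonic functions, along which $\mathcal{E}_0$ is differentiable in $t$ (as in the variational approach of \cite{bbegz}). I expect this extraction of the equation from the minimizing property---which must work around the non-differentiability of $\mathcal{E}_0$ in arbitrary directions---to be the main technical obstacle; the rest is soft.

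It remains to prove regularity. Since $u_\infty$ has finite energy, the Trudinger estimate of Theorem~\ref{thm:quasi-sharp m-t in ball intro} gives $e^{-u_\infty}\in L^p(\Omega)$ for every $p<\infty$, so the measure $\mu:=a\,e^{-u_\infty}\,dV/\int_\Omega e^{-u_\infty}\,dV$ has density in $L^p$ for some $p>1$. By Ko\l odziej's theorem on the Dirichlet problem $(dd^cv)^n=\mu$, $v|_{\partial\Omega}=0$, with $L^p$ right-hand side on a bounded pseudoconvex (hyperconvex) domain, this problem admits a solution in $\mathcal{C}^0(\bar\Omega)$, and by uniqueness in $\mathcal{E}^1_0(\Omega)$ that solution must equal $u_\infty$. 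Therefore $u_\infty\in\mathcal{C}^0(\bar\Omega)$ solves the stated Monge--Amp\`ere equation and minimizes $F_a$, i.e. optimizes the corresponding Moser--Trudinger type functional, which completes the proof.
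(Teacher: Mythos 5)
Your proposal is correct and follows essentially the same route as the paper (the paper's Theorem~\ref{thm:existence of solutions in domain}): coercivity from Theorem~\ref{thm:quasi-sharp m-t in ball intro}, compactness of energy sub-level sets and upper semicontinuity of $\mathcal{E}$ plus uniform exponential integrability to pass to the limit, the projection argument of \cite{bbgz}/\cite{a-c-c} to extract the Euler--Lagrange equation, and Ko\l odziej-type $L^p$-to-$\mathcal{C}^0$ stability for the regularity. Note only that you minimize $F_a$ where the paper maximizes $\mathcal{G}_\gamma$ (the same thing up to sign and the substitution $u\mapsto\gamma u$, $a=\gamma^n$), and that for the uniform $L^p$ bound on $e^{-u_j}$ it is a bit more direct to apply the Moser--Trudinger inequality of the form $\log\int e^{-v}\,dV\le A\int(-v)(dd^cv)^n+B$ to $v=pu_j$ and use the $(n+1)$-homogeneity of the energy, rather than going through Young's inequality --- though your argument works too.
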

Here $(dd^{c}u)^{n}$ refers to the usual notion of Monge-Ampère measure
in pluripotential theory introduced by Bedford-Taylor. In the case
when $n=1$ these equations are often called \emph{mean field equations}
in the literature, as they appear in a statistical mechanical context
\cite{clmp,k} (see section \ref{sub:mf eq domain}). We also establish
a {}``concentration/compactness'' principle for the behavior of
the solutions $u_{a}$ above when $a$ approaches the critical value
$(n+1)^{n}$ (see Theorem \ref{thm:conc-comp} for the precise statement).
In particular, it implies that if there is no blow-up point in the
boundary of $\Omega,$ then (after passing to a subsequence) \emph{either
}$u_{a}$ converges to a solution of the previous equation\emph{ or}
$u_{a}$ converges to a (weak) solution of the equation \[
(dd^{c}u)^{n}=(n+1)^{n}\delta_{z_{0}},\,\,\,\,\,\mbox{\ensuremath{u=0}}\,\mbox{\,\ on\,\ensuremath{\partial\Omega}}\]
Moreover $u$ has a minimal complex singularity exponent at $z_{0}$
\cite{d-k}. It seems natural to conjecture that $u$ coincides with
$(n+1)$ times the pluricomplex Green function with a pole at $z_{0}$
\cite{kl}. This is automatically the case when $n=1$ where there
has been rather extensive work on such {}``concentration/compactness''
principles, with various elaborations (see for example \cite{clmp,m-w}).

\subsection{\label{sub:Relation-to-previous}Relations to previous results}

\subsubsection*{The Kähler setting}

On the two-sphere the inequality in Theorem \ref{thm:aubin hyp intro}
was first shown by Moser with the sharp constant $A=1/2.$ Subsequently,
the general Riemann surface case was settled by Fontana \cite{font}
with the same sharp constant. Strictly speaking these latter inequalities
were shown to hold for\emph{ any} smooth function $u,$ under the
different (but equivalent) normalization condition $\int_{X}u\omega=0.$
Then $-\mathcal{E}_{\omega}(u)$ coincides with the usual two-homogeneous
Dirichlet energy and the growth rate with respect to $k$ can hence
be reduced, by scaling, to the case $k=1.$ It should however be emphasized
that in higher dimensions this reduction argument breaks down, since
the space $\mathcal{H}_{0}(X,\omega)$ is not preserved under scaling
with positive numbers $k.$ The sharp form of the Sobolev inequalities
on the two-sphere in \ref{eq:sob intro} was obtained by Beckner \cite{Bec}.

In the case when $X$ admits a Kähler-Einstein metric the Moser-Trudinger
inequality, for the anti-canonical class and for $k=1,$ was first
shown by Ding-Tian \cite{d-t} with $A=1/V(X)$ equal to the inverse
of the volume of $-K_{X}.$ This is the sharp constant in case $X$
admits holomorphic vector fields. More precisely, they showed that
any potential of a Kähler-Einstein metric on $X$ optimizes the corresponding
Moser-Trudinger inequality (when $dV$ is taken to depend on $\omega$
in a standard way). In case $X$ has no holomorphic vector field the
constant $A=1/V(X)$ may be improved slightly as shown in the coercivity
estimate of Phong-Song-Sturm-Weinkove \cite{p-s+} (confirming a previous
conjecture of Tian).

In the case of a general Fano manifold Ding \cite{din} obtained,
using the Green function estimate of Bando-Mabuchi, a Moser-Trudinger
inequality for all $u$ in $\mathcal{H}_{0}(X,\omega)$ with a uniform
positive lower bound $\epsilon$ on the Ricci curvature of the corresponding
Kähler metric $\omega_{u}$ (for $k=1).$ The case of Theorem \ref{thm:aubin hyp intro}
for the anti-canonical class (but possibly no Kähler-Einstein metric)
and with $k=1$ was recently shown in \cite{berm1}, building on \cite{bern2}.
The approach in \cite{berm1,bern1} will be further developed in the
present paper.

\subsubsection*{The setting of domains}

A quasi-sharp version of the Brezis-Merle type inequality \ref{eq:sharp b-m intro}
was recently shown by Åhag-Cegrell-Ko\l{}odziej-Pham-Zeriahi \cite{ce}.
More precisely it was shown that the inequality holds when raising
the bracket in \ref{eq:sharp b-m intro} to the power $n.$ However
the relation to the Moser-Trudinger inequality does not seem to have
been noted before and we use it, among other things, to slightly improve
the inequality in \cite{ce} with one power. The proof uses the {}``thermodynamical
formalism'' recently introduced in \cite{berm2} (in the Kähler setting)
and shows that the Moser-Trudinger inequality is equivalent to yet
another inequality, coinciding with the classical \emph{logarithmic
Hardy-Sobolev inequality} when $n=1.$ As explained in \cite{berm2}
the corresponding inequality in the Kähler setting amounts to the
boundedness from below of Mabuchi's K-energy functional. 

Towards the end of the writing of the present paper the preprint \cite{ce3}
appeared where the existence of solutions in Theorem \ref{thm:mean field intro}
and Moser-Trudinger inequalities is proved under the stronger assumption
that $a^{1/n}<n.$ 

Let us finally point out that Demailly \cite{de} originally showed
that a weaker version of inequality \ref{eq:sharp b-m intro}) is
equivalent to a local algebra inequality previously obtained in \cite{de fer-}
in the context of the study of birational rigidity of Fano manifolds.
This latter inequality says that \begin{equation}
\mbox{lc}(\mathcal{I})\geq n/\mbox{(e}(\mathcal{I}))^{1/n},\label{eq:local alg ineq}\end{equation}
 where $\mbox{lc}(\mathcal{I})$ is the log canonical threshold of
an ideal $\mathcal{I}$ of germs of holomorphic functions and $\mbox{e}(\mathcal{I})$
is its Samuel multiplicity.

\subsection{\label{sub:Out-line-of-the}Outline of the proof of Theorems \ref{thm:aubin hyp intro},
\ref{thm:uniform fano intro}}

As is well-known a Kähler form $\omega$ is integral precisely when
it can be realized as the (normalized) curvature form of a metric
$h$ on an ample line bundle $L\rightarrow X.$ Abusing notation slightly
this means that \[
\omega=dd^{c}\phi_{0}\]
where $h=e^{-\phi_{0}}$ is the expression of the metric $h$ wrt
a local holomorphic frame. Hence, $\omega_{u}$ is the curvature form
of the metric on $L$ with weight $\phi:=\phi_{0}+u.$ The proof of
Theorem \ref{thm:aubin hyp intro} follows the same outline as the
proof of the Moser-Trudinger inequality in \cite{berm1,bern2} concerning
the case when $L=-K_{X}$ and $\phi_{0}$ is the weight of a Kähler-Einstein
metric - with some important modifications. The proof in \cite{berm1,bern2}
is based on consideration of the functional \[
\mathcal{G}(\phi):=\log\int_{X}e^{-\phi}+\frac{1}{V}\mathcal{E}(\phi,\phi_{0}),\]
 where we have used that $e^{-\phi}$ defines a global volume form
on $X$ (since $L=K_{X})$ and where $\mathcal{E}(\phi,\phi_{0}):=\mathcal{E}_{\omega}(\phi-\phi_{0}).$
The Moser-Trudinger inequality says that $\mathcal{G}$ is negative
on the space \emph{$\mathcal{H}(-K_{X})$} of positively curved metrics
on $-K_{X}$. But $\mathcal{G}$ is geodesically concave on the space
\emph{$\mathcal{H}(-K_{X})$} equipped with the Mabuchi metric (see
the next section) and the Kähler-Einstein condition says that $\phi_{0}$
is a critical point of $\mathcal{G}$. Moreover, by definition $\mathcal{G}$
vanishes at $\phi=\phi_{0}$ and that ends the proof.

At first glance, not much of this argument works in our situation
of a general line bundle $L\rightarrow X$. The functional \[
\phi\mapsto\log\int_{X}e^{-(\phi-\phi_{0})}dV\]
 has no obvious concavity properties and we have in general nothing
that corresponds to the Kähler-Einstein condition. To handle the lack
of concavity, we use a different functional, defined for each point
$x$ in $X:$ \[
\phi\mapsto\log(K_{\phi_{0}}(x)/K_{\phi}(x)),\]
where $K_{\phi}$ is the restriction to the diagonal of the Bergman
kernel for the space of global sections $H^{0}(X,L+K_{X})$ of the
adjoint line bundle $L+K_{X}$, which is known to be concave by the
results in \cite{bern1,bern1b}. It then turns out that we can replace
the Kähler-Einstein condition by a standard estimate for the Bergman
kernel in terms of the volume form; see \cite{berm1} where a similar
argument was used. The remaining problem is then to get from an estimate
of the Bergman kernel to an estimate of the metric on $L$ itself.
On a compact manifold, this can be done using the basic formula \[
\int_{X}K_{\phi}(x)e^{-\phi}=N\]
where $N$ is the dimension of $H^{0}(X,K_{X}+L)$. The growth rate
in $k$ in the inequality of the theorem is a consequence of a the
Bergman kernel estimate, using that $k\phi$ is the weight of a metric
on the $k$ th tensor power of $L,$ written as $kL$ in our additive
notation.

As for Theorem \ref{thm:uniform fano intro} it is proved by noting
that the Bergman kernel estimate can be made to be uniform over all
Fano manifolds of the same dimension by picking a reference metric
$\phi_{0}$ whose curvature form has a universal lower bound on its
Ricci curvature.

\subsection*{Acknowledgments}

It is a pleasure to thank Sébastien Boucksom, Phillipe Eyssidieux,
Vincent Guedj and Ahmed Zeriahi for the stimulation coming from \cite{bbegz}.
Also thanks to Yuji Odaka for pointing out to us that Yuji Sano had
noted that Aubin's conjecture for the optimal constant in the setting
of Fano manifolds cannot be correct.

\subsection{\label{sub:Notation-and-preliminaries}Notation and preliminaries}

Here we will briefly recall the notions of (quasi-) psh functions
and finite energy spaces in setting of compact manifolds $X$ and
domains $\Omega.$ In practice, it will, by approximation, be enough
to prove the inequalities we will be interested in for smooth (or
bounded) functions. However, the finite energy spaces play an important
role in the variational approach used in section \ref{sec:Existence-of-optimizers}.

\subsubsection*{The setting of a compact manifold $X$}

Let $(X,\omega)$ be a compact complex manifold and $\omega$ a smooth
real closed $(1,1)-$form on $X$ such that $\omega\geq0.$ We will
mainly be concerned with the case when $\omega>0,$ i.e. when $(X,\omega)$
is a Kähler manifold. Denote by $PSH(X,\omega)$ be the space of all
$\omega-$psh functions $u$ on $X,$ i.e. $u\in L_{1}(X)$ and $u$
is upper-semicontinuous (usc) and \[
\omega_{u}:=\omega+\frac{i}{2\pi}\partial\bar{\partial}u:=\omega+dd^{c}u\geq0,\]
 in the sense of currents (the normalizations are made so that $dd^{c}\log|z|^{2}=1$
when $n=1).$ We will write $\mathcal{H}(X,\omega)$ for the interior
of $PSH(X,\omega)\cap\mathcal{C}^{\infty}(X)$ (called the space of
Kähler potentials when $\omega>0)$ and $\mathcal{H}_{0}(X,\omega)$
for its subspace defined by the normalization $\sup_{X}u=0.$ We will
also use the (non-standard) notion $\mathcal{H}(X,\omega)_{b}:=PSH(X,\omega)\cap L^{\infty}(X)$
for the\emph{ b}ounded functions in $PSH(X,\omega).$ By the local
theory of Bedford-Taylor the Monge-Ampere operator \[
MA(u):=\omega_{u}^{n}/n!\]
 is well-defined on $\mathcal{H}(X,\omega)_{b}$ and continuous under
sequences decreasing to elements in $\mathcal{H}(X,\omega)_{b}$ as
are all powers $\omega_{u}^{p}.$ In particular, the functional $\mathcal{E}_{\omega}$
(formula \ref{eq:energy intro}) is well-defined and continuous in
the previous sense. Following \cite{begz,bbgz} $\mathcal{E}_{\omega}$
may be extended to all of $PSH(X,\omega)$ by setting \[
\mathcal{E}_{\omega}(u):=\inf_{v\in\mathcal{H}(X,\omega)_{b},\, v\geq u}\mathcal{E}_{\omega}(v)\in[-\infty,\infty[\]
 Now the space $\mathcal{E}^{1}(X,\omega)$ of all $\omega-$psh functions
of \emph{finite energy }may be defined as the set of all $u$ such
that $\mathcal{E_{\omega}}(u)>-\infty.$ As explained in \cite{begz,bbgz}
it coincides with the space with the same name introduced in \cite{g-z2}.

\subsubsection*{Metrics/weights on a line bundle vs. $\omega-$psh functions}

In the integral case, i.e. when $[\omega]=c_{1}(L)$ for a holomorphic
line bundle $L\rightarrow X,$ the space $PSH(X,\omega)$ may be identified
with the space of (singular) Hermitian metrics on $L$ with positive
curvature current. More precisely, let $s$ be a trivializing local
holomorphic section of $L,$ i.e. $s$ is non-vanishing an a given
open set $U$ in $X.$\emph{ }First we identify an Hermitian metric
$h_{0}=\left\Vert \cdot\right\Vert $ on $L$ with its \emph{weight
}$\phi,$ which is locally defined by the relation \[
\left\Vert s\right\Vert ^{2}=e^{-\phi_{0}}\]
The (normalized) curvature $\omega$ of the metric is the globally
well $(1,1)-$current defined by the following local expression: \[
\omega=dd^{c}\phi_{0}\]
The identification with $PSH(X,\omega)$ referred to above is now
obtained by fixing $\phi_{0}$ and letting $\phi\mapsto u:=\phi-\phi_{0}$
so that $dd^{c}\phi=\omega_{u}.$ We will denote by $\mathcal{H}_{L}$
the space of all semi-positively curved metrics/weights on $L.$

\subsubsection*{The setting of a domain $\Omega$ in $\C^{n}$ }

Let $\Omega$ be a bounded domain $\C^{n}$ (in this setting $\omega=0)$
which is \emph{hyperconvex}, i.e. it admits a negative continuous
psh exhaustion function (for example a pseudoconvex domain with Lipschitz
continuous boundary) The main reason that we will consider general
hyperconvex domains (with possible non-smooth boundary) is that this
property is preserved under Cartesian products. When $\Omega$ has
smooth boundary we let $\mathcal{H}_{0}(\Omega)$ be the subspace
of all smooth psh functions on $\bar{\Omega}$ such that $u=0$ on
$\partial\Omega.$ Following \cite{ce0,ce} (see also \cite{b-g-z}
for a comparison with the Kähler setting) it will also be convenient
to use two singular versions of $\mathcal{H}_{0}(\Omega),$ namely
$\mathcal{F}(\Omega)$ and $\mathcal{E}_{1}(\Omega),$ where the Monge-Ampère
mass $\mathcal{M}(u)$ \ref{eq:ma mass} and energy $\mathcal{E}_{0}(:=\mathcal{E})$
\ref{eq:energy on domain} are well-defined and finite, respectively.
More precisely, let first $\mathcal{H}_{0}(\Omega)_{b}$ be the space
all $u$ in $PSH\ensuremath{(\Omega)\cap L^{\infty}(\Omega)}$ such
that $\mathcal{M}(u)<\infty$ and such that $\lim_{\zeta\rightarrow z}u(z)=0$
for any $z\in\partial\Omega$ (called the space of psh {}``test-functions''
$\mathcal{E}_{0}(\Omega)$ in \cite{ce0}). Now $\mathcal{F}(\Omega)$
is defined as the space of all $u$ such that there exists $u_{j}\in\mathcal{H}_{0}(\Omega)_{b}$
decreasing to $u$ with $\mathcal{M}(u_{j})\leq C.$ The Monge-Ampère
operator extends to $\mathcal{E}_{0}(\Omega)$ in is continuous under
decreasing limits. As for the space $\mathcal{E}_{1}(\Omega)$ it
is defined in a similar manner, but by demanding that $-\mathcal{E}(u_{j})\leq C.$
There is also an alternative characterization of $\mathcal{F}(\Omega)$
as the set of all $u$ in the {}``domain of definition of the Monge-Ampère
operator'' such that $u$ has finite total Monge-Ampère mass and
with smallest maximal plurisubharmonic majorant equal to zero (see 

For the purpose of the present paper it will in practice be enough
to know that if $u\in PSH\ensuremath{(\Omega)\cap L^{\infty}(\Omega)}$
such that $\lim_{\zeta\rightarrow z}u(z)=0$ for any $z\in\partial\Omega,$
then $u\in\mathcal{F}(\Omega)$ if $\int_{\Omega}(dd^{c}u)^{n}<\infty$
and similarly $u\in\mathcal{E}^{1}(\Omega)$ if $\int_{\Omega}(-u)(dd^{c}u)^{n}<\infty$
(see \cite{ce0,ce}). 

It may also be convenient to recall (even if, strictly speaking, it
will not be needed) the approximation result in \cite{ce2} saying
that \emph{any} negative psh function $u$ on a hyperconvex domain
$\Omega$ can be written as decreasing limit of {}``smooth test functions'',
i.e. psh functions $u_{j}$ in $\mathcal{C}(\bar{\Omega})\cap\mathcal{C}^{\infty}(\Omega),$
vanishing on the boundary and with finite Monge-Ampère mass. As a
consequence one may as well replace the space $\mathcal{H}_{0}(\Omega)_{b}$
in the previous definitions with the space of {}``smooth test functions''
in the previous sense.

\section{Moser-Trudinger inequalities on Kähler manifolds}

Let $X$ be an $n$-dimensional compact Kähler manifold and let $L$
be a semipositive line bundle over $X$ and assume that $L$ is big,
i.e. \[
V=\int_{X}(dd^{c}\phi)^{n}/n!>0\]
for any (and hence all) $\phi$ in $\mathcal{H}(L).$ We fix $\phi_{0}\in\mathcal{H}(L)$
and let $\omega:=dd^{c}\phi_{0}.$

\subsection{Energy, geodesics and Bergman kernels (preliminaries)}

Given $\phi$ and $\phi_{0}$ in $\mathcal{H}(L)$ we define (minus)
the relative energy by \[
\mathcal{E}(\phi,\phi_{0})=\frac{1}{(n+1)!}\int_{X}(\phi_{0}-\phi)\sum_{0}^{n}(dd^{c}\phi_{0})^{k}\wedge(dd^{c}\phi)^{n-k}\]
If $t\rightarrow\phi_{t}$ is a smooth curve in $\mathcal{H}(L)$
and \[
\dot{\phi}_{t}:=\frac{d\phi_{t}}{dt}\]
 then \[
\frac{d}{dt}\mathcal{E}(\phi_{t},\phi_{0})=\int_{X}\dot{\phi}_{t}(dd^{c}\phi_{t})^{n}/n!.\]
 This formula, together with the normalization $\mathcal{E}(\phi_{0},\phi_{0})=0$
can also be used to define $\mathcal{E}$.

A basic property of $\mathcal{E}$ is that it is linear along \textit{geodesics}
in $\mathcal{H}(L)$ and concave along subgeodesics defined wrt Mabuchi's
Riemannian metric on $\mathcal{H}(L).$ For technical reasons we will
work with the following weaker notion of geodesics. Given two smooth
metrics $\phi_{0}$ and $\phi_{1}$ the corresponding geodesic $\phi_{t}$
is defined as the following regularized envelope: \[
\phi_{t}:=\Phi(z,t):=\sup_{\mathcal{\psi\in K}}\left\{ \Psi(z,t)\right\} ^{*}\]
where we have extended $t$ to the strip $\mathcal{T=}[0,1]+i\R$
in $\C$ and $\mathcal{K}$ is the set of all semi-positively curved
metrics $\Psi$ on the pull-back of $L$ to $X\times\mathcal{T}$
such that $\psi_{0}\leq\phi_{0}$ and $\psi_{0}\leq\phi_{1}.$ We
will sometimes refer to a curve $\psi_{t}:=\Psi(\cdot,t)$ above as
a\emph{ subgeodesic}. When $L$ is ample it was shown in \cite{b-d}
that $\Psi$ is a continuous solution to the Dirichlet problem for
the Monge-Ampère operator on $M:=X\times\mathcal{T},$ i.e. \[
(dd^{c}\Phi)^{n+1}=0\]
 in the interior of $M$ (in the usual sense of pluripotential theory)
and on the boundary $\partial M$ the metric $\Phi$ coincides with
the $i\R$ invariant boundary data determined by $\phi_{0}$ and $\phi_{1}.$
However, we will only need some very modest regularity properties
of $\Phi,$ namely that $\Phi$ is locally bounded and that $\Phi(t,\cdot)=\phi_{t}$
converges uniformly to the given boundary data as $t$ approaches
$\partial\mathcal{T}$. As shown by a simple barrier argument this
is always the case as long as $L$ is semi-positive (see \cite{bern2}).
Indeed, \begin{equation}
\chi_{t}:=\max\{\phi_{0}-A\Re t,\phi_{1}-A(1-\Re t)\}\label{eq:barrier in compact case}\end{equation}
gives a candidate for the sup defining $\phi_{t}$ converging uniformly
towards the right boundary values. Hence so does $\phi_{t}.$ Also
note that, by imposing $S^{1}-$symmetry in the complex variable $t$
we might as well replace $\mathcal{T}$ with an annulus $\mathcal{A}.$ 
\begin{lem}
\label{lem:prop of energy kahler}Let $\phi_{t}$ be a (weak) geodesic
as above. Then $t\mapsto\mathcal{E}(\phi_{t},\phi_{0})$ is affine
and continuous up to the boundary of $[0,1].$ Moreover, if\textup{
$\dot{\phi}_{0}$ denotes the right derivative of $\phi_{t}$ at $t=0$
(which exists by convexity), then} \textup{\[
\frac{d}{dt}_{t=0^{+}}\mathcal{E}(\phi_{t})\leq\int_{\mathcal{B}}\dot{\phi}_{0}(dd^{c}\phi_{0})^{n}/n!,\]
}
\end{lem}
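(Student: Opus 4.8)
The plan splits into two parts: the affineness and continuity of $t\mapsto\mathcal{E}(\phi_{t},\phi_{0})$, which I would obtain from the classical second variation of $\mathcal{E}$ combined with an approximation of the weak geodesic by regular ones; and the endpoint inequality, which I would deduce from the affineness, the standard ``concavity'' estimate for $\mathcal{E}$, and a monotone passage to the limit in the difference quotients of $\phi_{t}$.

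\emph{Affineness.} For a smooth curve $\phi_{t}$ in $\mathcal{H}(L)$, differentiating the first variation formula $\frac{d}{dt}\mathcal{E}(\phi_{t},\phi_{0})=\frac{1}{n!}\int_{X}\dot\phi_{t}(dd^{c}\phi_{t})^{n}$ recalled above once more and integrating by parts gives
\[
\frac{d^{2}}{dt^{2}}\mathcal{E}(\phi_{t},\phi_{0})=\frac{1}{n!}\int_{X}\Bigl(\ddot\phi_{t}(dd^{c}\phi_{t})^{n}-n\,d\dot\phi_{t}\wedge d^{c}\dot\phi_{t}\wedge(dd^{c}\phi_{t})^{n-1}\Bigr),
\]
and, imposing $S^{1}$-symmetry and regarding $\Phi$ as a metric on the pullback of $L$ to $M:=X\times\mathcal{A}$, the right-hand side equals, up to a fixed positive constant, the push-forward $\pi_{*}\bigl((dd^{c}_{z,t}\Phi)^{n+1}\bigr)$ under $\pi\colon M\to\mathcal{A}$. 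Hence $t\mapsto\mathcal{E}(\phi_{t},\phi_{0})$ is convex for any subgeodesic, and it is affine as soon as $(dd^{c}\Phi)^{n+1}=0$. To reach the actual weak geodesic I would approximate: for $L$ ample, $\Phi$ is a limit, as $\varepsilon\downarrow0$, of the smooth $\varepsilon$-geodesics $\Phi_{\varepsilon}$ with the same boundary data solving $(dd^{c}\Phi_{\varepsilon})^{n+1}=\varepsilon\,\omega_{M}^{n+1}$, so that $\bigl|\frac{d^{2}}{dt^{2}}\mathcal{E}(\phi_{\varepsilon,t},\phi_{0})\bigr|\le C\varepsilon\to0$; for merely semi-positive (big) $L$ one first replaces $L$ by the ample bundle $L+\delta A$ with perturbed weights and lets $\delta\to0$ afterwards. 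The limit is legitimate once one knows that $\mathcal{E}$ and its first variation are stable along such monotone families of uniformly bounded potentials, which follows from the continuity of mixed Monge--Amp\`ere currents under monotone limits (or one may simply quote the corresponding statement from \cite{berm1,bern2}).

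\emph{Continuity and the endpoint inequality.} Next I would establish continuity up to $t=0,1$: since $\phi_{t}\to\phi_{0}$ uniformly on $X$ as $t\to0$ (and $\phi_{t}\to\phi_{1}$ uniformly as $t\to1$) by the barrier argument recalled just before the statement, and the $\phi_{t}$ are uniformly bounded, weak continuity of the mixed currents $(dd^{c}\phi_{t})^{k}\wedge(dd^{c}\phi_{0})^{n-k}$ yields $\mathcal{E}(\phi_{t},\phi_{0})\to\mathcal{E}(\phi_{0},\phi_{0})=0$ and $\to\mathcal{E}(\phi_{1},\phi_{0})$, so the affine function $t\mapsto\mathcal{E}(\phi_{t},\phi_{0})$ is continuous on $[0,1]$. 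Being affine with value $0$ at $t=0$, it satisfies $\frac{d}{dt}\big|_{t=0^{+}}\mathcal{E}(\phi_{t})=\frac{1}{t}\mathcal{E}(\phi_{t},\phi_{0})$ for all $t\in(0,1]$ (convexity alone would suffice here). I would then invoke the comparison inequality $\mathcal{E}(\phi,\phi_{0})\le\frac{1}{n!}\int_{X}(\phi-\phi_{0})(dd^{c}\phi_{0})^{n}$, valid for $\phi\in\mathcal{H}(L)_{b}$ and proved by rewriting, after integration by parts, the difference of the two sides as $-\int_{X}d(\phi-\phi_{0})\wedge d^{c}(\phi-\phi_{0})\wedge S$ with $S$ a positive closed $(n-1,n-1)$-current; applied to $\phi=\phi_{t}$ it gives
\[
\frac{d}{dt}\Big|_{t=0^{+}}\mathcal{E}(\phi_{t})\le\frac{1}{n!}\int_{X}\frac{\phi_{t}-\phi_{0}}{t}(dd^{c}\phi_{0})^{n}.
\]
To conclude, for fixed $z$ the function $t\mapsto\phi_{t}(z)$ is convex (the envelope is invariant under imaginary translations in $t$, so its restriction to $\{z\}\times\mathcal{A}$ is a rotation-invariant subharmonic function of $t$), hence the difference quotients $\frac{1}{t}(\phi_{t}-\phi_{0})$ decrease to $\dot\phi_{0}$ as $t\downarrow0$ and, by the barrier, remain in a fixed bounded range; dominated convergence against the finite measure $(dd^{c}\phi_{0})^{n}$ then gives $\frac{d}{dt}\big|_{t=0^{+}}\mathcal{E}(\phi_{t})\le\frac{1}{n!}\int_{X}\dot\phi_{0}(dd^{c}\phi_{0})^{n}$, as claimed.

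\emph{Main obstacle.} The one genuinely delicate step is the affineness in the weak setting: with only the ``modest'' regularity of $\Phi$ at hand there is no direct grip on $(dd^{c}\Phi)^{n+1}$, so one must run the second-variation argument on the smooth $\varepsilon$-geodesics (and, if $L$ is not ample, on $L+\delta A$) and then check carefully that $\mathcal{E}$ --- and the first-variation identity --- survive these monotone limits. The remaining ingredients (weak continuity of Monge--Amp\`ere masses, the elementary comparison inequality, and the one-variable convexity of $t\mapsto\phi_{t}(z)$) are routine.
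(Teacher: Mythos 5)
Your argument follows the same route as the paper, which is quite terse here: the paper simply records that $\mathcal{E}$ is affine along smooth geodesics by the second-variation identity $d_td_t^c\mathcal{E}(\phi_t,\phi_0)=\int_X(dd^c\Phi)^{n+1}$, defers the passage to weak geodesics to \cite{berm1} (where the same $\varepsilon$-geodesic / perturbed-ample-class approximation you describe is carried out), and for the endpoint inequality refers to the proof of its domain analogue (Lemma 3.4), which uses exactly the concavity comparison $\mathcal{E}(\phi_t)-\mathcal{E}(\phi_0)\le \frac{1}{n!}\int(\phi_t-\phi_0)(dd^c\phi_0)^n$ followed by letting $t\downarrow0$. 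Your treatment of the last step is slightly more detailed than the paper's — you make explicit the monotonicity and uniform two-sided boundedness of the difference quotients $t^{-1}(\phi_t-\phi_0)$ (from convexity in $t$ and the barrier $\chi_t$) to justify passing to the limit — but this is just the content hidden behind the paper's ``letting $t\to0^+$'' and is correct.

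One small caveat worth recording: your parenthetical remark that $\frac{d}{dt}|_{t=0^+}\mathcal{E}(\phi_t)=\frac{1}{t}\mathcal{E}(\phi_t,\phi_0)$ as an identity uses the affineness you just established; the inequality $\frac{d}{dt}|_{t=0^+}\mathcal{E}(\phi_t)\le\frac{1}{t}\mathcal{E}(\phi_t,\phi_0)$, which is all you then use, indeed only needs convexity of $t\mapsto\mathcal{E}(\phi_t,\phi_0)$, which in turn follows from $(dd^c\Phi)^{n+1}\ge0$ for the subgeodesic $\Phi$. So if one only wanted the endpoint inequality (and not the affineness and continuity, which are also asserted), the approximation step could be avoided entirely; for the full statement your plan is what is needed.
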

As pointed out above this is well-known in the case when $\phi_{t}$
is smooth and follows immediately from the formula \begin{equation}
d_{t}d_{t}^{c}\mathcal{E}(\phi_{t},\phi_{0})=\int_{X}(dd^{c}\Phi)^{n+1}\label{eq:second deriv of energy}\end{equation}
 The general case then follows by approximation (see \cite{berm1});
see also Prop \ref{lem:prop of energy along geod in domain} for the
corresponding properties in the setting of domains. 

Any element $\phi$ in $\mathcal{H}(L)$ defines an $L^{2}$ metric
on $H^{0}(X,K_{X}+L)$, \[
\|u\|_{\phi}^{2}=i^{n^{2}}\int u\wedge\bar{u}e^{-\phi}.\]
 The Bergman kernel for this $L^{2}$-metric is denoted $K_{\phi}(x)$.
It can be defined as in the introduction \[
K_{\phi}(x)=i^{n^{2}}\sum u_{j}(x)\wedge\bar{u}_{j}(x)\]
 where $u_{j}$ is an orthonormal basis for $H^{0}(X,K_{X}+L)$. Alternatively,
\begin{equation}
K_{\phi}(x)=\sup_{H^{0}(X,K_{X}+L)}\,\,\{|u(x)|^{2};\|u\|_{\phi}\leq1\}.\label{eq:extremal def}\end{equation}
 Here the expression $|u(x)|^{2}$ depends on the choice of a trivialization
of $L$ near $x$, but $\log K_{\phi}$ is invariantly defined as
a metric on $K_{X}+L$. As a consequence, the quotient of two Bergman
kernels \[
K_{\phi}(x)/K_{\phi_{0}}(x)\]
 is a global function on $X$, smooth if the sections in $H^{0}(X,K_{X}+L)$
have no common zeros.

We will use a result from \cite{bern1} saying that \[
t\rightarrow\log K_{\phi_{t}}(x)\]
 is, for any $x$ fixed, convex along (sub)geodesics $\phi_{t}.$

The first result we will need is the following simple formula for
the derivative of the Bergman kernel along a curve (see for example
the appendix in \cite{berm1}). 
\begin{lem}
\label{lem:der of bergman}Let $\phi_{t}$ be a smooth curve in $\mathcal{H}(L).$
Then \[
\frac{d}{dt}K_{\phi_{t}}(x)=\int_{X}\dot{\phi}_{t}|K_{\phi_{t}}(x,y)|^{2}e^{-\phi_{t}}\]
 where the off-diagonal Bergman kernel is \[
K_{\phi_{t}}(x,y):=\sum c_{n}u_{j}(x)\wedge\bar{u}_{j}(y)\]
 for any orthonormal basis of $H^{0}(X,K_{X}+L)$. 
\end{lem}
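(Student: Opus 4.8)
The plan is to differentiate the Bergman kernel directly from its definition as a sum of squares of an orthonormal basis, tracking how the basis itself varies along the curve $\phi_t$. First I would fix $x$ and work with the extremal characterization (\ref{eq:extremal def}), or equivalently with a choice of orthonormal basis $(u_j)$ for $H^0(X,K_X+L)$ with respect to the $L^2$ metric $\|\cdot\|_{\phi_0}$ at some base time $t=0$. The key technical point is that we cannot keep the basis orthonormal for all $t$, so instead I would introduce a smooth family $(u_j(t))$ of bases of the fixed finite-dimensional vector space $H^0(X,K_X+L)$ that is orthonormal for $\|\cdot\|_{\phi_t}$ for every $t$; such a family exists by, e.g., applying Gram--Schmidt to a fixed basis with smoothly varying Gram matrix $G_{ij}(t) = \langle e_i, e_j\rangle_{\phi_t}$, whose entries are smooth in $t$ because $\dot\phi_t$ is smooth and $e^{-\phi_t}$ depends smoothly on $t$.

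Next I would write $K_{\phi_t}(x) = i^{n^2}\sum_j u_j(t)(x)\wedge \overline{u_j(t)(x)}$ and differentiate, obtaining two types of terms: those where $\frac{d}{dt}$ hits the coefficients of $u_j(t)$ expressed in the fixed basis, and the term where the derivative hits nothing explicit (there is none, since $u_j(t)(x)$ is holomorphic-section-valued and its pointwise value varies only through the coefficients). Writing $u_j(t) = \sum_k A_{jk}(t) e_k$ with $A(t)$ chosen so that $A(t) G(t) A(t)^* = I$, the derivative of the orthonormality relation gives $\dot A G A^* + A \dot G A^* + A G \dot A^* = 0$. The crucial observation is that $\dot G_{ij} = -\int_X \dot\phi_t\, e_i \wedge \bar e_j\, e^{-\phi_t}$ (differentiating under the integral sign and using $\frac{d}{dt}e^{-\phi_t} = -\dot\phi_t e^{-\phi_t}$), so in the orthonormal frame $\dot G$ becomes precisely the matrix $M_{ij}(t) = -\int_X \dot\phi_t\, u_i(t)\wedge\overline{u_j(t)}\, e^{-\phi_t}$. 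From the differentiated orthonormality relation one extracts that the symmetric (Hermitian) part of the "connection matrix" governing $\dot u_j(t)$ equals $\tfrac12 M(t)$ (the skew-Hermitian part is a gauge freedom and drops out of the real quantity $K_{\phi_t}(x)$), giving $\frac{d}{dt}K_{\phi_t}(x) = i^{n^2}\sum_{j,k} M_{jk}(t)\, u_j(t)(x)\wedge\overline{u_k(t)(x)}$ up to sign.

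Then I would recognize the right-hand side as the claimed expression: substituting the formula for $M_{jk}$ and interchanging the finite sum with the integral yields
\[
\frac{d}{dt}K_{\phi_t}(x) = \int_X \dot\phi_t \Bigl(i^{n^2}\sum_{j,k} u_j(t)(x)\wedge\overline{u_k(t)(y)}\, \overline{u_j(t)(x)}\wedge u_k(t)(y)\Bigr) e^{-\phi_t},
\]
evaluated with the $y$-integration, and the bracketed kernel is exactly $|K_{\phi_t}(x,y)|^2$ for the off-diagonal Bergman kernel $K_{\phi_t}(x,y) = \sum_k c_n u_k(t)(x)\wedge\overline{u_k(t)(y)}$, since the reproducing property makes the expression independent of the orthonormal basis chosen. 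The sign conventions (the constant $c_n$, the placement of $i^{n^2}$, and whether $\dot G = +M$ or $-M$) need to be tracked carefully so that the final answer comes out with a plus sign as stated; this bookkeeping, together with the justification for differentiating under the integral (uniform smoothness on the compact $X$), is routine.

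The main obstacle I anticipate is purely organizational rather than conceptual: handling the non-uniqueness of the smoothly-varying orthonormal basis cleanly, i.e. making precise that the skew-Hermitian gauge part of $\dot A$ genuinely cancels in $K_{\phi_t}(x)$ so that the derivative depends only on the intrinsic data $M(t)$. A slick way to sidestep the gauge issue entirely is to use the extremal/variational characterization (\ref{eq:extremal def}): at a fixed $x$, let $u_t$ be the (essentially unique, up to phase) unit-norm extremal section with $|u_t(x)|^2 = K_{\phi_t}(x)$; then a first-variation argument using Lagrange multipliers shows $\frac{d}{dt}K_{\phi_t}(x) = -\partial_t\|u_t\|_{\phi_t}^2\big|_{\text{fixed }u} = \int_X \dot\phi_t |u_t|^2 e^{-\phi_t}$, and rewriting $|u_t(y)|^2$ in terms of the normalized reproducing kernel $K_{\phi_t}(x,y)/K_{\phi_t}(x)^{1/2}$ recovers the stated formula. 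Either route works; I would present whichever the reader of the appendix in \cite{berm1} finds most transparent, and simply cite that appendix for the detailed sign computation.
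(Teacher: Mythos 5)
The paper does not give its own proof of Lemma \ref{lem:der of bergman}; it cites the appendix of \cite{berm1}, where the computation is the standard one. Your primary route is exactly that standard computation, phrased through a moving orthonormal basis $u_j(t)=\sum_k A_{jk}(t)e_k$. It is a little cleaner (and avoids the gauge discussion entirely) to differentiate the fixed-basis expression $K_{\phi_t}(x,y)=c_n\sum_{j,k}(G(t)^{-1})_{kj}\,e_j(x)\wedge\bar e_k(y)$ directly, using $\tfrac{d}{dt}G^{-1}=-G^{-1}\dot G\,G^{-1}$ and $\dot G_{kj}=-i^{n^2}\int_X\dot\phi_t\,e_k\wedge\bar e_j\,e^{-\phi_t}$. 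At $t=0$ with $(u_j)$ orthonormal ($G=I$) this gives $\dot K_{\phi_0}(x)=-c_n\sum_{j,k}\dot G_{kj}\,u_j(x)\wedge\bar u_k(x)$, and substituting $\dot G_{kj}$ and factoring the double sum as $\bigl(\sum_j u_j(x)\wedge\bar u_j(y)\bigr)\cdot\overline{\bigl(\sum_k u_k(x)\wedge\bar u_k(y)\bigr)}=|K_{\phi_0}(x,y)|^2$ produces the claimed identity with the correct sign. So the sign and transpose bookkeeping you flag does close up; what you have is correct in substance and essentially the same argument as \cite{berm1}.

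One slip to fix in your alternative route through the extremal characterization \ref{eq:extremal def}: the envelope-theorem identity is $\tfrac{d}{dt}K_{\phi_t}(x)=K_{\phi_t}(x)\cdot\bigl(-\partial_t\|u_t\|_{\phi_t}^2\big|_{u\ \text{fixed}}\bigr)$ for the unit-norm extremal $u_t$ --- the Lagrange multiplier at the maximum is $K_{\phi_t}(x)$ --- not $-\partial_t\|u_t\|^2_{\phi_t}$ as you wrote. The missing factor exactly cancels the denominator in $|u_t(y)|^2=|K_{\phi_t}(x,y)|^2/K_{\phi_t}(x)$, so your final expression is right, but the intermediate line is off by that factor of $K_{\phi_t}(x)$. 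This is most painlessly handled by differentiating $\log K_{\phi_t}(x)$ instead, for which the envelope theorem yields directly $\tfrac{d}{dt}\log K_{\phi_t}(x)=\int_X\dot\phi_t|u_t|^2 e^{-\phi_t}$.
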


\subsection{Moser-Trudinger type inequalities }

The next proposition is the crux of the proof of the Moser-Trudinger
inequalities.
\begin{prop}
Let $\phi$ and $\phi_{0}$ be two metrics in $\mathcal{H}(L)$, satisfying
the normalizing condition \[
\phi-\phi_{0}\leq0.\]
 Assume that the Bergman kernel for $\phi_{0}$ satisfies

\begin{equation}
K_{\phi_{0}}e^{-\phi_{0}}\leq C_{1}(dd^{c}\phi_{0})^{n}/n!\label{eq:def of c1}\end{equation}
Then \[
\sup_{X}\log\frac{K_{\phi_{0}}}{K_{\phi}}\leq C_{1}\mathcal{E}(\phi_{t},\phi_{0}).\]
\end{prop}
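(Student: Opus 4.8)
The plan is to fix a point $x\in X$, prove the pointwise bound $\log\big(K_{\phi_{0}}(x)/K_{\phi}(x)\big)\le C_{1}\mathcal{E}(\phi,\phi_{0})$, and take the supremum over $x$ only at the end. The whole argument is run along the (weak) geodesic $\phi_{t}$, $t\in[0,1]$, from $\phi_{0}$ (at $t=0$) to $\phi$ (at $t=1$), in the sense recalled above. The key input from \cite{bern1} is that $t\mapsto\log K_{\phi_{t}}(x)$ is convex on $[0,1]$; a convex function lies above its tangent line at the left endpoint, so, writing $\dot{\phi}_{0}$ for the right derivative of $\phi_{t}$ at $t=0$ and since $\phi_{1}=\phi$,
\[
\log\frac{K_{\phi_{0}}(x)}{K_{\phi}(x)}\le-\frac{d}{dt}\Big|_{t=0^{+}}\log K_{\phi_{t}}(x)=\frac{1}{K_{\phi_{0}}(x)}\int_{X}(-\dot{\phi}_{0})\,\big|K_{\phi_{0}}(x,\cdot)\big|^{2}e^{-\phi_{0}},
\]
the last equality following from Lemma~\ref{lem:der of bergman} (for the weak geodesic one takes the one-sided derivative and passes to the limit along smooth approximants of $\phi_{t}$, as in \cite{berm1,bern2}).

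The normalisation $\phi-\phi_{0}\le0$ now enters. For each $z$ the map $t\mapsto\phi_{t}(z)$ is convex with endpoint values $\phi_{0}(z)$ and $\phi(z)$, so its right derivative at $t=0$ is at most the slope of the chord: $\dot{\phi}_{0}\le\phi-\phi_{0}\le0$, that is $-\dot{\phi}_{0}\ge0$ pointwise. We may therefore integrate a pointwise upper bound for $\big|K_{\phi_{0}}(x,y)\big|^{2}e^{-\phi_{0}(y)}$ against the non-negative density $-\dot{\phi}_{0}$ without reversing the inequality. By the Cauchy-Schwarz (positive-definiteness) inequality for the reproducing kernel, $\big|K_{\phi_{0}}(x,y)\big|^{2}\le K_{\phi_{0}}(x)K_{\phi_{0}}(y)$, and by the hypothesis (\ref{eq:def of c1}),
\[
\big|K_{\phi_{0}}(x,y)\big|^{2}e^{-\phi_{0}(y)}\le K_{\phi_{0}}(x)\,K_{\phi_{0}}(y)e^{-\phi_{0}(y)}\le C_{1}\,K_{\phi_{0}}(x)\,(dd^{c}\phi_{0})^{n}/n!
\]
as volume forms in $y$. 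Plugging this into the previous display and dividing by $K_{\phi_{0}}(x)$ gives
\[
\log\frac{K_{\phi_{0}}(x)}{K_{\phi}(x)}\le C_{1}\int_{X}(-\dot{\phi}_{0})\,(dd^{c}\phi_{0})^{n}/n!.
\]

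It remains to bound the right-hand side by $C_{1}\mathcal{E}(\phi,\phi_{0})$. By Lemma~\ref{lem:prop of energy kahler} the function $t\mapsto\mathcal{E}(\phi_{t},\phi_{0})$ is affine on $[0,1]$, and since $\mathcal{E}(\phi_{0},\phi_{0})=0$ its constant slope equals $\mathcal{E}(\phi,\phi_{0})$; together with the one-sided derivative estimate at $t=0^{+}$ from the same lemma this yields $\int_{X}(-\dot{\phi}_{0})(dd^{c}\phi_{0})^{n}/n!\le\mathcal{E}(\phi,\phi_{0})$. Hence $\log\big(K_{\phi_{0}}(x)/K_{\phi}(x)\big)\le C_{1}\mathcal{E}(\phi,\phi_{0})$ for every $x\in X$, and taking the supremum over $x$ proves the proposition.

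I expect the only real obstacle to be the rigorous justification, for the non-smooth geodesic $\phi_{t}$, of the two differentiation formulas at the endpoint $t=0$ (Lemmas~\ref{lem:der of bergman} and \ref{lem:prop of energy kahler} are stated for smooth curves): one needs that the one-sided derivative $\dot{\phi}_{0}$ exists (immediate from convexity of $t\mapsto\phi_{t}$), that it still satisfies $\dot{\phi}_{0}\le\phi-\phi_{0}$, and that the Bergman-kernel derivative formula survives the passage to the limit along smooth approximants of $\phi_{t}$ --- precisely the regularity input for the Monge-Amp\`{e}re geodesic provided in \cite{berm1,bern2}. The rest is the elementary chain above: the tangent-line bound from convexity of $\log K_{\phi_{t}}(x)$, Cauchy-Schwarz for the reproducing kernel, the hypothesis (\ref{eq:def of c1}), and affineness of the energy along geodesics.
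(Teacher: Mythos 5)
Your proof follows exactly the same strategy as the paper's: run a weak geodesic from $\phi_{0}$ to $\phi$, use Berndtsson's convexity of $t\mapsto\log K_{\phi_{t}}(x)$ to reduce to the endpoint derivative, evaluate that derivative via Lemma~\ref{lem:der of bergman}, exploit $\dot\phi_{0}\le\phi-\phi_{0}\le 0$ together with the reproducing-kernel Cauchy--Schwarz inequality and the hypothesis~\eqref{eq:def of c1}, and finish with the affineness of the energy along the geodesic from Lemma~\ref{lem:prop of energy kahler}. The only nominal difference is that you phrase the final step as ``convex function above its tangent'' where the paper uses ``concave $-\log K_{\phi_{t}}$ below its tangent,'' which is of course identical, and your concluding remarks correctly flag the same one-sided-derivative regularity issues for the weak geodesic that the paper itself delegates to \cite{berm1,bern2}.
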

\begin{proof}
Join $\phi_{0}$ and $\phi$ with a geodesic $\phi_{t}$ such that
$\phi_{1}=\phi$. By the previous lemma \[
-\frac{d}{dt}|_{t=0}\log K_{\phi_{t}}(x)=\int_{X}-\dot{\phi}_{0}\frac{|K_{\phi_{0}}(x,y)|^{2}}{K_{\phi_{0}}(x)}e^{-\phi_{0}}.\]
 Since $\phi_{t}$ is a geodesic, $\phi_{t}$ is convex in $t$, so
\[
\dot{\phi}_{0}\leq\phi-\phi_{0}\leq0.\]
 Hence, since by Cauchy's inequality \[
|K_{\phi_{0}}(x,y)|^{2}\leq K_{\phi_{0}}(x)K_{\phi_{0}}(y),\]
 \[
-\frac{d}{dt}|_{t=0}\log K_{\phi_{t}}(x)\leq\int_{X}-\dot{\phi}_{0}K_{\phi_{0}}(y)e^{-\phi_{0}},\]
 which in turn is dominated by \[
C_{1}\int_{X}-\dot{\phi}_{0}(dd^{c}\phi_{0})^{n}/n!\leq C_{1}\frac{d}{dt}|_{t=0}\mathcal{E}(\phi_{t},\phi_{0})\]
 by the definition of $C_{1}$ (formula \ref{eq:def of c1}) and Lemma
\ref{lem:prop of energy kahler} which also gives \[
\frac{d}{dt}|_{t=0}\mathcal{E}(\phi_{t},\phi_{0})=\mathcal{E}(\phi,\phi_{0}).\]
 Now we use that $f(t):-\log K_{\phi_{t}}$ is concave. Therefore
\[
f(1)-f(0)\leq f'(0)\]
 which means that \[
\log K_{\phi_{0}}-\log K_{\phi}\leq f'(0)\leq C_{1}\mathcal{E}(\phi,\phi_{0})\]
 which completes the proof.
\end{proof}
Now it only remains to convert this estimate of the Bergman kernel
to an estimate of the integral of $e^{-\phi}$. Here we use \begin{equation}
\int_{X}K_{\phi}e^{-\phi}=N:=\dim H^{0}(X,L+K_{X})\label{eq:integral of bergman is dim}\end{equation}
Let $C_{1}$ and $C_{2}$ be constants satisfying \begin{equation}
C_{2}dV\leq K_{\phi_{0}}e^{-\phi_{0}}\leq C_{1}(dd^{c}\phi_{0})^{n}/n!\label{eq:upper lower bounds on bergman}\end{equation}
where $dV$ is a fixed volume form on $X.$ Note that $L+K_{X}$ is
basepoint free precisely when $C_{2}$ can be taken to be strictly
positive.

By the previous proposition and \ref{eq:upper lower bounds on bergman}
we have for any $x$ in $X$ \begin{equation}
K_{\phi}\geq K_{\phi_{0}}e^{-C_{1}\mathcal{E}(\phi,\phi_{0})}\geq C_{2}e^{\phi_{0}}dVe^{-C_{1}\mathcal{E}(\phi,\phi_{0})},\label{eq:lower bd on bergman}\end{equation}
 so it follows that \[
\int_{X}e^{-(\phi-\phi_{0})}dV\leq C_{2}^{-1}Ne^{-C_{1}\mathcal{E}(\phi,\phi_{0})}.\]
 We collect this in the next theorem which, as explained below, implies
Theorem \ref{thm:aubin hyp intro} in the introduction.
\begin{thm}
\label{thm:main in text}Let $\phi_{0}$ be a semipositively curved
metric on the line bundle $L$ over the compact Kähler manifold $X$.

Assume that the Bergman kernel for $\phi_{0}$ satisfies \ref{eq:upper lower bounds on bergman}.
Then for any other semipositively curved metric on $L$, satisfying
\[
\phi-\phi_{0}\leq0.\]
 we have that \[
\log\int_{X}e^{-(\phi-\phi_{0})}dV\leq\log(N/C_{2})-C_{1}\mathcal{E}(\phi,\phi_{0}).\]
 
\end{thm}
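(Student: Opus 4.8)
The plan is to assemble the statement directly from the Proposition together with the two elementary ingredients displayed just above it: the reproducing identity $\int_X K_\phi e^{-\phi} = N$ and the two-sided pinching \ref{eq:upper lower bounds on bergman}. First I would apply the Proposition to the pair $\phi,\phi_0$ (note the normalization $\phi - \phi_0 \le 0$ is exactly the hypothesis of the Proposition), obtaining the pointwise lower bound
\[
K_\phi(x) \;\ge\; K_{\phi_0}(x)\, e^{-C_1 \mathcal{E}(\phi,\phi_0)}
\]
for every $x \in X$. Next I would feed in the left inequality of \ref{eq:upper lower bounds on bergman}, which says $K_{\phi_0} e^{-\phi_0} \ge C_2\, dV$, to get $K_\phi \ge C_2\, e^{\phi_0}\, dV \, e^{-C_1 \mathcal{E}(\phi,\phi_0)}$ (interpreting $dV$ as a density against the local trivialization, consistently with how $K_\phi$ is a metric on $K_X + L$). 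Multiplying by $e^{-\phi}$ and rearranging yields
\[
K_\phi\, e^{-\phi} \;\ge\; C_2\, e^{-(\phi-\phi_0)}\, dV \, e^{-C_1 \mathcal{E}(\phi,\phi_0)}.
\]

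Now I would integrate over $X$ and use the reproducing identity \ref{eq:integral of bergman is dim} on the left-hand side, which turns the integral of $K_\phi e^{-\phi}$ into the constant $N = \dim H^0(X, L+K_X)$. Since the factor $e^{-C_1 \mathcal{E}(\phi,\phi_0)}$ is independent of $x$, it pulls out of the integral, leaving
\[
N \;\ge\; C_2\, e^{-C_1 \mathcal{E}(\phi,\phi_0)} \int_X e^{-(\phi-\phi_0)}\, dV.
\]
Dividing by $C_2\, e^{-C_1 \mathcal{E}(\phi,\phi_0)}$ and taking logarithms gives exactly
\[
\log \int_X e^{-(\phi-\phi_0)}\, dV \;\le\; \log(N/C_2) - C_1 \mathcal{E}(\phi,\phi_0),
\]
which is the claimed inequality.

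There is essentially no hard analytic step here: the Proposition has already done the heavy lifting (the concavity of $-\log K_{\phi_t}$ along geodesics and Lemma \ref{lem:prop of energy kahler}), and what remains is bookkeeping. The only point requiring a little care is the identification of $K_{\phi_0} e^{-\phi_0}$, $e^{\phi_0} dV$ and $e^{-(\phi-\phi_0)} dV$ as genuine (trivialization-independent) densities on $X$ so that the pointwise inequalities and the integration make invariant sense; this is routine given the discussion of weights versus $\omega$-psh functions in the preliminaries. One should also observe that the statement is only nontrivial when $C_2 > 0$, i.e. when $L + K_X$ is basepoint free, which is precisely the hypothesis built into \ref{eq:upper lower bounds on bergman}. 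Finally, the passage from this theorem to Theorem \ref{thm:aubin hyp intro} — in particular the appearance of the power $k^{n+1}$ — comes by applying the present inequality to the metric $k\phi$ on $kL$ and invoking the Bergman kernel asymptotics that control how $C_1, C_2$ scale in $k$; that argument is carried out in the paragraph following the theorem rather than inside this proof.
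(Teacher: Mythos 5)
Your proof is essentially the paper's own: apply the Proposition to obtain the pointwise bound $K_\phi\ge K_{\phi_0}e^{-C_1\mathcal{E}(\phi,\phi_0)}$, feed in the left inequality of \ref{eq:upper lower bounds on bergman} to replace $K_{\phi_0}$ by $C_2 e^{\phi_0}\,dV$, multiply by $e^{-\phi}$, integrate, and invoke the reproducing identity $\int_X K_\phi e^{-\phi}=N$. One bookkeeping remark that you share with the paper's own text: dividing $N\ge C_2\,e^{-C_1\mathcal{E}(\phi,\phi_0)}\int_X e^{-(\phi-\phi_0)}dV$ by $C_2e^{-C_1\mathcal{E}(\phi,\phi_0)}$ literally produces $+C_1\mathcal{E}(\phi,\phi_0)$ on the right, and the minus sign in the stated theorem is consistent only once the Proposition's conclusion is read with the derivative-formula convention $\mathcal{E}(\phi,\phi_0)=\mathcal{E}_\omega(\phi-\phi_0)\le0$ (for which the Proposition's bound should be $-C_1\mathcal{E}(\phi,\phi_0)$), a sign ambiguity already present between the displayed definition of $\mathcal{E}(\phi,\phi_0)$ and its first-variation formula in section 2.1.
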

\bigskip{}

We say ( cf \cite{bern1b},\cite{berm1}) that the metric $\phi_{0}$
is \textit{balanced in the adjoint sense} if there is a constant $C$
such that \[
K_{\phi_{0}}e^{-\phi_{0}}=C(dd^{c}\phi_{0})^{n}/n!.\]
 When $dV:=(dd^{c}\phi_{0})^{n}/n!$ this amounts to saying that the
constants $C_{1}$ and $C_{2}$ in \ref{eq:upper lower bounds on bergman}
can be chosen equal, and integrating over $X$ we see that in this
case $C=N/V$. We thus immediately get the next corollary. 
\begin{cor}
\label{cor:balanc}With assumptions as in Theorem 2.3, assume in addition
that $\phi_{0}$ is balanced in the adjoint sense. Then \[
\log\int_{X}e^{-(\phi-\phi_{0})}(dd^{c}\phi_{0})^{n}/n!\leq\frac{N}{V}\mathcal{E}(\phi,\phi_{0}).\]
 
\end{cor}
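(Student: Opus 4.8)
The plan is to obtain Corollary~\ref{cor:balanc} simply by running the argument behind Theorem~\ref{thm:main in text} with the reference volume form taken to be $(dd^{c}\phi_{0})^{n}/n!$: the balancing hypothesis forces the comparison constant appearing there to equal $N/V$, and this is what produces the sharp multiplicative constant in the assertion. No new analytic ingredient beyond what has already been proved is needed.

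First I would observe that, with $dV:=(dd^{c}\phi_{0})^{n}/n!$, the two-sided Bergman estimate \ref{eq:upper lower bounds on bergman} becomes $C_{2}\,(dd^{c}\phi_{0})^{n}/n!\le K_{\phi_{0}}e^{-\phi_{0}}\le C_{1}\,(dd^{c}\phi_{0})^{n}/n!$, so that ``$\phi_{0}$ is balanced in the adjoint sense'', i.e. $K_{\phi_{0}}e^{-\phi_{0}}=C\,(dd^{c}\phi_{0})^{n}/n!$ for a constant $C$, is precisely the statement that here one may take $C_{1}=C_{2}=C$; in particular the hypothesis of Theorem~\ref{thm:main in text} holds (and $\phi-\phi_{0}\le 0$ is assumed, exactly as there). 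Integrating the balancing identity over $X$, using \ref{eq:integral of bergman is dim} on the left and $\int_{X}(dd^{c}\phi_{0})^{n}/n!=V$ on the right, gives $N=CV$, hence $C=N/V$. Next I apply the Proposition preceding Theorem~\ref{thm:main in text} with $C_{1}=N/V$: for every $x\in X$ one gets $\log\big(K_{\phi_{0}}(x)/K_{\phi}(x)\big)\le\frac{N}{V}\mathcal{E}(\phi,\phi_{0})$, i.e. $K_{\phi_{0}}\le e^{\frac{N}{V}\mathcal{E}(\phi,\phi_{0})}K_{\phi}$ pointwise on $X$.

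It then only remains to convert this pointwise bound into the integral inequality, just as in the passage deducing Theorem~\ref{thm:main in text}. Using the balancing identity to write $(dd^{c}\phi_{0})^{n}/n!=\frac{V}{N}K_{\phi_{0}}e^{-\phi_{0}}$ and then the previous estimate,
\[
\int_{X}e^{-(\phi-\phi_{0})}(dd^{c}\phi_{0})^{n}/n!=\frac{V}{N}\int_{X}K_{\phi_{0}}e^{-\phi}\le\frac{V}{N}e^{\frac{N}{V}\mathcal{E}(\phi,\phi_{0})}\int_{X}K_{\phi}e^{-\phi}=Ve^{\frac{N}{V}\mathcal{E}(\phi,\phi_{0})},
\]
the last equality being \ref{eq:integral of bergman is dim}. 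Taking logarithms yields $\log\int_{X}e^{-(\phi-\phi_{0})}(dd^{c}\phi_{0})^{n}/n!\le\log V+\frac{N}{V}\mathcal{E}(\phi,\phi_{0})$, and since $(dd^{c}\phi_{0})^{n}/n!$ has total mass $V$, dividing by $V$ inside the logarithm (i.e. normalizing the reference measure to unit mass) gives exactly the inequality of the corollary.

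Because the whole argument is a specialization of Theorem~\ref{thm:main in text}, there is no genuine obstacle here. The two points that call for a little care are: (i) recognizing that ``balanced in the adjoint sense'' is precisely the equality case $C_{1}=C_{2}$ of \ref{eq:upper lower bounds on bergman} once $dV$ is chosen to be $(dd^{c}\phi_{0})^{n}/n!$; and (ii) the elementary bookkeeping that pins the common constant to $C=N/V$ via $\int_{X}K_{\phi_{0}}e^{-\phi_{0}}=N$ and $\int_{X}(dd^{c}\phi_{0})^{n}/n!=V$, which is also what accounts for the additive term $\log V$, equivalently for the normalization of the reference measure in the statement.
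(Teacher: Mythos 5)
Your proposal follows the paper's own proof exactly: it specializes Theorem \ref{thm:main in text} by taking $dV=(dd^{c}\phi_{0})^{n}/n!$, observes that the balancing hypothesis is precisely the statement that $C_{1}=C_{2}$ in \ref{eq:upper lower bounds on bergman}, and then pins the common constant to $N/V$ by integrating $K_{\phi_{0}}e^{-\phi_{0}}=C\,(dd^{c}\phi_{0})^{n}/n!$ over $X$ and using \ref{eq:integral of bergman is dim} together with $\int_{X}(dd^{c}\phi_{0})^{n}/n!=V$. This is word-for-word what the paper does before stating the corollary, so there is no genuinely different route here; you are merely unwinding the deduction. One small but pertinent point: your calculation produces the additive term $\log V$ on the right-hand side, which the paper's stated corollary silently omits (indeed plugging $C_{1}=C_{2}=N/V$ into Theorem \ref{thm:main in text} also gives $\log V$ as the additive constant). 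You notice this and absorb it by normalizing the reference measure $(dd^{c}\phi_{0})^{n}/n!$ to have unit mass; that is the right reading, and it is worth recording explicitly since the paper's formula as printed is only consistent with that normalization (the same convention is implicit in the subsequent discussion of the $L=-K_{X}$ case, where the resulting inequality is stated after this normalization is in effect). Aside from this bookkeeping, your argument is correct and coincides with the paper's.
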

As an example of this, let us look at the case $L=-K_{X}$. Then $H^{0}(X,K_{X}+L)=\C,$
i.e. $N=1,$ and \[
K_{\phi_{0}}(x)=1/\int_{X}e^{-\phi_{0}}.\]
 Hence the condition that $\phi_{0}$ be balanced in the adjoint sense
means that \[
(dd^{c}\phi_{0})^{n}/(Vn!)=(\int_{X}e^{-\phi_{0}})^{-1}e^{-\phi_{0}}\]
 which means that $\phi_{0}$ is the potential of a Kähler-Einstein
metric. Then the corollary becomes \[
\log\int_{X}e^{-\phi}\leq\log\int_{X}e^{-\phi_{0}}+\mathcal{E}(\phi,\phi_{0})\]
 since $N=1$. This is the Moser-Trudinger inequality first proved
in \cite{d-t} (using a different method). Note that the assumption
that $\phi\leq\phi_{0}$ is unnecessary here since both sides scale
the same way if we subtract a constant from $\phi$.

\bigskip{}

\subsubsection{Proof of Theorem \ref{thm:aubin hyp intro}}

In a similar vein we can consider asymptotic versions of Theorem 2.3,
when we replace $L$ be $kL$, with $k$ a large integer. Then it
follows from well-known Bergman kernel asymptotics due to Bouche and
Tian (see \cite{z} and references therein for various refinements)
that for any fixed smooth and strictly positively curved $\phi_{0}$
\begin{equation}
K_{k\phi_{0}}e^{-k\phi_{0}}=(dd^{c}k\phi_{0})^{n}/n!(1+O(k^{-1}))\label{eq:bergman kernel as}\end{equation}
Hence in \ref{eq:upper lower bounds on bergman} we can take both
$C_{1}$ and $C_{2}$ equal to \[
1+O(k^{-1})\]
 Integrating \ref{eq:bergman kernel as} we also get the well known
formula \[
N_{k}=Vk^{n}+o(k^{n-1})\]
 for the dimension of the space of global sections of $K_{X}+kL$.
Altogether this finishes the proof of Theorem \ref{thm:aubin hyp intro}.

\subsection{Uniformity over all Fanos (proof of Theorem \ref{thm:uniform fano intro})}

We start with the following essentially well-known lemma which is
proved using Moser iteration (see Thm. 7 in \cite{li} which is stated
for the equality case in \ref{eq:moser iteration bound}, but the
proof in general is the same) 
\begin{lem}
Let $(X,g)$ be a Riemannian manifold of real dimension $2n>2$ and
let $a_{g}$ and $b_{g}$ be constants such that the following Sobolev
inequality holds for any function $F$ on $X$ such that $F$ and
its gradient are  in $L^{2}:$ \[
(\int_{X}|F|^{2\sigma}dV_{g})^{1/\sigma}\leq\left(a_{g}\int_{X}|\nabla_{g}F|^{2}dV_{g})+b_{g}\int_{X}|F|^{2}dV_{g}\right),\,\,\,\sigma=n/(n-1)\]
For any positive function $H$ such that $\Delta_{g}H\geq-\lambda H$
there is a constant $C{}_{g}$ only depending on $a_{g}$ and $b_{g}$
such that \begin{equation}
\left\Vert H\right\Vert _{L^{\infty}(X)}\leq C{}_{g}\lambda^{n}\left\Vert H\right\Vert _{L^{1}(X,g)}\label{eq:moser iteration bound}\end{equation}

\end{lem}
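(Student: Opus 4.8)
The statement to be proved is the Sobolev-to-sup-norm estimate \eqref{eq:moser iteration bound} via Moser iteration, under the subelliptic hypothesis $\Delta_g H \ge -\lambda H$ for a positive function $H$. The strategy is the classical De Giorgi--Nash--Moser iteration scheme adapted to the bound that is to be obtained, with the parameter $\lambda$ tracked explicitly through the iteration so that the final constant has the claimed $\lambda^n$ dependence. The plan is to first reduce to the normalized situation $\|H\|_{L^1(X,g)}=1$ (everything is homogeneous of degree one in $H$) and, by replacing $H$ with $H+\epsilon$ and letting $\epsilon\to 0$ at the end, to assume $H$ is bounded below by a positive constant so that all powers $H^p$ are legitimate test objects.

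\textbf{Key steps.} First I would derive the basic Caccioppoli-type inequality: multiply $\Delta_g H \ge -\lambda H$ by $H^{p-1}$ (for $p\ge 1$) and integrate by parts over the closed manifold $X$ (no boundary terms), obtaining
\[
\frac{4(p-1)}{p^2}\int_X |\nabla_g (H^{p/2})|^2\, dV_g \le \lambda \int_X H^p\, dV_g .
\]
Second, feed $F = H^{p/2}$ into the Sobolev inequality hypothesized in the lemma; combining with the Caccioppoli bound gives
\[
\Big(\int_X H^{p\sigma}\, dV_g\Big)^{1/\sigma} \le \Big(a_g\frac{p^2\lambda}{4(p-1)} + b_g\Big)\int_X H^p\, dV_g \le C(a_g,b_g)\, p\,\lambda \int_X H^p\, dV_g,
\]
where I use $\lambda \ge$ some fixed positive constant (or simply absorb $b_g$, noting the interesting regime is $\lambda$ large; for $\lambda$ bounded the statement is weaker and follows a fortiori) and $p/(p-1)$ is bounded for $p\ge 2$. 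Third, iterate: set $p_j = 2\sigma^j$, so the inequality reads $\|H\|_{L^{p_{j+1}}}^{p_j} \le (C p_j \lambda)\, \|H\|_{L^{p_j}}^{p_j}$, i.e. $\|H\|_{L^{p_{j+1}}} \le (C p_j \lambda)^{1/p_j}\|H\|_{L^{p_j}}$. Taking the product over $j\ge 0$ and using $\sum_j 1/p_j = \frac{1}{2}\cdot\frac{\sigma}{\sigma-1} = \frac{n}{2}$ and $\sum_j \frac{\log p_j}{p_j} < \infty$, I get
\[
\|H\|_{L^\infty(X)} \le C_g\, \lambda^{\,n/2}\cdot\Big(\prod_j p_j^{1/p_j}\Big)\,\|H\|_{L^2(X,g)} \le C_g'\, \lambda^{\,n/2}\,\|H\|_{L^2(X,g)} .
\]
Fourth, to replace $\|H\|_{L^2}$ by $\|H\|_{L^1}$ on the right-hand side, I would interpolate: $\|H\|_{L^2}^2 \le \|H\|_{L^\infty}\|H\|_{L^1}$, so the estimate becomes $\|H\|_\infty \le C_g'\lambda^{n/2}(\|H\|_\infty\|H\|_1)^{1/2}$, which after squaring and cancelling one power of $\|H\|_\infty$ yields $\|H\|_{L^\infty(X)} \le C_g''\,\lambda^{n}\,\|H\|_{L^1(X,g)}$, exactly \eqref{eq:moser iteration bound}. (This interpolation trick is what upgrades the exponent from $n/2$ to $n$ and simultaneously trades $L^2$ for $L^1$.)

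\textbf{Main obstacle.} The routine analytic content (Caccioppoli, feeding into Sobolev, summing the geometric-type series $\sum \sigma^{-j}$ and $\sum j\sigma^{-j}$) is standard; the only genuinely delicate points are \emph{bookkeeping} ones. First, one must check the first iteration step starting from $p=2$ is legitimate — the factor $(p-1)^{-1}$ in Caccioppoli blows up as $p\to 1$, so one cannot start the iteration at $p=1$; this is precisely why the $L^2\to L^\infty$ estimate comes out naturally and the passage to $L^1$ must be done by interpolation at the end. Second, one must track that the accumulated constant $\prod_j (C p_j)^{1/p_j}$ converges and depends only on $a_g, b_g$ (through $C$) and on $n$ (through $\sigma$), not on $\lambda$ — this is where the claimed $\lambda^n$ homogeneity is verified. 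Third, the approximation $H \rightsquigarrow H+\epsilon$ must be handled so that the differential inequality is preserved (it is, since $\Delta_g(H+\epsilon) = \Delta_g H \ge -\lambda H \ge -\lambda(H+\epsilon)$) and so that one can pass to the limit in the final integral estimate by monotone/dominated convergence. None of these presents a real conceptual difficulty; the reference to Li's Theorem 7 in \cite{li} supplies the template, and the author's parenthetical remark confirms the proof is ``the same'' as in the equality case.
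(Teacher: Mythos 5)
Your Moser-iteration argument is correct and is precisely the proof that the paper delegates to Li's Theorem~7 in \cite{li}: multiply by $H^{p-1}$ and integrate by parts for the Caccioppoli bound, plug $F=H^{p/2}$ into the hypothesized Sobolev inequality, iterate with $p_j=2\sigma^j$ (whose reciprocals sum to $\frac{1}{2}\cdot\frac{\sigma}{\sigma-1}=\frac{n}{2}$, producing $\lambda^{n/2}\|H\|_{L^2}$), and finally use the elementary interpolation $\|H\|_{L^2}^2\leq\|H\|_{L^\infty}\|H\|_{L^1}$ to cancel a power of $\|H\|_{L^\infty}$ and upgrade to $\lambda^{n}\|H\|_{L^1}$. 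You also correctly identify the one bookkeeping subtlety -- absorbing $b_g$ into the $p\lambda$ factor requires $\lambda$ bounded away from zero -- which is harmless here since the lemma is invoked with $\lambda=n(k-\delta)\to\infty$; a cleaner way to remove the caveat is to run the iteration with $\lambda':=\max(\lambda,b_g/a_g)$ so that $a_g\frac{p^2\lambda}{4(p-1)}+b_g\leq a_g\frac{p^2}{4(p-1)}\lambda'$ for $p\geq 2$, giving $\|H\|_{L^\infty}\leq C_g\max(\lambda,b_g/a_g)^n\|H\|_{L^1}$ with $C_g$ depending only on $a_g,b_g$.
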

Let us now assume that $L\rightarrow X$ is a an ample line bundle
with a fixed positively curved weight $\phi_{0}$ such that the Kähler
form $\omega_{0}:=dd^{c}\phi_{0}$ has a lower bound $\delta$ on
its Ricci curvature: \begin{equation}
\mbox{Ric }\omega_{0}\geq\delta\omega_{0}\label{eq:lower bd ricci}\end{equation}
Then we claim that there is a constant $C_{\delta}$ only depending
on $\delta$ such that the Bergman kernel $K_{k\phi_{0}}(x)$ of the
space $H^{0}(kL+K_{X})$ has the following point-wise upper bound:
\begin{equation}
K_{k\phi_{0}}\leq C_{\delta}k^{n}(dd^{c}\phi_{0})^{n}/n!\label{eq:upper bd bergman by ricci}\end{equation}
To see this let $g$ be the Riemannian metric on $X$ corresponding
to $\omega_{0}.$ By \cite{il} the corresponding constants $a_{g}$
and $b_{g}$ only depend on the lower bound $\delta$ of the Ricci
curvature of $g$ the lower bound on the volume $V_{g}$ of $g:$
\[
a_{g}:=\frac{2n-1}{n(n-1)\delta V^{1/n}},\,\,\, b_{g}=\frac{1}{V^{1/n}}\]
 Let now $f_{k}$ be an element in $H^{0}(kL+K_{X})$ and write \[
H:=|f_{k}|^{2}e^{-k\phi_{0}}/((dd^{c}\phi_{0})^{n}/n!)\]
Then it follows immediately from the definition of Ricci curvature
and the fact that $\log|f_{k}|^{2}$ is locally psh that \[
dd^{c}\log H\geq-k\omega_{0}-\delta\omega_{0}\]
 and hence $dd^{c}H\geq-(k-\delta)H\omega_{0}.$ Applying the previous
Lemma to $H$ with $\lambda:=n(k-\delta)$ now gives \[
|f_{k}|^{2}e^{-k\phi_{0}}\leq C\delta k^{n}\frac{(dd^{c}\phi_{0})^{n}}{n!}\int_{X}|f_{k}|^{2}e^{-k\phi_{0}}.\]
 By the extremal definition of $K_{k\phi_{0}}$ this finally proves
the inequality \ref{eq:upper bd bergman by ricci}.

Let us now assume that $X$ is a Fano manifold and take $L:=-K_{X}$
so that $V:=c_{1}(-K_{X})^{n}/n!.$ As shown by Tian-Yau \cite{t-y}
one may always choose $\omega:=\omega_{0}\in c_{1}(-K_{X})$ so that
$1/\delta$ in \ref{eq:lower bd ricci} only depends on an upper bound
on $V$ (since changing $\phi_{0}$ only changes the additive constant
$B_{k}$ we are allowed to choose $\phi_{0}$ and $dV$). As later
shown in \cite{ca,kmm} the volume $V$ of an $n-$dimensional Fano
has a universal bound $V\leq c_{n}$ and hence $\phi_{0}$ may be
chosen so that the Bergman kernel estimate \ref{eq:upper bd bergman by ricci}
holds with a constant $C_{\delta}$ only depending on the dimension
$n.$ The proof of Theorem \ref{thm:uniform fano intro} is now concluded
by invoking Theorem \ref{thm:main in text}.
\begin{rem}
One may also ask whether there is universal\emph{ lower }bound on
$\inf_{X}(K_{k\phi}e^{-k\phi}/(dd^{c}\phi)^{n})$ in terms of a positive
lower bound $\delta$ of the Ricci curvature of $dd^{c}\phi$ and
the dimension $n$ of the Fano manifold? If one instead considers
the Bergman kernel $\tilde{K}_{k\phi}$ defined wrt the $L^{2}-$norm
$\int_{X}|f|^{2}e^{-k\phi}(dd^{c}\phi)^{n}$ on $H^{0}(X,kK)$ then
a lower bound for $\tilde{K}_{k\phi}e^{-k\phi}$ was obtained by Tian
\cite{ti2} when $n=2$ (for all $\phi=\phi_{t}$ appearing in Aubin's
continuity path) and it forms a crucial role in the proof in \cite{ti2}
of the Calabi conjecture on Fano (Del Pezzo) surfaces. 
\end{rem}

\subsection{\label{sub:Vanishing-Lelong-numbers}Vanishing Lelong numbers (proof
of Cor \ref{cor:no lelong intro})}

Let now $\omega$ be a semi-positive form with positive volume and
$L$ a semi-positive and big line bundle (i.e. $V>0)$ with $c_{1}(L)=[\omega].$
Take as before $\phi_{0}\in\mathcal{H}_{L}$ with curvature form $\omega\geq0.$
In case $kL+K_{X}$ is semi-ample for all $k\geq k_{0}$ we have,
by definition, that $K_{k\phi_{0}}>0$ on all of $X$ and hence $C_{2}>0$
so that Cor \ref{cor:no lelong intro} follows immediately from Theorem
\ref{thm:main in text}. In the general case we proceed as follows.
Given $\phi$ a locally bounded weight with $dd^{c}\phi\geq0$  \ref{eq:lower bd on bergman}
gives

\begin{equation}
e^{C_{1}C}\int_{X}K_{k\phi_{0}}e^{-k\phi}\leq\int_{X}K_{k\phi}e^{-k\phi}=N_{k}\label{eq:proof of lelong}\end{equation}
Next, we claim that there is a holomorphic section $s_{E}$ of a holomorphic
line bundle $E\rightarrow X$ with a smooth weight $\phi_{E}$ such
that \begin{equation}
|s_{E}|^{2}e^{-\phi_{E}}/C'\leq K_{k\phi_{0}}e^{-k\phi_{0}}\label{eq:lower bd on sing berman ker}\end{equation}
for some constant $C'.$ This is an essentially well-known consequence
of the Ohsawa-Takegoshi-Manivel extension theorem proved as follows.
Since $L$ is big we may, by Kodaira's lemma, decompose $k_{0}L=A+E$
for $A$ ample (positive) and $E$ effective, i.e. $A$ admits a positively
curved weight $\phi_{A}$ and $E$ admits a holomorphic section $s_{E}.$
Then $\psi:=\phi_{A}+\log|s_{E}|^{2}$ defines a positively curved
weight on $L$ such that its curvature $dd^{c}\psi\geq dd^{c}\phi_{A}:=\omega_{A}$
is a Kähler current. We also fix a smooth weight $\phi_{K_{X}}$on
$K_{X}.$ By the Ohsawa-Takegoshi-Manivel extension theorem  $k_{0}$
may be chosen sufficiently large so that for any fixed point $x\in X$
there is a holomorphic section $f_{k}\in H^{0}(kL+K_{X})$ such that\[
|f_{k}|^{2}(x)e^{-((k-k_{0})\phi_{0}+\psi+\phi_{K_{X}}}=1,\,\,\,\int_{X}|f_{k}|^{2}e^{-((k-k_{0})\phi_{0}+\psi}\leq C''\]
for an absolute constant $C''.$ Since, we may after, subtracting
a constant, assume that $\psi\leq\phi_{0}$ we can replace the $\psi$
in the inequality above with $\phi_{0}$ and hence \ref{eq:lower bd on sing berman ker}
follows from the extremal property of the Bergman kernel.

Combining \ref{eq:proof of lelong} and \ref{eq:upper lower bounds on bergman}
now gives the existence of a constant $C$ such that \begin{equation}
\int_{X}|s_{E}|^{2}e^{\phi_{0}-\phi_{E}}e^{-k\phi}\leq CN_{k}e^{-C\mathcal{E}(\phi,\phi_{0})}\label{eq:dege mos-tr}\end{equation}
 for any fixed $k$ and $\phi$ as above with $\sup(\phi-\phi_{0})=0.$
Hence, if $\phi$ is a weight of finite energy it follows from the
definition that the integral in the lhs above is finite. But then
it follows from a simple local argument that a local representative
of $\phi$ cannot have any Lelong numbers at given point $x$ in $X.$
Indeed, if $\phi$ had Lelong number $l>0$ at $z=0$ in local coordinates
$z$ on a small ball $\mathcal{B},$ then $\phi\leq l\log|z|^{2}+C.$
If we now blow-up the point $z=0$ and denote by $s$ the section
cutting out the exceptional divisor $E_{0}$ on the blow-up $\mathcal{B}_{0}$
of $\mathcal{B}$ we get \[
\int_{\mathcal{B}_{0}}|s|^{2m}|s|^{-2(lk-(n-1))}<\infty\]
where $m$ is the order of vanishing of $s_{E}$ along $E_{0}.$ Hence,
taking $k$ sufficiently large (i.e. so that $kl>m+n-1)$ finally
yields the desired contradiction, showing that $\phi$ has no Lelong
numbers, which by Skoda's result is equivalent to $e^{-k\phi}\in L_{loc}^{1}$
for any $k\geq0$ (see for example \cite{d-k}). Finally, since $\{\mathcal{E}\geq-C\}\cap\{\sup_{X}=0\}$
is a compact set in $PSH(X,\omega)$ where along Lelong numbers vanish
identically the last statement of the corollary follows from either
from the uniform version of Skoda's theorem in \cite{zer} (just as
in the proof of Lemma 6.4 in \cite{bbgz}) or alternatively the semi-continuity
of complex integrability exponents established in \cite{d-k}.

\section{\label{sec:M-T ineq in ball under invariance-1}Moser-Trudinger inequality
in the ball under $S^{1}-$invariance}

In this section we will look at estimates for integrals of $e^{-\phi}$,
where $\phi$ is plurisubharmonic in a domain in $\C^{n}$. For simplicity
we will treat only the ball $\mathcal{B}$. As in the previous section
we let $K_{\phi}(x)$ be the Bergman kernel at the diagonal for the
plurisubharmonic weight function $\phi$. It follows from the results
in \cite{bern1} that $\log K_{\phi_{t}}(x)$ is convex in $t$ if
$t\rightarrow\phi_{t}$ is a geodesic in the space of plurisubharmonic
functions in $\B$ (see below).

We say that a function $f$ is $S^{1}$-invariant if $f(e^{i\theta}z)=f(z)$.
(Here $e^{i\theta}$ acts diagonally so that $e^{i\theta}(z_{1},...z_{n}):=(e^{i\theta}z_{1},....e^{i\theta}z_{n})$).
We also say that a domain is $S^{1}$-invariant if $e^{i\theta}z$
lies in the domain if $z$ does.

\subsection{Bergman kernels and plurisubharmonic variations}
\begin{prop}
\label{pro:bergman kernel under s one inv}Assume $\phi$ is plurisubharmonic
in an $S^{1}$ invariant (connected) domain that contains the origin
and that $\phi$ is also $S^{1}$-invariant. Then \[
K_{\phi}(0,\zeta)=1/\int e^{-\phi}\]
 for all $\zeta$ in the ball. \end{prop}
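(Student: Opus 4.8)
The plan is to compute the Bergman kernel $K_\phi(0,\zeta)$ explicitly by exploiting the $S^1$-symmetry to diagonalize the $L^2$-inner product into monomial components. First I would observe that the monomials $z^\alpha = z_1^{\alpha_1}\cdots z_n^{\alpha_n}$ form an orthogonal family for the weighted inner product $\langle f,g\rangle_\phi = \int f\bar g\,e^{-\phi}$: indeed, since $\phi(e^{i\theta}z) = \phi(z)$, substituting $z\mapsto e^{i\theta}z$ shows $\int z^\alpha \bar z^\beta e^{-\phi} = e^{i\theta(|\alpha|-|\beta|)}\int z^\alpha\bar z^\beta e^{-\phi}$ for every $\theta$, forcing the integral to vanish unless $|\alpha|=|\beta|$; a further averaging argument over the torus (or over the diagonal $S^1$ again, after noting that finer invariance is not available, so one must be slightly careful — see below) handles the case $|\alpha|=|\beta|$, $\alpha\neq\beta$. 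Actually, since only the \emph{diagonal} $S^1$ acts, I only get orthogonality between monomials of different total degree; but that is exactly what is needed, because I only care about the value at $\zeta$ of the reproducing kernel \emph{evaluated against the constant function}, or rather about $K_\phi(0,\zeta)$, which by the reproducing property equals $\overline{e_0(\zeta)}$-type data where $e_0$ is the unit-norm reproducing element at $0$.

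The key point is that evaluation at $0$ kills every monomial except the constant: $f(0)=c_0$ if $f=\sum c_\alpha z^\alpha$. Hence in the extremal/reproducing description, $K_\phi(0,\cdot)$ is represented by the orthogonal projection of the constant function onto... more precisely, $K_\phi(z,0) = \overline{K_\phi(0,z)}$ reproduces values at $0$, so $K_\phi(\cdot,0)$ must be the (unique up to the normalization) holomorphic function $h$ with $\langle f,h\rangle_\phi = f(0)$ for all $f$. Writing $h=\sum c_\alpha z^\alpha$ and testing against $f=z^\beta$ gives $\sum_\alpha c_\alpha \langle z^\beta, z^\alpha\rangle_\phi = \delta_{\beta,0}$. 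If $1 \perp z^\alpha$ for all $\alpha\neq 0$ — which holds because $|\alpha| \neq 0 = |0|$ whenever $\alpha\neq0$, so degree-orthogonality alone suffices! — then $h = c_0$ is constant, $c_0 \langle 1,1\rangle_\phi = 1$, i.e. $c_0 = 1/\int e^{-\phi}$. Therefore $K_\phi(0,\zeta) = \overline{h(0)}\cdot$(appropriate convention) $= 1/\int e^{-\phi}$ for all $\zeta$, using that $h$ is the constant function so its value does not depend on $\zeta$.

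The steps in order: (1) record the $S^1$-action substitution and deduce $\langle 1, z^\alpha\rangle_\phi = 0$ for $\alpha\neq 0$ from $|\alpha|\neq 0$; (2) recall the defining reproducing property of the off-diagonal Bergman kernel $K_\phi(\cdot,\zeta)$, namely $f(\zeta) = \langle f, K_\phi(\cdot,\zeta)\rangle_\phi$; (3) specialize $\zeta=0$ and expand $K_\phi(\cdot,0)$ in the orthogonal monomial basis, solving for the coefficients to get that it is the constant $1/\int e^{-\phi}$; (4) conclude $K_\phi(0,\zeta) = \overline{K_\phi(\zeta,0)} = 1/\int e^{-\phi}$, which is real. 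I expect the only genuine subtlety — the \textbf{main obstacle} — to be bookkeeping: making sure the space $H^0$ in question (holomorphic $L^2(e^{-\phi})$ functions on the domain, or the relevant weighted Bergman space) really does contain the constants and that monomials span a dense subspace, so that the orthogonal expansion argument is legitimate. On a bounded $S^1$-invariant domain containing $0$ with $\phi$ locally bounded this is standard (power series expansion on a ball around $0$, plus density of polynomials), but one should state the hypotheses under which $\int e^{-\phi} < \infty$ so the formula is meaningful; this is where a line of care, rather than a line of computation, is needed.
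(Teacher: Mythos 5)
Your proof is correct and follows essentially the same route as the paper's: both boil down to (a) using $S^1$-invariance to show $K_\phi(\cdot,0)$ has only a constant component, and (b) normalizing via the reproducing identity $\int K_\phi(0,\cdot)\,e^{-\phi}=1$. The paper phrases (a) more economically (uniqueness of the kernel $\Rightarrow$ $K_\phi(0,\zeta)$ is $S^1$-invariant and antiholomorphic in $\zeta$, hence constant), whereas you reach the same conclusion by expanding in monomials and using that $1\perp z^\alpha$ for $\alpha\neq 0$; this is a notational variant of the same observation, not a genuinely different argument.
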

\begin{proof}
By definition, $K_{\phi}(0,\zeta)$ is antiholomorphic in $\zeta$
and by uniqueness of Bergman kernels it must also be $S^{1}$-invariant.
Hence it is a constant, and since \[
\int1K_{\phi}(0,\cdot)e^{-\phi}=1\]
 the proposition follows. 
\end{proof}
The next proposition then follows immediately from the plurisubharmonic
variation of Bergman kernels (cf \cite{bern1}). 
\begin{prop}
\label{pro:psh variation of integrals}Let $\phi_{t}$ be a subgeodesic
of $S^{1}$-invariant plurisubharmonic functions in the disk. Then
\[
t\mapsto\log(\int e^{-\phi_{t}})\]
 is concave. 
\end{prop}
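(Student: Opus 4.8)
\textbf{Proof plan for Proposition \ref{pro:psh variation of integrals}.}
The plan is to deduce this directly from the known plurisubharmonic variation of Bergman kernels, combined with the identification provided by Proposition \ref{pro:bergman kernel under s one inv}. First I would set up the correct total space: the subgeodesic $\phi_t$, where $t$ ranges over a strip $\mathcal{T}=[0,1]+i\R$ in $\C$ (or, after imposing $S^1$-symmetry in $t$, an annulus $\mathcal{A}$), defines an $S^1$-invariant plurisubharmonic weight $\Phi(z,t):=\phi_t(z)$ on the pullback of the trivial bundle to $\mathcal{B}\times\mathcal{T}$. Here $S^1$-invariance is meant in the $z$-variable only, and it is preserved under the subgeodesic construction because the defining envelope is taken over a class of metrics that is itself invariant under the diagonal $S^1$-action on $z$ (and the regularized-envelope/upper-semicontinuous-regularization operations commute with this isometric action). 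By Berndtsson's theorem (the result from \cite{bern1} quoted in the text), for each fixed $x\in\mathcal{B}$ the function $t\mapsto\log K_{\phi_t}(x)$ is plurisubharmonic in $t$, hence in particular convex when restricted to the real segment $[0,1]$.

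Next I would specialize to $x=0$. By Proposition \ref{pro:bergman kernel under s one inv}, applied to each $\phi_t$ (which is $S^1$-invariant and plurisubharmonic on the $S^1$-invariant domain $\mathcal{B}$ containing the origin), we have
\[
K_{\phi_t}(0)=\frac{1}{\int_{\mathcal{B}} e^{-\phi_t}}.
\]
Therefore
\[
\log K_{\phi_t}(0)=-\log\int_{\mathcal{B}} e^{-\phi_t},
\]
and the convexity of $t\mapsto\log K_{\phi_t}(0)$ on $[0,1]$ is exactly the concavity of $t\mapsto\log\int_{\mathcal{B}} e^{-\phi_t}$, which is the assertion.

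\textbf{Main obstacle.} The delicate point is not the final one-line computation but the regularity/approximation bookkeeping needed to legitimately invoke the plurisubharmonic variation result for a \emph{weak} subgeodesic $\phi_t$ (which is only locally bounded, not smooth), and to know that Proposition \ref{pro:bergman kernel under s one inv} applies at this level of regularity, i.e. that $K_{\phi_t}(0)$ is finite and positive and that the reproducing identity $\int_{\mathcal{B}} K_{\phi_t}(0,\cdot)e^{-\phi_t}=1$ still holds. I would handle this exactly as in the compact case: observe that it suffices to prove the concavity for smooth strictly plurisubharmonic $\phi_0,\phi_1$ and then pass to the general case by an approximation argument (decreasing sequences of smooth test functions as recalled in the preliminaries), using continuity of $\mathcal{E}$ and of the relevant integrals under decreasing limits; alternatively one notes that the barrier \eqref{eq:barrier in compact case}-type construction guarantees $\phi_t$ is locally bounded with the right boundary behaviour, which is enough for the Bergman-kernel machinery of \cite{bern1} to go through. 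Once this technical layer is in place, concavity follows immediately.
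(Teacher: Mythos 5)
Your proposal is correct and follows exactly the paper's argument: combine Proposition \ref{pro:bergman kernel under s one inv}, which identifies $K_{\phi_t}(0)=1/\int e^{-\phi_t}$, with Berndtsson's theorem that $t\mapsto\log K_{\phi_t}(0)$ is convex along subgeodesics, so that $\log\int e^{-\phi_t}=-\log K_{\phi_t}(0)$ is concave. The paper leaves this as a one-line remark, while you additionally spell out the $S^1$-invariance and regularity bookkeeping, which is a reasonable elaboration but not a different route.
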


\subsection{Energy and geodesics}

In this section we will adapt the results about geodesics and energy
in the compact Kähler setting to the setting of domains. In principle
all the previous properties go through in this latter setting. The
main technical difference is that one has to be a bit careful when
performing integration by parts, due to the presence of the boundary.
For this reason it will be convenient to work in the singular setting
of the finite energy class $\mathcal{E}(\Omega)$ (compare section
\ref{sub:Notation-and-preliminaries}).

In a domain $\Omega$ we have a variant of the energy $\mathcal{E}$,
which in case $\phi_{0}$ and $\phi$ are smooth is defined by \[
\mathcal{E}(\phi,\phi_{0})=\frac{1}{(n+1)!}\int_{\Omega}(\phi-\phi_{0})\sum_{0}^{n}(dd^{c}\phi_{0})^{k}\wedge(dd^{c}\phi)^{n-k}\]
 and when $\phi=\phi_{0}=0$ on $\Omega$ integration by parts show
that $\mathcal{E}(\phi,\phi_{0})=\mathcal{E}(\phi)-\mathcal{E}(\phi_{0})$
(compare the lemma below), where \[
\mathcal{E}(\phi):=\mathcal{E}_{0}(\phi):=\mathcal{E}(\phi,0)\]
 so that \[
\mathcal{E}(\phi)=\frac{1}{(n+1)!}\int_{\mathcal{B}}(-\phi)(dd^{c}\phi)^{n}.\]
Moreover, integration by parts also give \[
\frac{d}{dt}\mathcal{E}(\phi_{t},\phi_{0})=\int_{\mathcal{B}}\dot{\phi}_{t}(dd^{c}\phi_{t})^{n}/n!\]
We will need the following generalization:
\begin{lem}
\label{lem:first deriv of energy in domain}Let $\phi$ and $\psi$
be in \textup{$\mathcal{E}^{1}(\Omega)).$ Then \[
\frac{d}{dt}_{t=0^{+}}\mathcal{E}(\phi+t(\psi-\phi))=\int_{\mathcal{B}}(\psi-\phi)(dd^{c}\phi)^{n}/n!\]
Moreover, the following cocycle relation holds $\mathcal{E}(\phi)-\mathcal{E}(\psi)=\mathcal{E}(\phi,\psi).$}\end{lem}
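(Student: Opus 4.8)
The plan is to reduce everything to the smooth (or bounded) case and then pass to the limit using the defining monotone approximation of $\mathcal{E}^1(\Omega)$. First I would treat the case where $\phi,\psi$ are bounded psh functions vanishing on $\partial\Omega$ with finite Monge-Amp\`ere mass (the class $\mathcal{H}_0(\Omega)_b$, or even smooth test functions, which by the approximation result of \cite{ce2} recalled in the preliminaries suffice). For such functions set $\phi_t := \phi + t(\psi-\phi)$, which is again psh and vanishes on the boundary for $t\in[0,1]$, and expand $(dd^c\phi_t)^n = \sum_{k}\binom{n}{k}t^k(dd^c(\psi-\phi))^k\wedge(dd^c\phi)^{n-k}$; plugging into the polynomial expression for $\mathcal{E}(\phi_t)=\frac1{(n+1)!}\int_\Omega(-\phi_t)(dd^c\phi_t)^n$ shows $t\mapsto\mathcal{E}(\phi_t)$ is a polynomial in $t$. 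Differentiating at $t=0$ and collecting terms, the claimed formula $\frac{d}{dt}\big|_{0}\mathcal{E}(\phi_t)=\int_\Omega(\psi-\phi)(dd^c\phi)^n/n!$ is equivalent to the symmetry/integration-by-parts identity $\int_\Omega v\,(dd^c u)^k\wedge(dd^c\phi)^{n-k}$ being symmetric in the roles of $u$ and $v$ under the boundary condition — exactly the Bedford--Taylor integration by parts for bounded psh functions agreeing on $\partial\Omega$ (here all boundary values are $0$, so no boundary terms appear). This is the computation already indicated just before the statement; the point is simply to verify the $k$-by-$k$ bookkeeping, which is routine.

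Next I would establish the cocycle relation $\mathcal{E}(\phi)-\mathcal{E}(\psi)=\mathcal{E}(\phi,\psi)$ in the bounded case. The cleanest route is to connect $\psi$ to $\phi$ by the affine path $\phi_t=\psi+t(\phi-\psi)$ and integrate the first-variation formula: by the first part (applied with base point $\phi_t$, which is legitimate since the derivative formula holds along the whole segment by the same polynomial computation) one gets $\frac{d}{dt}\mathcal{E}(\phi_t)=\int_\Omega(\phi-\psi)(dd^c\phi_t)^n/n!$, and integrating from $0$ to $1$ and comparing with the definition $\mathcal{E}(\phi,\psi)=\frac1{(n+1)!}\int_\Omega(\phi-\psi)\sum_{k=0}^n(dd^c\psi)^k\wedge(dd^c\phi)^{n-k}$ — after expanding $(dd^c\phi_t)^n$ and doing the elementary $t$-integral $\int_0^1 t^k\,dt=1/(k+1)$ — yields the identity. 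Alternatively one can expand both sides directly as polynomials and match coefficients; either way it is a finite algebraic identity modulo Bedford--Taylor integration by parts.

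Finally comes the limiting step, which I expect to be the only real obstacle. For $\phi,\psi\in\mathcal{E}^1(\Omega)$, choose decreasing sequences $\phi_j,\psi_j$ of bounded test functions with $\phi_j\downarrow\phi$, $\psi_j\downarrow\psi$ and $-\mathcal{E}(\phi_j),-\mathcal{E}(\psi_j)$ bounded (possible by definition of $\mathcal{E}^1(\Omega)$, cf.\ \cite{ce0,ce}). One then wants: (i) continuity of $\mathcal{E}$ along such decreasing sequences, and convergence of the mixed energies $\mathcal{E}(\phi_j,\psi_j)\to\mathcal{E}(\phi,\psi)$, which follows from the convergence of mixed Monge-Amp\`ere measures $(dd^c\phi_j)^k\wedge(dd^c\psi_j)^{n-k}$ under decreasing limits in $\mathcal{E}^1$ together with uniform integrability of the (quasi-)psh test functions against these measures — this is the standard finite-energy machinery and is where one must be careful with the boundary; and (ii) that the right derivative $\frac{d}{dt}\big|_{0^+}\mathcal{E}(\phi+t(\psi-\phi))$ exists, for which one invokes concavity: $t\mapsto\mathcal{E}(\phi+t(\psi-\phi))$ is concave (this is the domain analogue of Lemma \ref{lem:prop of energy kahler}, concavity along affine/subgeodesic paths, and for the affine path it can also be read off from the polynomial formula plus the positivity of $-\int(\psi-\phi)dd^c(\psi-\phi)\wedge(\cdots)$-type terms), so the right derivative exists in $]-\infty,\infty]$ and equals $\lim_{j}$ of the bounded-case derivatives once one has the monotone convergence in hand. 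Passing the formula from the $\phi_j,\psi_j$ level to the limit then gives both the derivative formula and the cocycle relation for $\mathcal{E}^1(\Omega)$. The delicate point throughout is justifying the interchange of limit and integration against mixed Monge-Amp\`ere measures near $\partial\Omega$; I would handle it exactly as in the references \cite{ce0,ce} (or the parallel compact-manifold arguments in \cite{begz,bbgz}) rather than reprove it, citing the continuity of $\mathcal{E}$ and of mixed masses along decreasing sequences in the finite-energy class.
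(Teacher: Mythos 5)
Your proposal follows essentially the same route as the paper: establish the derivative formula in the bounded class $\mathcal{H}_0(\Omega)_b$ by expanding $\mathcal{E}(\phi+t(\psi-\phi))$ as a polynomial in $t$ and integrating by parts (valid since boundary values vanish and the masses are finite), then pass to $\mathcal{E}^1(\Omega)$ by decreasing approximation using the continuity results of \cite{ce0,ce}, and finally deduce the cocycle relation by integrating the first-variation formula along the affine segment. The only variation worth noting is that you invoke concavity to guarantee existence of the one-sided derivative, whereas the paper instead records the bounded-case identity in integrated form $\mathcal{E}(\phi_j+t(\psi_k-\phi_j))=\int_0^t g_{k,j}(s)\,ds$ and passes to the limit $j\to\infty$ then $k\to\infty$, obtaining a continuous density $g(s)$ and hence differentiability directly; both routes are legitimate and rest on the same continuity-of-mixed-masses machinery.
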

\begin{proof}
Assume first that $\phi$ and $\psi$ are in $\mathcal{H}_{0}(\Omega)_{b}.$
In this class one may integrate by parts just as in the smooth case
(using the assumption on finite Monge-Ampère mass; see \cite{ce0}
and references therein) and hence expanding $\mathcal{E}(\phi+t(\psi-\phi))$
and integrating by parts gives\[
\mathcal{E}(\phi+t(\psi-\phi))=t\int_{\mathcal{B}}(\psi-\phi)(dd^{c}\phi)^{n}+O(t^{2})I\]
where $I$ is a sum of terms of the form $\int(\psi-\phi)(dd^{c}\phi)^{n-j}(dd^{c}\psi)^{j}$
which are finite since $\phi$ and $\psi$ are in $\mathcal{H}(\Omega)_{b}.$
This finishes the proof in the case of the class $\mathcal{H}_{0}(\Omega)_{b}.$
Finally, given $\phi$ and $\psi$ in $\mathcal{E}^{1}(\Omega))$
we take sequences $\phi_{j}$ and $\psi_{k}$ in $\mathcal{H}(\Omega)_{b},$
decreasing to $\phi$ and $\psi$ respectively. By the previous case
we have \[
\mathcal{E}(\phi_{j}+t(\psi_{k}-\phi_{j}))=\int_{0}^{t}\int_{\Omega}(\psi_{k}-\phi_{j})(dd^{c}(\phi_{j}+s(\psi_{k}-\phi_{j})))^{n}ds=:\int_{0}^{t}g_{k,j}(s)ds\]
By well-known continuity properties \cite{ce0} and the finite energy
assumptions letting first $j$ and then $k$ tend to infinity shows
that the previous formula holds with $\phi_{j}$ and $\psi_{k}$ replaced
with $\phi$ and and $\psi,$ respectively. Moreover, for the same
reason the corresponding density $g(s)$ is continuous wrt $s$ and
that ends the proof of the derivative formula in the general case.
Finally, the previous formula implies the cocycle relation by integrating
along the line $t\mapsto\phi+t(\psi-\phi)$ (note that by a well-known
Cauchy-Schwartz type estimate all terms in $\mathcal{E}(\phi,\psi)$
are finite). 
\end{proof}
Next we turn to the definition of geodesic segments in the setting
of domains. Given, say $\phi_{0}$ and $\phi_{1}$ on $\Omega$ which
are psh and smooth up to the boundary, where they vanish, the corresponding
geodesic $\phi_{t}$ is defined by replacing the space $\mathcal{H}(L)_{b}$
with the space of all bounded psh functions tending to zero at the
boundary. In other words, a geodesic is defined as the following regularized
envelope, where $M:=\Omega\times\mathcal{A}$ (with $\mathcal{A}$
denoting an annulus): \[
\phi_{t}:=\Phi(z,t):=\sup_{\mathcal{\psi\in K}}\left\{ \Psi(z,t)\right\} ^{*}\]
where $\mathcal{K}$ is the set of all psh functions $\Psi\in PSH\cap L^{\infty}(M)$
such that $\Psi^{*}\leq f$ on $\partial M,$ where $f$ is the function
on $\partial M$ defined as follows: decomposing $\partial M:=B_{1}\cup B_{2}:=\partial\Omega\times\mathcal{A\cup\Omega\times\mathcal{\partial A}}$
we let $f=0$ on $B_{1}$ and $f=\phi_{i}$ for $i=1,2$ on the two
different components of $B_{2}.$ In particular, if the$\phi_{0}$
and $\phi_{1}$ are continuous on $\bar{\Omega}$ then so is the boundary
data $f.$ Just as in the setting of compact Kähler manifolds we may
as well, by symmetry, replace the bounded domain $\mathcal{A}$ with
a strip so that, for $t$ real, $\phi_{t}$ gets identified with a
function on $\Omega\times[0,1].$ In this latter notation there is
a similar construction of a barrier $\chi_{t}$ as in the compact
case, namely \begin{equation}
\chi_{t}:=\max\{\phi_{0}-A\Re t,\phi_{1}-A(1-\Re t),A\rho\}\label{eq:barrier in compact case-1}\end{equation}
where $\rho$ is a psh exhaustion function of $\Omega$ (e..g. $\rho=|z|^{2}-1$
in the ball case). It hence determines an extension $F$ of $f$ such
that $F\in\mathcal{C}^{0}(\bar{M})\cap PSH(M)$ and hence $\Phi$
is bounded on $M$ and converges uniformly towards the right boundary
vales. In fact, given the extension $F$ above it follows from a theorem
in \cite{bl} (since $M$ is hyperconvex) that $\Phi\in\mathcal{C}^{0}(\bar{M})\cap PSH(M)$
with \[
(dd^{c}\Phi)^{n+1}=0,\,\,\,\mbox{in\,\ensuremath{M}}\]
(but strictly speaking we will not need the continuity, only the boundedness
and the uniform boundary behavior as $t\rightarrow0$ and $t\rightarrow1).$
In particular we obtain a continuous curve $\phi_{t}$ in the space
$PSH\cap L^{\infty}(\Omega).$ 
\begin{lem}
\label{lem:prop of energy along geod in domain}Let $\phi_{t}$ be
a geodesic segment as above. 
\begin{itemize}
\item For any fixed $t$ we have that $\phi_{t}\in\mathcal{E}^{1}(\Omega)$
and if\textup{ $\dot{\phi}_{0}$ denotes the right derivative of $\phi_{t}$
at $t=0$ (which exists by convexity), then} \textup{\[
\frac{d}{dt}_{t=0^{+}}\mathcal{E}(\phi_{t})\leq\int_{\mathcal{B}}\dot{\phi}_{0}(dd^{c}\phi_{0})^{n}/n!,\]
}
\item $t\mapsto\mathcal{E}(\phi_{t})$ is affine and continuous on $[0,1].$ 
\end{itemize}
\end{lem}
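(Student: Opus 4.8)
The plan is to transfer the two properties from the compact K\"ahler case (Lemma \ref{lem:prop of energy kahler}) to the domain setting, the only new difficulty being the presence of a boundary when integrating by parts; this is handled by working in the finite-energy class $\mathcal{E}^1(\Omega)$ where a controlled integration-by-parts is available (via \cite{ce0}). First I would record that the barrier $\chi_t$ from \eqref{eq:barrier in compact case-1} lies below $\phi_t$, so $\phi_t$ is bounded, tends to $0$ on $\partial\Omega$, and has uniformly controlled Monge-Amp\`ere mass; hence $\phi_t \in \mathcal{H}_0(\Omega)_b \subset \mathcal{E}^1(\Omega)$ for every fixed $t$. Convexity of $t \mapsto \phi_t$ (as a sup of subgeodesics, equivalently from $(dd^c\Phi)^{n+1}=0$ on $M$) gives existence of the right derivative $\dot\phi_0$ at $t=0$ as a decreasing limit of the difference quotients $(\phi_t - \phi_0)/t$.

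For the second bullet (affine and continuous), I would mimic the compact proof: approximate $\phi_0,\phi_1$ from above by smooth psh functions vanishing on $\partial\Omega$, for which the formula $d_t d_t^c \mathcal{E}(\phi_t) = \int_\Omega (dd^c\Phi)^{n+1}$ holds by the smooth computation plus Lemma \ref{lem:first deriv of energy in domain}, so that $t \mapsto \mathcal{E}(\phi_t)$ is affine because the Monge-Amp\`ere of the geodesic envelope vanishes. Continuity on $[0,1]$ up to the endpoints follows because the barrier $\chi_t$ forces $\phi_t \to \phi_0$ (resp.\ $\phi_1$) uniformly as $t \to 0$ (resp.\ $t \to 1$), and $\mathcal{E}$ is continuous along uniformly bounded decreasing (hence also monotone) sequences in $\mathcal{E}^1(\Omega)$ by the continuity properties cited from \cite{ce0}; a standard sandwiching argument between suitable smooth approximants then upgrades this to genuine continuity of the affine function at the endpoints. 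Affineness on the open interval plus continuity at the endpoints gives affineness on all of $[0,1]$.

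For the first bullet (the inequality on the right derivative) I would argue exactly as in the compact case. By convexity, $\dot\phi_0 \le \phi_t - \phi_0$ for $t \in (0,1]$, and in particular $\dot\phi_0 \le \phi_1 - \phi_0 \le 0$, so $\dot\phi_0$ is a legitimate (bounded above, possibly $-\infty$-valued) test object. Using Lemma \ref{lem:first deriv of energy in domain} along the \emph{linear} path $\phi_0 + t(\phi_1 - \phi_0)$ gives $\tfrac{d}{dt}|_{t=0^+}\mathcal{E}(\phi_0 + t(\phi_1-\phi_0)) = \int_\Omega (\phi_1 - \phi_0)(dd^c\phi_0)^n/n!$. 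Since $\mathcal{E}$ is concave along subgeodesics and the geodesic $\phi_t$ and the segment $\phi_0 + t(\phi_1 - \phi_0)$ share the endpoints, while $\mathcal{E}$ is affine along the geodesic and convex along the segment (equivalently: the geodesic lies above any subgeodesic with the same endpoints, so its slope at $0$ is at most that of the linear segment), one gets $\tfrac{d}{dt}|_{t=0^+}\mathcal{E}(\phi_t) \le \int_\Omega \dot\phi_0 (dd^c\phi_0)^n/n!$ after passing to the limit via monotone convergence in the difference quotients. \textbf{The main obstacle} is the limiting step: $\dot\phi_0$ is merely a decreasing limit of bounded psh-type differences and may be unbounded below, so one must justify the convergence $\int_\Omega \big((\phi_t - \phi_0)/t\big)(dd^c\phi_0)^n \to \int_\Omega \dot\phi_0 (dd^c\phi_0)^n$ as $t \to 0^+$; this is exactly a monotone convergence statement against the fixed measure $(dd^c\phi_0)^n$ and is where the finite-energy bookkeeping of \cite{ce0} (boundedness of masses along the barrier, integrability of $\dot\phi_0$ against $(dd^c\phi_0)^n$ coming from $\phi_1 - \phi_0 \ge \dot\phi_0 \ge$ the barrier difference) does the work, in complete parallel with \cite{berm1} in the compact case.
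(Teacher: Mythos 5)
Your overall plan follows the same route as the paper: barrier bounds plus monotonicity of $\mathcal{E}$ to get finiteness of energy; the singular version of $d_t d_t^c\mathcal{E}(\phi_t)=\int_\Omega(dd^c\Phi)^{n+1}$ (from \cite{ce}) plus uniform boundary convergence of $\phi_t$ for affineness and continuity; and concavity of $\mathcal{E}$ plus a monotone limit in the difference quotients for the right-derivative bound. The finiteness and affineness/continuity steps are essentially correct and match the paper, modulo one small overclaim: you assert $\phi_t\in\mathcal{H}_0(\Omega)_b$, which requires finite Monge--Amp\`ere mass; the barrier only shows $\phi_t$ is bounded with the correct boundary values, and the paper contents itself with $\phi_t\in\mathcal{E}^1(\Omega)$, which is all that is needed and is what the barrier-plus-monotonicity argument actually gives.

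The step that is genuinely flawed is your justification of the right-derivative bound. You compare the geodesic with the linear segment $\ell_t:=\phi_0+t(\phi_1-\phi_0)$ as if the latter were a subgeodesic ("the geodesic lies above any subgeodesic with the same endpoints, so its slope at $0$ is at most that of the linear segment"). But the linear segment is \emph{not} a subgeodesic in general: the function $(z,t)\mapsto\phi_0(z)+\Re t\cdot\bigl(\phi_1(z)-\phi_0(z)\bigr)$ has a negative contribution to its complex Hessian coming from the off-diagonal $\partial_z\partial_{\bar t}$ terms unless $\phi_1-\phi_0$ is constant. Indeed, if $\ell_t$ \emph{were} a subgeodesic it would satisfy $\ell_t\le\phi_t$ with equality at the endpoints, hence $\phi_1-\phi_0=\dot\ell_0\le\dot\phi_0$; combined with the elementary convexity bound $\dot\phi_0\le\phi_1-\phi_0$ this would force $\dot\phi_0=\phi_1-\phi_0$, i.e.\ the geodesic would have to be the linear segment, which is false in general. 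Moreover, even granting the slope comparison, it delivers only the weaker estimate $\frac{d}{dt}|_{t=0^+}\mathcal{E}(\phi_t)\le\int(\phi_1-\phi_0)(dd^c\phi_0)^n/n!$, and since $\dot\phi_0\le\phi_1-\phi_0$ this is strictly weaker than what the lemma claims. So the detour through the linear segment cannot be repaired; it should be dropped.

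The clean argument -- and the one the paper uses -- is more direct. Concavity of $\mathcal{E}$ on $\mathcal{E}^1(\Omega)$ (w.r.t.\ the usual affine structure, which you correctly invoke elsewhere and which follows from the second-variation formula) gives the first-order Taylor inequality at $\phi_0$ for \emph{each fixed} $t$:
\[
\frac{1}{t}\bigl(\mathcal{E}(\phi_t)-\mathcal{E}(\phi_0)\bigr)\;\le\;\frac{1}{t}\int_\Omega(\phi_t-\phi_0)\,(dd^c\phi_0)^n/n!\,.
\]
Now let $t\to0^+$. On the right, $(\phi_t-\phi_0)/t$ decreases to $\dot\phi_0$ by pointwise convexity of $t\mapsto\phi_t$, and these quotients are bounded above (say by $\phi_1-\phi_0\le0$), so the integral against the positive measure $(dd^c\phi_0)^n$ passes to the limit by monotone convergence. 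On the left, the right derivative of $\mathcal{E}(\phi_t)$ exists by the affineness you prove in the second bullet. This gives exactly the claimed inequality, and is the monotone-convergence step you correctly flagged as the crux; you just need to feed it the concavity inequality at time $t$ rather than a slope comparison with a non-subgeodesic path.
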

\begin{proof}
As explained above $\chi_{t}\leq\phi_{t}\leq0$ where $\chi_{t}$
is a maximum of functions in $\mathcal{E}^{1}(\Omega)$ and hence
$\chi_{t}$ is also in the space $\mathcal{E}^{1}(\Omega)$ \cite{ce0}.
By Lemma \ref{lem:first deriv of energy in domain} the functional
$\mathcal{E}$ is increasing on $\mathcal{E}^{1}(\Omega)$ (since
its differential is a positive measure) and hence $-\infty<\mathcal{E}(\chi_{t})\leq\mathcal{E}(\phi_{t}^{j})\leq0$
for any sequence $\phi_{t}^{j}$ in $\mathcal{H}_{0}(\Omega)_{b}$
decreasing to $\phi,$ which proves the first claim. Next, we recall
that $\mathcal{E}$ is concave on $\mathcal{E}^{1}(\Omega)$ (wrt
the usual affine structure) which for example follows from the formula
for $d_{t}d_{t}^{c}\mathcal{E}(\phi_{t})$ discussed below. In particular,
\[
\frac{1}{t}(\mathcal{E}(\phi_{t})-\mathcal{E}(\phi_{0}))\leq\frac{1}{t}\int_{\Omega}(\phi_{t}-\phi)(dd^{c}\phi_{0})^{n}/n!,\]
so that letting $t\rightarrow0^{+}$ proves the first point. As for
the last point integration by parts show that the formula \ref{eq:second deriv of energy}
for $d_{t}d_{t}^{c}\mathcal{E}(\phi_{t})$ is still valid in the smooth
case. However, as we will need the formula in a singular setting we
instead refer to the result proved in \cite{ce} which implies that
if $\Phi\in\mathcal{F}(\mathcal{\Omega\times}\mathcal{A})$ whose
slices $\phi_{t}$ are in $\mathcal{E}^{1}(\Omega)$ then the analogue
of formula \ref{eq:second deriv of energy} holds (i.e. for $X=\Omega)$
in the sense of currents. Finally, since $\phi_{t}\rightarrow\phi_{0}$
uniformly as $t\rightarrow0$ we have that $\mathcal{E}(\phi_{t})\rightarrow\mathcal{E}(\phi_{0})$
as $t\rightarrow0$ \cite{ce0} and similarly for $t\rightarrow1$
and that ends the proof.
\end{proof}

\subsection{The case of the ball and the proof of Theorem \ref{thm:sharp m-t in ball under inv intro}}

We now take $\Omega$ to be the unit-ball and make a special choice
of reference function $\phi_{0}$ as \[
\phi_{0}=(n+1)[\log(1+|z|^{2})-\log2].\]
 This is a potential of the Fubini-Study metric on $\P^{n}$ and satisfies
the Kähler-Einstein equation \begin{equation}
(dd^{c}\phi_{0})^{n}/n!=a_{n}e^{-\phi_{0}}\label{eq:k-e equation in ball}\end{equation}
 We first prove an estimate for the Bergman kernel at the origin.
By Prop \ref{pro:bergman kernel under s one inv} this amounts to
an estimate of the integral of $e^{-\phi}$ in the $S^{1}$-invariant
case, but we prefer to argue first in the general case, since we feel
the estimate for Bergman kernels has independent interest. 
\begin{prop}
\label{pro:m-t for bergman kernel in ball}Let $\phi$ be a smooth
plurisubharmonic function in the ball that vanishes on the boundary.
Then \[
-\log K_{\phi}(0)\leq\log\int_{\mathcal{B}}e^{-\phi_{0}}-b_{n}\mathcal{E}(\phi,\phi_{0})\]
 where $b_{n}=(a_{n}\int e^{-\phi_{0}})^{-1}$. \end{prop}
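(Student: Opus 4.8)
The plan is to mimic, in the setting of the ball, the Bergman-kernel argument used on compact manifolds (Proposition 2.1 and Theorem 2.3), with the special reference weight $\phi_0$ playing the role of a balanced/K\"ahler-Einstein metric. Concretely, I would join $\phi_0$ and $\phi$ by a geodesic $\phi_t$ in the sense of the previous subsection, with $\phi_1=\phi$, and study $f(t):=-\log K_{\phi_t}(0)$. By the plurisubharmonic variation result of \cite{bern1} (recalled above), $\log K_{\phi_t}(0)$ is convex in $t$, so $f$ is concave and hence $f(1)-f(0)\le f'(0^+)$. The whole game is to estimate $f'(0^+)$ from above.

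For the derivative, I would use Lemma \ref{lem:der of bergman}: along a smooth curve, $\frac{d}{dt}K_{\phi_t}(0)=\int_{\mathcal B}\dot\phi_t|K_{\phi_t}(0,y)|^2 e^{-\phi_t}$, so
\[
-\frac{d}{dt}\Big|_{t=0}\log K_{\phi_t}(0)=\int_{\mathcal B}(-\dot\phi_0)\,\frac{|K_{\phi_0}(0,y)|^2}{K_{\phi_0}(0)}\,e^{-\phi_0}.
\]
Since $\phi_t$ is a geodesic it is convex in $t$, so $\dot\phi_0\le\phi_1-\phi_0=\phi-\phi_0\le 0$, and Cauchy--Schwarz gives $|K_{\phi_0}(0,y)|^2\le K_{\phi_0}(0)K_{\phi_0}(y)$. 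This bounds the right-hand side by $\int_{\mathcal B}(-\dot\phi_0)K_{\phi_0}(y)e^{-\phi_0}$. Now the point of the special choice of $\phi_0$: here $H^0(K_X+L)$ gets replaced by the space $\mathcal O(\mathcal B)\cap L^2(e^{-\phi_0})$, and by Proposition \ref{pro:bergman kernel under s one inv} the reference weight $\phi_0$, being $S^1$-invariant, has $K_{\phi_0}(0,\cdot)\equiv 1/\!\int e^{-\phi_0}$, so $K_{\phi_0}(y)e^{-\phi_0(y)}$ is not literally a multiple of $(dd^c\phi_0)^n/n!$ at every $y$ — but we only need it at the level of integrating against $(-\dot\phi_0)\ge$ something; more robustly, I would invoke the K\"ahler--Einstein identity \eqref{eq:k-e equation in ball}, $(dd^c\phi_0)^n/n!=a_n e^{-\phi_0}$, together with the pointwise Bergman inequality $K_{\phi_0}(y)e^{-\phi_0(y)}\le C$ coming from the extremal characterization, to get $\int_{\mathcal B}(-\dot\phi_0)K_{\phi_0}e^{-\phi_0}\le b_n\int_{\mathcal B}(-\dot\phi_0)(dd^c\phi_0)^n/n!$ with $b_n=(a_n\int e^{-\phi_0})^{-1}$. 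Then Lemma \ref{lem:prop of energy along geod in domain} (the energy derivative inequality, plus affineness of $t\mapsto\mathcal E(\phi_t)$) gives $\int_{\mathcal B}(-\dot\phi_0)(dd^c\phi_0)^n/n!\le -\frac{d}{dt}|_{t=0}\mathcal E(\phi_t,\phi_0)=-\mathcal E(\phi,\phi_0)$, hence $f'(0^+)\le -b_n\mathcal E(\phi,\phi_0)$.

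Putting this together with concavity of $f$ and the value $f(0)=-\log K_{\phi_0}(0)=\log\int_{\mathcal B}e^{-\phi_0}$ (again Proposition \ref{pro:bergman kernel under s one inv}), we obtain $-\log K_\phi(0)=f(1)\le f(0)+f'(0^+)\le \log\int_{\mathcal B}e^{-\phi_0}-b_n\mathcal E(\phi,\phi_0)$, which is the claim. I expect the main obstacle to be the regularity/justification steps rather than the formal computation: the geodesic $\phi_t$ is only bounded (and continuous), not smooth, so Lemma \ref{lem:der of bergman} and the differentiation of $\log K_{\phi_t}(0)$ have to be carried out by approximation — either by smoothing the geodesic or, as is standard, by using that $t\mapsto \log K_{\phi_t}(0)$ is convex so its right derivative at $0$ exists and is dominated by the corresponding derivative along the linear path $\phi_0+t\dot\phi_0$, to which the smooth formula applies. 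One must also check that $\dot\phi_0$ lies in a class where the energy identities of Lemma \ref{lem:first deriv of energy in domain} and Lemma \ref{lem:prop of energy along geod in domain} are valid, and that the Cauchy--Schwarz step survives the passage to $L^\infty$ weights; all of this parallels the compact case treated in \cite{berm1}, so I would import those arguments essentially verbatim, flagging the boundary integration-by-parts subtleties handled via the $\mathcal E^1(\Omega)$ formalism of the previous subsection.
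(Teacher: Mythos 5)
Your overall strategy — join $\phi_0$ to $\phi$ by a geodesic, use convexity of $t\mapsto\log K_{\phi_t}(0)$, the derivative formula for the Bergman kernel, and the energy-derivative inequality from Lemma~\ref{lem:prop of energy along geod in domain} — is exactly the paper's. However, the Cauchy--Schwarz detour you take in the middle introduces two genuine gaps, and what makes it unnecessary is a fact you already noted yourself.

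First, the Cauchy--Schwarz step requires $\dot\phi_0\le 0$, which you derive from ``$\dot\phi_0\le\phi_1-\phi_0=\phi-\phi_0\le 0$.'' But in the present Proposition $\phi$ and $\phi_0$ are both functions in $\mathcal H_0(\mathcal B)$ vanishing on $\partial\mathcal B$, with \emph{no} normalization $\phi\le\phi_0$ imposed (in contrast to the compact-case Proposition, where $\phi-\phi_0\le 0$ is a hypothesis). So the sign of $\dot\phi_0$ is not controlled, and the inequality $\int(-\dot\phi_0)\frac{|K_{\phi_0}(0,y)|^2}{K_{\phi_0}(0)}e^{-\phi_0}\le\int(-\dot\phi_0)K_{\phi_0}(y)e^{-\phi_0}$ does not follow. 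Second, even granting $\dot\phi_0\le 0$, your pointwise estimate $K_{\phi_0}(y)e^{-\phi_0(y)}\le b_n(dd^c\phi_0)^n/n!$ is equivalent (by the K\"ahler--Einstein identity) to $K_{\phi_0}(y,y)\le 1/\!\int e^{-\phi_0}=K_{\phi_0}(0,0)$ for all $y$, which is false: on a bounded domain with a bounded weight the diagonal Bergman kernel blows up at the boundary. Relatedly, the compact-case identity $\int_X K_\phi e^{-\phi}=N$ that makes the Cauchy--Schwarz route profitable in Theorem~\ref{thm:main in text} has no analogue here, since the relevant Hilbert space is infinite-dimensional.

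The fix is already in your text: Proposition~\ref{pro:bergman kernel under s one inv} gives $K_{\phi_0}(0,y)\equiv 1/\!\int e^{-\phi_0}=K_{\phi_0}(0)$, so the derivative formula collapses to the \emph{exact identity}
$\frac{d}{dt}\big|_{t=0}\log K_{\phi_t}(0)=\int_{\mathcal B}\dot\phi_0\,e^{-\phi_0}\big/\!\int_{\mathcal B}e^{-\phi_0}$, and then the K\"ahler--Einstein equation $(dd^c\phi_0)^n/n!=a_n e^{-\phi_0}$ turns this into $b_n\int_{\mathcal B}\dot\phi_0\,(dd^c\phi_0)^n/n!$. No sign condition and no pointwise Bergman bound are needed. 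Combining this with Lemma~\ref{lem:prop of energy along geod in domain} (which gives $\int\dot\phi_0(dd^c\phi_0)^n/n!\ge\frac{d}{dt}\big|_{t=0}\mathcal E(\phi_t,\phi_0)=\mathcal E(\phi,\phi_0)$ by affineness) and the convexity of $t\mapsto\log K_{\phi_t}(0)$ completes the argument exactly as in the paper. You had all the ingredients; the Cauchy--Schwarz step is the wrong move in this setting and should simply be deleted.
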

\begin{proof}
As in the compact setting the proof uses geodesics. We connect $\phi$
and $\phi_{0}$ by a geodesic $\phi_{t}$ such that $\phi_{1}=\phi$.
Then \[
g(t):=\log K_{\phi_{t}}-b_{n}\mathcal{E}(\phi_{t},\phi)\]
 is a convex function of $t$. We claim that $g'(0)\geq0$. For this
we use the same formula as before for the derivative of the Bergman
kernel \[
\frac{d}{dt}K_{\phi_{t}}(x)=\int_{\mathcal{B}}\dot{\phi}_{t}|K_{\phi_{t}}(x,y)|^{2}e^{-\phi_{t}}.\]
 Take $x=0$ and $t=0$. Then, since $\phi_{0}$ is $S^{1}-$symmetric
\[
K_{\phi_{0}}(0,y)=1/\int_{\mathcal{B}}e^{-\phi_{0}}\]
 by proposition \ref{pro:bergman kernel under s one inv}. Therefore
\[
\frac{d}{dt}|_{t=0}\log K_{\phi_{t}}(0)=\int_{\mathcal{B}}\dot{\phi}_{0}e^{-\phi_{0}}/\int_{\mathcal{B}}e^{-\phi_{0}}.\]
 Combining this with the Kähler-Einstein condition \ref{eq:k-e equation in ball}
we get, also using Lemma \ref{lem:prop of energy along geod in domain},
that \[
\frac{d}{dt}|_{t=0}\log K_{\phi_{t}}(0)=b_{n}\int_{X}\dot{\phi}_{0}(dd^{c}\phi_{0})^{n}/n!\geq b_{n}\frac{d}{dt}|_{t=0}\mathcal{E}(\phi_{t},\phi).\]
 so $g'(0)\geq0$ as claimed. Since $g$ is moreover convex we get
$g(1)\geq g(0)$ or explicitly \[
\log K_{\phi}(0)-b_{n}\mathcal{E}(\phi,\phi_{0})\geq\log K_{\phi_{0}}(0).\]
 Invoking \ref{pro:bergman kernel under s one inv} again the proposition
follows. 
\end{proof}
From here we can not continue as in the compact case since we have
no counterpart of \ref{eq:integral of bergman is dim}. It seems plausible
to conjecture that for any compact $K$ in the ball \[
\int_{K}K_{\phi}(z,z)e^{-\phi(z)}\leq C(K,\phi)\]
 where the constant depends only on $K$ and, say, \[
\int_{\B}(dd^{c}(\phi+|z|^{2}))^{n}.\]
 If this were true we could follow a route similar to what we did
in the case of a compact manifold and obtain sharp estimates for \[
\int_{K}e^{-\phi}\]
 for functions that are not necessarily $S^{1}$-invariant. The most
one could hope for in this direction would be \[
\int_{\B}(1-|z|^{2})^{n+1}K_{\phi}(z,z)e^{-\phi(z)}\leq C(\phi)\]
 with the same dependence on $\phi$. We do not know if either of
these estimates hold.

Instead we now introduce the additional assumption that $\phi$ be
$S^{1}$-invariant. We then get, by Proposition \ref{pro:bergman kernel under s one inv},
that \[
\log\int_{\mathcal{B}}e^{-\phi}\leq\log\int_{\mathcal{B}}e^{-\phi_{0}}-b_{n}\mathcal{E}(\phi,\phi_{0})\]
 if $\phi$ is any smooth plurisubharmonic function in the ball, vanishing
on the boundary and $S^{1}$-invariant.

\bigskip{}

As it stands the constant here is not optimal. An easy way to improve
it is to replace our 'reference' $\phi_{0}$ by \begin{equation}
\phi_{0}^{\epsilon}:=(n+1)[\log(\epsilon^{2}+|z|^{2})-\log(\epsilon^{2}+1)].\label{eq:rescaled f-s in unit-ball}\end{equation}
 This amounts to replacing the unit ball by a larger ball of radius
$1/\epsilon$ which brings us closer and closer to all of $\P^{n}$,
where the same argument is known to give an optimal constant. Then
\[
(dd^{c}\phi_{\epsilon})^{n}/n!=a_{n}(\epsilon)e^{-\phi_{\epsilon}},\]
 and as before we let \[
b_{n}(\epsilon)=1/(a_{n}(\epsilon)\int e^{-\phi_{\epsilon}}).\]
 By the Kähler-Einstein equation for $\phi_{\epsilon}$ \[
b_{n}(\epsilon)=n!/\int(dd^{c}\phi_{\epsilon})^{n}.\]
 The integral here is easily computed using Stokes' theorem \[
\int_{|z|<1}(dd^{c}\phi_{\epsilon})^{n}=\int_{|z|=1}d^{c}\phi_{\epsilon}\wedge(dd^{c}\phi_{\epsilon})^{n-1}=\]
 \[
=(n+1)^{n}(1+\epsilon^{2})^{-n}\int_{|z|=1}d^{c}|z|^{2}\wedge(dd^{c}|z|^{2})^{n-1}=(n+1)^{n}(1+\epsilon^{2})^{-n}\int_{|z|<1}(dd^{c}|z|^{2})^{n}=\]
 \[
=(n+1)^{n}(1+\epsilon^{2})^{-n}n!\pi^{-n}|\B_{n}|=(n+1)^{n}(1+\epsilon^{2})^{-n}.\]
 Hence $b_{n}(\epsilon)$ is asymptotic to $n!/(n+1)^{n}$ as $\epsilon$
goes to zero (coinciding with the inverse of the volume of $\P^{n},$
as it must). We have \[
\mathcal{E}(\phi_{\epsilon})=(n+1)^{-1}\int_{\mathcal{B}}\phi_{\epsilon}(dd^{c}\phi_{\epsilon})^{n}/n!\]
 which by the Kähler-Einstein equation equals \[
-a_{n}(\epsilon)\int_{\mathcal{B}}\log(\epsilon^{2}+|z|^{2})e^{-\phi_{\epsilon}}\]
 plus a quantity tending to zero with $\epsilon$. Thus \[
b_{n}\mathcal{E}(\phi_{\epsilon})=-\int_{\mathcal{B}}\log(\epsilon^{2}+|z|^{2})e^{-\phi_{\epsilon}}/\int_{\mathcal{B}}e^{-\phi_{\epsilon}}.\]
 This is the integral of $-\log(\epsilon^{2}+|z|^{2})$ against a
sequence of measures that tend to a Dirac unit mass at the origin,
and it is easily seen to be asymptotic to a constant plus $-\log\epsilon^{2}$.On
the other hand \[
-\log\int_{\mathcal{B}}e^{-\phi_{\epsilon}}\]
 is also asymptotic to $-\log\epsilon^{2}$ plus a constant. All in
all this proves Theorem \ref{thm:sharp m-t in ball under inv intro}
stated in the introduction. 

Notice that there seems to be no extremal function for the inequality.
For any nonzero $\epsilon$, $\phi_{0}^{\epsilon}$ is an extremal
by construction, but these functions tend to $(n+1)\log|z|^{2}$,
which has infinite energy.

We do not know if \ref{thm:sharp m-t in ball under inv intro} holds
without our assumption of $S^{1}-$symmetry except for $n=1$, see
\cite{m} where a symmetrization argument can be used. Our methods
also have bearings on symmetrization properties in the present higher
dimensional setting of domains in $\C^{n}$ and we hope to come back
to this point in the future. In section \ref{sec:Moser-Trudinger-and-Demailly}
we shall use a different argument to prove the inequality 'modulo
$\epsilon$' without assuming $S^{1}$-invariance.

\section{\label{sec:Moser-Trudinger-and-Demailly}Moser-Trudinger and Brezis-Merle
type inequalities on domains in $\C^{n}$}

Let $\Omega$ be a bounded hyperconvex domain in $\C^{n}.$ We may
then set the reference form $\omega_{0}$ to be the zero-form: $\omega_{0}=0$
and use the notation \[
\mathcal{E}(u):=\mathcal{E}_{\omega_{0}}(u)=\frac{1}{(n+1)!}\int_{\Omega}u(dd^{c}u)^{n}\]
 It will also be convenient to write \[
\mathcal{M}(u):=\int_{\Omega}(dd^{c}u)^{n}\]

We will say that the\emph{ sharp Moser-Trudinger (M-T) inequality
}holds for the domain $\Omega$ if there is a constant $C$ such that
\[
(\mbox{M-T})\,\,\,\,\,\log\int_{\Omega}e^{-u}dV\leq-\frac{n!}{(n+1)^{n}}\mathcal{E}(u)+C\]
for any $u\in\mathcal{E}_{1}(\Omega).$ Similarly, the\emph{ quasi-sharp
M-T inequality} is said to hold on $\Omega$ if for any $\delta>0$
the previous inequality holds when the factor $n+1$ in front of $\mathcal{E}(u)$
is replaced by $n+1-\delta$ and the constant $C$ by $C-\log(\delta^{(n-1)}).$ 

The \emph{sharp Brezis-Merle (B-M) inequality} is said to hold for
the domain $\Omega$ if there is a constant $A$ such that \[
(\mbox{B-M}):\,\,\,\,\int_{\Omega}e^{-u}dV\leq A\left(1-\frac{1}{n^{n}}\mathcal{M}(u)\right)^{-1}\]
for any $u\in\mathcal{F}(\Omega)$ such that $\mathcal{M}(u):=\int_{\Omega}(dd^{c}u)^{n}<n^{n}.$ 

It will also be convenient to used the following equivalent formulations
of the quasi-sharp Moser-Trudinger and Brezis-Merle inequalities:
\[
(\mbox{M-T}')\,\,\,\,\,\int_{\Omega}e^{-(n+1-\delta)u}dV\leq C\delta^{-(n-1)}e^{-(n+1-\delta)n!\mathcal{E}(u)}\]
for some positive constant $C$ and (when $n>1):$ there is a positive
constant $A$ such that \begin{equation}
(\mbox{B-M}'):\,\,\,\,\,\int_{\Omega}e^{-(n-\delta)u}dV\leq A\delta^{-(n-1)}\label{eq:alt def of quasi sharp b-m}\end{equation}
for all $u\in\mathcal{F}(\Omega)$ such that $\mathcal{M}(u)=1$

\subsection{M-T in $\C^{n}$ implies B-M in $\C^{n+1}$}
\begin{prop}
\label{pro: m-t impl demailly }The (quasi-) sharp Moser-Trudinger
inequality on $\Omega\subset\C^{n}$ implies the (quasi-) sharp Brezis-Merle
inequality\emph{ }on $\Omega\times D\subset\C^{n+1}.$ More generally,
the (quasi-) sharp Moser-Trudinger inequality on the ball in $\C^{n}$
implies the (quasi-) sharp Brezis-Merle inequality on any hyperconvex
domain in $\C^{n+1}.$\end{prop}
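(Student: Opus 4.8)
The plan is to exploit the product structure of $\Omega\times D$ (and, in the general hyperconvex case, to reduce to a product by a comparison argument). Let $v$ be a function in $\mathcal{F}(\Omega\times D)$ with $\mathcal{M}(v)<(n+1)^{n+1}$; write the coordinates as $(z,w)$ with $z\in\Omega\subset\C^n$ and $w\in D$. The first step is to slice: for each fixed $w\in D$ the function $z\mapsto v(z,w)$ is plurisubharmonic on $\Omega$, vanishing (in the appropriate boundary sense) on $\partial\Omega$, so one may apply the $n$-dimensional Moser-Trudinger inequality to it. This gives
\[
\log\int_{\Omega}e^{-v(\cdot,w)}\,dV_{z}\leq -\frac{n!}{(n+1)^{n}}\,\mathcal{E}_{\Omega}\big(v(\cdot,w)\big)+C,
\]
so that $\int_{\Omega}e^{-v(\cdot,w)}\,dV_z\leq e^{C}\,e^{-\frac{n!}{(n+1)^n}\mathcal{E}_\Omega(v(\cdot,w))}$. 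Setting $u(w):=\frac{n!}{(n+1)^n}\mathcal{E}_\Omega(v(\cdot,w))$, integrating in $w$ over $D$, and using Fubini, the task becomes to estimate $\int_D e^{-u(w)}\,dV_w$ — i.e. a one-variable Brezis-Merle type estimate on the disc $D$ for the function $u$.

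The second step is to understand $u$. The key point — which I expect to be the technical heart of the argument — is that $w\mapsto\mathcal{E}_\Omega(v(\cdot,w))$ is subharmonic on $D$ (it is a marginal of the Monge-Amp\`ere energy along a psh variation, essentially the content of the formula $d_td_t^c\mathcal{E}(\phi_t)=\int(dd^c\Phi)^{n+1}\ge 0$ from Lemma~\ref{lem:prop of energy along geod in domain}, applied with the parameter $w$ ranging over $D$ rather than over a geodesic), and moreover one should be able to compute the total Laplacian mass: $\int_D dd^c u=\frac{n!}{(n+1)^n}\int_D dd^c_w\mathcal{E}_\Omega(v(\cdot,w))$ should equal $\frac{1}{(n+1)^n}\,\mathcal{M}_{\Omega\times D}(v)$ up to the combinatorial factor, by expanding $(dd^c v)^{n+1}$ in mixed terms and integrating by parts in $z$. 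Since $v$ has boundary value zero on $\partial\Omega\times D$, the boundary terms in $z$ drop, leaving exactly the $w$-Laplacian of the energy marginal. Thus $\mathcal{M}_D(u):=\int_D dd^c u = \frac{1}{(n+1)^n}\,\mathcal{M}_{\Omega\times D}(v)\cdot(\text{const})$, and the hypothesis $\mathcal{M}(v)<(n+1)^{n+1}$ translates into $\mathcal{M}_D(u)<(n+1)$ — which is $\frac{1}{1^1}\cdot(\text{bound})$ in dimension one, i.e. exactly the Brezis-Merle threshold $\mathcal{M}(u)^{1/1}<n'$ with $n'=1$, after renormalizing so that the $(n+1)^{n+1}$ is the critical constant for $\Omega\times D$. (More precisely, the sharp constant $n^n$ for dimension $n$ becomes, for $n\to n+1$, the constant $(n+1)^{n+1}$, and the slicing identity converts a deficit below $(n+1)^{n+1}$ in dimension $n+1$ into a deficit below $1$ for the one-variable function $u$, matching $(\mbox{B-M})$ with $n=1$.)

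The third step is then to apply the classical one-dimensional Brezis-Merle inequality — which is the $n=1$ case of $(\mbox{B-M})$, and is elementary (it follows from Jensen/the subharmonicity of $u$ and an explicit computation with the Green function of $D$, cf.\ \cite{br-m}) — to conclude $\int_D e^{-u}\,dV_w\leq A'\big(1-\mathcal{M}_D(u)\big)^{-1}$, which is exactly $A'\big(1-\frac{1}{(n+1)^{n+1}}\mathcal{M}_{\Omega\times D}(v)\big)^{-1}$ after unwinding the normalization. Combining with the bound from Step~1 gives the sharp Brezis-Merle inequality on $\Omega\times D$; the quasi-sharp version follows line by line, the $\delta$ in $(\mbox{M-T})$ propagating into the $\delta$ in $(\mbox{B-M})$ and the power $\delta^{-(n-1)}$ accumulating correctly. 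Finally, for a \emph{general} hyperconvex $\Omega'\subset\C^{n+1}$ (not a product), I would use that $\Omega'$ embeds, after an affine change of coordinates, into a product $\mathcal{B}_n\times D$ of a ball in $\C^n$ with a disc (any bounded domain does, up to scaling), observe that the relevant quantities are monotone under restriction — $\int_{\Omega'}e^{-u}\le\int_{\mathcal B_n\times D}e^{-\tilde u}$ for a suitable psh majorant / extension $\tilde u$ of $u$ to the product with controlled Monge-Amp\`ere mass — and invoke the product case just proved for the ball.

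The main obstacle I anticipate is Step~2: making rigorous the subharmonicity of the energy marginal $w\mapsto\mathcal{E}_\Omega(v(\cdot,w))$ and the identity relating its Laplacian mass to $\mathcal{M}_{\Omega\times D}(v)$, in the non-smooth finite-energy setting and with the boundary of $\Omega$ present. This is exactly where one must push the integration-by-parts machinery of Lemma~\ref{lem:first deriv of energy in domain} and Lemma~\ref{lem:prop of energy along geod in domain} from a one-parameter geodesic to a two-(real-)parameter family over $D$, and where the results of \cite{ce} on $d_td_t^c\mathcal{E}$ in the class $\mathcal{F}$ will be needed; the boundary-term vanishing, which is what forces the use of hyperconvexity and the vanishing boundary values, is the delicate point. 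Everything else is either slicing/Fubini or the classical one-dimensional Brezis-Merle estimate.
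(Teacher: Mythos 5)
Your proposal follows essentially the same route as the paper: slice $v(\cdot,w)$, apply the $n$-dimensional Moser--Trudinger inequality fiberwise, observe that $w\mapsto\mathcal{E}_{\Omega}(v(\cdot,w))$ is subharmonic on $D$ with total Laplacian mass $\mathcal{M}_{\Omega\times D}(v)/(n+1)!$ (the singular version of the $d_td_t^c\mathcal{E}$ formula from \cite{ce}), and finish with the classical one-variable Brezis--Merle inequality on $D$, the identity $\frac{n!}{(n+1)^n}\cdot\frac{1}{(n+1)!}=\frac{1}{(n+1)^{n+1}}$ giving exactly the sharp threshold (so the rescaled marginal has mass $<1$, not $<n+1$ as you wrote). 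For the passage to a general hyperconvex $\Omega'\subset\C^{n+1}$ the ``suitable psh extension'' you want is a \emph{minorant}, not a majorant, and the precise tool is the Cegrell--Zeriahi subextension theorem applied to $\Omega'\subset r(\mathcal{B}\times D)$, which produces $\tilde u\in\mathcal{F}\bigl(r(\mathcal{B}\times D)\bigr)$ with $\tilde u\le u$ and $\int MA(\tilde u)\le\int MA(u)$ --- exactly what transports the inequality from the product to $\Omega'$.
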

\begin{proof}
Let us start with the sharp case. Given $u\in\mathcal{F}(\Omega_{z}\times D_{t})$
we let $v(t):=\mathcal{E}(u(t,\cdot)$ and to fix ideas we first assume
that $u$ is smooth on the closure of $\Omega\times D.$ Applying
the sharp M-T inequality to $u(t,\cdot)$ for $t$ fixed and integrating
over $t\in D$ gives \[
\int_{D}(\int_{\Omega}e^{-u(t,z)}dV(z))dV(t)\leq\int_{D}\exp(-\frac{n!}{(n+1)^{n}}v(t)dV(t),\]
By \ref{eq:second deriv of energy} the function $v(t)$ is a subharmonic
function on $D$ with $\int_{D}d_{t}d_{t}^{c}v=\int_{\Omega\times D}(dd^{c}u)^{n+1}/(n+1)!.$
Hence applying the sharp B-M inequality on the disc $D$ for $n=1$
(which is follows from Green's formula and Jensen's inequality \cite{br-m}
or alternatively from Polya's inequality \cite{ce}) and using that
$\frac{n!}{(n+1)^{n}}\frac{1}{(n+1)!}=\frac{1}{(n+1)^{n+1}}$ finishes
the proof under the smoothness assumption above. The general case
if proved in a similar way, but using the singular variant of \ref{eq:second deriv of energy}
proved in \cite{ce} (Theorem 3.1); compare the proof of Lemma \ref{lem:prop of energy along geod in domain}.
To prove the last statement we recall the \emph{subextension theorem}
\cite{c-z} saying that given $\Omega$ and $\tilde{\Omega}$ two
hyperconvex domains such that $\Omega\subset\tilde{\Omega}$ and a
function $u\in\mathcal{F}(\Omega)$ there is a function $\tilde{u}\in\mathcal{F}(\tilde{\Omega})$
such that $\tilde{u}\leq u$ on $\Omega$ and $\int_{\tilde{\Omega}}MA(\tilde{u})\leq\int_{\Omega}MA(u)$
(up to taking approximations $\tilde{u}$ is obtained by solving the
Dirichlet problem $MA(\tilde{u})=1_{\Omega}MA(u)$ on $\tilde{\Omega}).$
Applying subextension to $\Omega\subset r(\mathcal{B}\times D)$ for
$r$ sufficiently large thus shows that the sharp B-M inequality holds
on any hyperconvex domain $\Omega.$ Finally, if we instead assume
that the quasi-sharp M-T holds in dimension $n-1$ and take $u$ such
that $\mathcal{M}(u)=1$ then repeating the same argument gives, with
$v=(n+1-\delta)\mathcal{E}(u(t,\cdot)$ that 

\[
\int_{D}(\int_{\Omega}e^{-(n+1-\delta)u(t,z)}dV(z))dV(t)\leq C'\delta^{-n}\left(1-\frac{(n+1-\delta)^{n}}{(n+1)^{n}}\right)^{-1}\]
and expanding $1-t^{n}=(1-t)(1+...+t^{n})$ then concludes the proof.
\end{proof}

\subsection{\label{sub:Quasi-B-M-in}Quasi B-M in $\C^{n}$ implies quasi M-T
in $\C^{n}$ and the free energy functional}

In this section it will be convenient to use a different normaliation
of $\mathcal{E}$ obtained by multiplication by $n!,$ i.e. we let
\[
\mathcal{E}(u):=\frac{1}{n+1}\left\langle u,(dd^{c}u)^{n}\right\rangle ,\,\,\,\,\left\langle u,\mu\right\rangle :=\int_{\Omega}u\mu\]
With this new normalization $d\mathcal{E}_{|u}=(dd^{c}u)^{n}$ and
the sharp M-T inequality may be formulated as $\int e^{-(n+1)u}\leq Ce^{-(n+1)\mathcal{E}(u)}.$ 
\begin{prop}
\label{pro:Demailly imples m-t}If the quasi-sharp Brezis-Merle inequality
holds on $\Omega\subset\C^{n}$ than so does the quasi-sharp Moser-Trudinger
inequality.
\end{prop}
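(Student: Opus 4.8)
The statement to prove is Proposition~\ref{pro:Demailly imples m-t}: the quasi-sharp Brezis--Merle inequality on $\Omega \subset \C^n$ implies the quasi-sharp Moser--Trudinger inequality on $\Omega$. The natural strategy is to bootstrap from a \emph{linearized} (mass) inequality to a genuine \emph{energy} inequality by running a variational/convexity argument along a Legendre-transform (or gradient-flow) path in the variable $u$, exploiting that in the new normalization $d\mathcal{E}_{|u} = (dd^c u)^n$. Concretely, I would fix $u \in \mathcal{E}_1(\Omega)$ and consider the family $u_s := su/(something)$ or, better, interpolate between $u$ and a multiple of $u$ using the one-parameter scaling, tracking how $\mathcal{M}$ and $\mathcal{E}$ evolve. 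Since the Monge--Amp\`ere operator is $n$-homogeneous on $\mathcal{H}_0(\Omega)$, we have $\mathcal{M}(\lambda u) = \lambda^n \mathcal{M}(u)$ and $\mathcal{E}(\lambda u) = \lambda^{n+1}\mathcal{E}(u)$, and the quasi-sharp (B-M$'$) bound $\int_\Omega e^{-(n-\delta)u}\,dV \leq A\delta^{-(n-1)}$ holds whenever $\mathcal{M}(u)=1$.

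\textbf{Key steps.} First I would reduce (M-T$'$) to a statement about a single normalized function: given arbitrary $u$ with, say, $\mathcal{M}(u)$ arbitrary and $\mathcal{E}(u)$ controlled, rescale $w := u/\mu$ where $\mu$ is chosen so that $w$ sits at the threshold of applicability of (B-M$'$); the homogeneity relations then convert the mass constraint into an energy statement. Second — and this is the crux — I would control $\log\int_\Omega e^{-(n+1-\delta)u}\,dV$ by ``slicing'' the exponent: write $(n+1-\delta)u$ and compare against $(n-\delta')u$ for a suitable $\delta'$, absorbing the extra unit of exponent by a Young/H\"older-type splitting that costs a factor governed by the energy. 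More precisely, the route is to iterate the B-M bound along the pencil $u_t = tu$, $t \in [0,1]$: differentiating $\log\int_\Omega e^{-u_t}\,dV$ in $t$ produces $-\int_\Omega u \, e^{-u_t}\,dV / \int_\Omega e^{-u_t}\,dV$, and this first-variation quantity should be estimated by the Monge--Amp\`ere mass of $u_t$ via the B-M inequality applied to each slice, after which integrating from $t=0$ to $t=n+1-\delta$ (using homogeneity $\mathcal{M}(tu) = t^n\mathcal{M}(u)$) yields exactly a bound of the form $C\delta^{-(n-1)} e^{-(n+1-\delta)n!\,\mathcal{E}(u)}$ once one recognizes $\int_0^{n+1} t^n\,\mathcal{M}(u)\,dt$-type integrals as reconstructing $(n+1)\mathcal{E}(u)$ up to the normalization $\frac{n!}{(n+1)^n}\cdot\frac{1}{(n+1)!}$ appearing in Proposition~\ref{pro: m-t impl demailly }. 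Third, I would handle the singular case $u \in \mathcal{E}_1(\Omega)$ rather than smooth by approximation, using the continuity properties of $\mathcal{E}$ and $\mathcal{M}$ under decreasing limits recorded in Section~\ref{sub:Notation-and-preliminaries} and in Lemma~\ref{lem:first deriv of energy in domain}.

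\textbf{The main obstacle.} The delicate point is the bookkeeping of the $\delta$-dependence: the B-M inequality loses a factor $\delta^{-(n-1)}$ only at the endpoint $\mathcal{M}=n^n$, so when integrating over the pencil $u_t$ the slices near $t = (n-\delta)/(\text{threshold})$ are the ones where the blow-up factor is activated, and I must check that integrating these near-critical contributions still produces only a \emph{single} power $\delta^{-(n-1)}$ rather than an extra logarithmic or polynomial loss. This requires a careful estimate of $\int_0^{1} (1 - c\,t^n)^{-(n-1)}\,(\text{weight})\,dt$ near the critical $t$, exactly the kind of $1 - t^n = (1-t)(1 + \cdots + t^n)$ expansion already used at the end of the proof of Proposition~\ref{pro: m-t impl demailly }; getting the constants to line up so that the limiting multiplicative constant is the sharp $\frac{n!}{(n+1)^n}$ (equivalently $(n+1)$ in the normalization of this subsection) is where most of the work lies. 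A secondary technical nuisance is justifying differentiation under the integral sign and the first-variation formula for $\log\int e^{-u_t}$ in the finite-energy class, which I would dispatch via the monotone/dominated convergence apparatus already invoked for $\mathcal{E}$ in Lemma~\ref{lem:prop of energy along geod in domain}.
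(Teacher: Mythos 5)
Your proposal has a genuine gap, and it lies exactly at the point you flag as the crux. The plan is to differentiate $\log\int_\Omega e^{-tu}\,dV$ along the scaling pencil $u_t=tu$ and to bound the derivative $-\int u\,e^{-tu}\,dV/\int e^{-tu}\,dV$ by the Monge--Amp\`ere mass of $u_t$ {}``via the B-M inequality applied to each slice.'' But the quasi-sharp Brezis--Merle inequality controls the {\it integral} $\int e^{-v}\,dV$ in terms of $\mathcal M(v)$; it gives no handle on the weighted average of $u$ against the Gibbs measure $e^{-tu}\,dV/\int e^{-tu}\,dV$, which is what the first variation produces. More fundamentally, the energy cannot emerge from the pencil as you suggest: the quantity $\int_0^{n+1}t^n\mathcal M(u)\,dt$ is simply a numerical multiple of $\mathcal M(u)$, not of $\mathcal E(u)=\frac{1}{n+1}\langle u,(dd^cu)^n\rangle$. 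The energy depends on the pairing of $u$ against its own Monge--Amp\`ere measure, and this is not recoverable from the mass together with the distribution function of $u$ --- the only two things the scaling path lets you track. So the scheme cannot produce the factor $e^{-(n+1-\delta)n!\,\mathcal E(u)}$ that M-T$'$ requires.

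The paper's actual mechanism for injecting $\mathcal E(u)$ is a Legendre-duality argument which you gesture at in your opening but do not use. One shows first (formula \ref{eq:bound on F implies bound on G}) that an upper bound on the free energy $F_\gamma(\mu)=E(\mu)-\frac1\gamma D(\mu)$ over finite-energy probability measures implies an upper bound on $\mathcal G_\gamma=\mathcal E-\mathcal L_\gamma$; the energy enters through the concavity estimate $\mathcal E(u)\le E(\mu)+\langle u,\mu\rangle$ obtained by evaluating at the solution $u_\mu$ of $(dd^cu_\mu)^n=\mu$, together with the choice $\mu=e^{-\gamma u}/\int e^{-\gamma u}\,dV$ which realizes the infimum in the variational formula $\mathcal L_\gamma(u)=\inf_\mu(\frac1\gamma D(\mu)+\langle u,\mu\rangle)$. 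Then the quasi-sharp B-M inequality is used only to compute the integrability invariant $\alpha(\Omega,dV)=n$ (with the precise constants $C_{n-\epsilon}\le C+\log(1/\epsilon^{n-1})$), and Lemma \ref{lem:bounded using alpha}, applied with $\gamma=n+1-\delta$ and $t=n-\delta/2$, gives the bound $F_{n+1-\delta}\le C'+\log(1/\delta^{n-1})$, which is where the advertised $\delta^{-(n-1)}$ loss comes from. I would suggest going through the {}``thermodynamical formalism'' of \cite{berm2} and the proof of the implication \ref{eq:bound on F implies bound on G}: this is the missing step in your outline.
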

The proof uses the {}``thermodynamical formalism'' recently introduced
in a the setting of compact Kähler manifolds in \cite{berm2}. The
key point is to show that, by Legendre duality, the (sharp) Moser-Trudinger
inequality is equivalent to yet another inequality, namely one which
coincides with the classical \emph{logarithmic Hardy-Sobolev (LHS)
inequality} when $n=1.$ To make this precise we first define, for
any given positive number $\gamma,$ \[
\mathcal{G}_{\gamma}(u):=\mathcal{E}(u)-\mathcal{L_{\gamma}}(u),\,\,\,\,\mathcal{L_{\gamma}}(u)=-\frac{1}{\gamma}\log\int_{X}e^{-\gamma u}dV,\]
 where $u\in\mathcal{E}^{1}(\Omega)$ so that $\mathcal{G}_{\gamma}$
is bounded from above for $\gamma=n+1$ precisely when the sharp Moser-Trudinger
inequality holds. As for the LHS type inequality referred to above
it is said to hold when the following\emph{ free energy functional}
$F_{\gamma}$ is bounded from above:

\[
F_{\gamma}(\mu):=E(\mu)-\frac{1}{\gamma}D(\mu)\]
 where $\mu$ is a probability measure on $\bar{\Omega}$ with $E(\mu)<\infty,$
where $E(\mu)$ is the (pluricomplex) energy of $\mu$ and $D(\mu)$
is its relative entropy, whose definitions we next recall. Following
\cite{ce0} a measure $\mu$ on $\Omega$ is said to have finite\emph{
(pluricomplex) energy} $E(\mu)$ if it admits a finite energy potential
$u_{\mu},$ i.e. $u_{\mu}\in\mathcal{E}^{1}(\Omega)$ and \begin{equation}
(dd^{c}u_{\mu})^{n}=\mu\label{eq:inhomo ma eq in domain}\end{equation}
One may then define its energy by \[
E(\mu):=-\frac{n}{n+1}\left\langle u_{\mu},\mu\right\rangle \]
 which is finite and non-negative (the reason for our normalization
appears in formula \ref{eq:variational def of energy} below). If
$u_{\mu}$ does not exist one sets $E(\mu)=\infty.$ We also recall
the classical notion of relative entropy: given a measure $\mu$ its
\emph{relative entropy} (wrt $dV)$ is defined as \[
D(\mu):=\int\log(\mu/dV)\mu\]
if $\mu$ is a probability measure which is absolutely continuous
wrt $dV$ (with density $\mu/dV)$ and otherwise $D(\mu):=\infty.$
To see the relation to the Moser-Trudinger inequality we recall that
$E$ and $\frac{1}{\gamma}D$ can be realized as Legendre type transforms
of the concave functional $\mathcal{E}$ and $\mathcal{L_{\gamma}},$
respectively. Indeed, it is a classical fact (see \cite{berm2} and
references therein) that \begin{equation}
\frac{1}{\gamma}D(\mu)=\mathcal{L_{\gamma}}^{*}(\mu):=\sup_{u\in C^{0}(X)}\left(-\frac{1}{\gamma}\log\int_{X}e^{-\gamma u}\mu_{0}-\left\langle u,\mu\right\rangle \right)\label{eq:entropy as legendre tr}\end{equation}
Moreover, it follows from the concavity of $\mathcal{E}$ and the
solvability of equation \ref{eq:inhomo ma eq in domain} that %
\footnote{In fact, using a variational approach the potential $u_{\mu}$ above
may be obtained directly by maximizing the functional in the rhs of
\ref{eq:variational def of energy}. This was recently shown in the
Kähler setting in \cite{bbgz} and in the setting of domains in \cite{a-c-c}
(compare section \ref{sec:Existence-of-optimizers}). %
} \begin{equation}
E(\mu)=\sup_{u\in\mathcal{E}^{1}(\Omega)}\mathcal{E}(u)-\left\langle u,\mu\right\rangle \label{eq:variational def of energy}\end{equation}
The idea is now to first show that \begin{equation}
F_{\gamma}\leq C_{\gamma}\implies\mathcal{G}_{\gamma}:=\mathcal{E}-\mathcal{L_{\gamma}}\leq C_{\gamma}\label{eq:bound on F implies bound on G}\end{equation}
 and then prove that $F_{\gamma}\leq C_{\gamma}$ for $\gamma<n+1,$
giving the desired M-T inequality. If $E$ were a proper Legendre
transform of $\mathcal{E}$ (i.e. if the sup in \ref{eq:variational def of energy}
could be taken over $C^{0}(\bar{\Omega})$ then \ref{eq:bound on F implies bound on G}
would follow immediately from the fact that the Legendre transform
is involutive together with the trivial implication \[
f\leq g+C\implies f^{*}\leq g^{*}+C\left(\implies f\leq g+C\right)\]
In the Kähler setting it was explained how to use a certain projection
operator $P$ to realize $E$ the Legendre transform of $\mathcal{E\circ}P$,
but for the implication \ref{eq:bound on F implies bound on G} this
will not be needed. Indeed, by the concavity of $\mathcal{E}$ on
$\mathcal{E}^{1}(\Omega)$ we have, for any fixed measure $\mu,$
\begin{equation}
\mathcal{E}(u)\leq\mathcal{E}(u_{\mu})+\left\langle u-u_{\mu},\mu\right\rangle =E(\mu)+\left\langle u,\mu\right\rangle =F_{\gamma}(\mu)+\left(\frac{1}{\gamma}D(\mu)+\left\langle u,\mu\right\rangle \right)\label{eq:pf of f bounded implies}\end{equation}
The proof may now be concluded by noting that (compare \ref{eq:entropy as legendre tr})
\[
\inf_{\mu}(\frac{1}{\gamma}D(\mu)+\left\langle u,\mu\right\rangle )=\mathcal{L_{\gamma}}(u)\]
where the infimum is taken over all measures on $\Omega.$ More concretely,
we may by approximation, assume that $u\in\mathcal{H}_{0}(\Omega)$
and then note that $\mu=e^{-\gamma u}/\int e^{-\gamma u}dV$ realizes
the inf above in \ref{eq:pf of f bounded implies}, so that the previous
argument gives \[
\mathcal{G}_{\gamma}(u)\leq F_{\mu}(e^{-\gamma u}/\int e^{-\gamma u}dV)\leq C_{\gamma}\]
proving \ref{eq:bound on F implies bound on G}.

Finally, to estimate $F_{\gamma}$ we next define the following general
invariant of a pair $(\Omega,\mu_{0})$ where $\mu_{0}$ is a measure
on $\Omega:$ \begin{equation}
\alpha(\Omega,\mu_{0}):=\sup\left\{ t:\,\exists C_{t};\,\int_{\Omega}e^{-tu}d\mu_{0}\leq C_{t}\,\,\forall u\in\mathcal{H}_{0}(\Omega)_{b}\cap\{\int_{\Omega}(dd^{c}u)^{n}/n!=1\right\} \label{eq:alpha invariant for domain}\end{equation}

\begin{lem}
\label{lem:bounded using alpha}If $\gamma<\alpha\frac{(n+1)}{n},$
then $F_{\gamma}(\mu)$ is bounded from above, i.e. $F_{\gamma}(\mu)\leq C_{\gamma}.$
More precisely, for any $t<\alpha$ \[
F_{\gamma}(\mu)\leq(\frac{t}{\gamma}-\frac{n}{n+1})\left\langle u_{\mu},\mu\right\rangle +\frac{t}{\gamma}C_{t}\]
where $C_{t}$ is the minimum of $\mathcal{L}_{t}(u)$ over all $u\in\mathcal{H}_{0}(\Omega)_{b}\cap\{\int_{\Omega}(dd^{c}u)^{n}=1.$ \end{lem}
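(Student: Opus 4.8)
The plan is to exploit the variational characterization \ref{eq:variational def of energy} of the energy $E(\mu)$ together with the definition \ref{eq:alpha invariant for domain} of the invariant $\alpha = \alpha(\Omega, dV)$. Fix a measure $\mu$ with finite energy; without loss of generality $E(\mu)<\infty$, so there is a potential $u_\mu \in \mathcal{E}^1(\Omega)$ with $(dd^c u_\mu)^n = \mu$ and $E(\mu) = -\frac{n}{n+1}\langle u_\mu,\mu\rangle$. Fix any $t<\alpha$. The first step is to rescale: since $\mathcal{E}$ (in the present normalization) satisfies $d\mathcal{E}_{|u} = (dd^c u)^n$ and the Monge-Amp\`ere operator is $n$-homogeneous, replacing $u_\mu$ by $\lambda u_\mu$ multiplies $\mathcal{M}(u_\mu)$ by $\lambda^n$; so choose $\lambda$ so that the rescaled potential has unit mass and apply the definition of $\alpha$ to it. Concretely one writes $\langle u_\mu,\mu\rangle = \langle u_\mu,(dd^cu_\mu)^n\rangle$, and one wants to bound $\mathcal{L}_\gamma(u_\mu) = -\frac1\gamma \log\int e^{-\gamma u_\mu}\,dV$ from below in terms of $\langle u_\mu,\mu\rangle$ and the constant $C_t$.

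The second, and main, step is the interpolation inequality. Since $u_\mu\le 0$ and $u_\mu\in\mathcal{E}^1$, one uses a H\"older-type argument: if $v:=u_\mu/\|u_\mu\|$ denotes the potential normalized to unit Monge-Amp\`ere mass (so $v = u_\mu/M^{1/n}$ where $M := \mathcal{M}(u_\mu) = \langle$ total mass $\rangle$, careful with the $n!$), then by the definition of $\alpha$, $\int_\Omega e^{-tv}\,dV \le C_t$, i.e. $\int_\Omega e^{-t u_\mu / M^{1/n}}\,dV \le C_t$. Feeding $\gamma$ and $t$ into the estimate and using $\log\int e^{-\gamma u_\mu} \le$ (something linear in $\langle u_\mu,\mu\rangle$) — this is where one invokes that $\gamma/t \le$ the right exponent and Jensen/H\"older to pass from the $t$-exponential bound to the $\gamma$-exponential quantity. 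The upshot is an inequality of the form $-\frac1\gamma\log\int e^{-\gamma u_\mu}\,dV \ge \frac{t}{\gamma}\cdot\frac{1}{n}\,|\langle u_\mu,\mu\rangle| \cdot(\text{correction}) - \frac{t}{\gamma}C_t$ once the homogeneity of the mass constraint ($M$ to the power $1/n$ against $u$ linear) is tracked correctly. Combining this with $E(\mu) = -\frac{n}{n+1}\langle u_\mu,\mu\rangle$ gives
\[
F_\gamma(\mu) = E(\mu) - \tfrac1\gamma D(\mu) \le E(\mu) + \mathcal{L}_\gamma(u_\mu) \le \Big(\tfrac{t}{\gamma}-\tfrac{n}{n+1}\Big)\langle u_\mu,\mu\rangle + \tfrac{t}{\gamma}C_t,
\]
where in the first inequality I used $-D(\mu)\le \langle u_\mu,\mu\rangle - \mathcal{L}_\gamma(u_\mu)\cdot\gamma$... — more cleanly, $\frac1\gamma D(\mu) = \mathcal{L}_\gamma^*(\mu) \ge -\mathcal{L}_\gamma(u_\mu) - \langle u_\mu,\mu\rangle$ wait; the correct route is: by \ref{eq:entropy as legendre tr}, $\frac1\gamma D(\mu) \ge \mathcal{L}_\gamma(u_\mu) - \langle u_\mu,\mu\rangle$ is false in sign, so instead one directly estimates $\frac1\gamma D(\mu) \ge -\frac1\gamma\log\int e^{-\gamma u_\mu} + \langle u_\mu,\mu\rangle$ is also not it — the honest version is to plug $u_\mu$ into \ref{eq:entropy as legendre tr} to get $\frac1\gamma D(\mu)\ge \mathcal{L}_\gamma(u_\mu)+\langle u_\mu,\mu\rangle$ (taking $u=-u_\mu$ there, noting signs), hence $E(\mu)-\frac1\gamma D(\mu) \le E(\mu) - \mathcal{L}_\gamma(u_\mu) - \langle u_\mu,\mu\rangle$, and then one substitutes $E(\mu)=-\frac{n}{n+1}\langle u_\mu,\mu\rangle$ and the lower bound on $-\mathcal{L}_\gamma(u_\mu)$ coming from the $\alpha$-estimate. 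Finally, the coefficient $\big(\frac{t}{\gamma}-\frac{n}{n+1}\big)$ of $\langle u_\mu,\mu\rangle$ is negative (for $\gamma < \alpha\frac{n+1}{n}$ one can pick $t<\alpha$ with $\frac t\gamma > \frac{n}{n+1}$), and $\langle u_\mu,\mu\rangle \le 0$, so that term is $\le 0$; this yields $F_\gamma(\mu)\le \frac t\gamma C_t$, i.e. the claimed uniform bound, and the sharper displayed inequality before discarding that term.

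The hard part will be bookkeeping the homogeneity and the precise constant: matching the $n!$ in the normalization $\int(dd^cu)^n/n! = 1$ in \ref{eq:alpha invariant for domain} against the $\mathcal{E}(u) = \frac{1}{n+1}\langle u,(dd^cu)^n\rangle$ normalization used in this subsection, and getting the exponent $\alpha\frac{n+1}{n}$ exactly right out of the scaling $v = u_\mu/\mathcal{M}(u_\mu)^{1/n}$ combined with $E(\mu)\propto \langle u_\mu,\mu\rangle \propto \mathcal{M}(u_\mu)^{(n+1)/n}$. Everything else (the Legendre duality facts, solvability of the Monge-Amp\`ere equation, concavity of $\mathcal{E}$) is quoted from earlier in the paper, and the sign-chasing in the chain above just needs to be done once carefully.
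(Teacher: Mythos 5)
Your overall strategy is on the right track — Legendre duality for the entropy, plugging in the potential $u_{\mu}$, using $E(\mu)=-\tfrac{n}{n+1}\langle u_{\mu},\mu\rangle$, and exploiting the sign $\langle u_{\mu},\mu\rangle\le 0$ together with a choice of $t<\alpha$ close to $\alpha$ — but the crucial middle step is both muddled and, as written, incorrect, and for a reason that actually matters.

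The gap: at the end of your sign-chase you reduce the estimate to a lower bound on $-\mathcal{L}_{\gamma}(u_{\mu})$ ``coming from the $\alpha$-estimate.'' But the definition of $\alpha$ only controls $\mathcal{L}_{t}$ for $t<\alpha$, and the whole point of the lemma is to allow $\gamma$ to range up to $\alpha\tfrac{n+1}{n}$, i.e.\ including $\gamma\geq\alpha$. For such $\gamma$ there is no bound on $\mathcal{L}_{\gamma}(u_{\mu})$ from $\alpha$, so your chain breaks exactly where the statement is nontrivial. You gesture at repairing this with a ``H\"older/interpolation'' step from $\int e^{-tu_{\mu}}$ to $\int e^{-\gamma u_{\mu}}$, but you never carry it out, and it is not needed. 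The paper's fix is cleaner: apply the Legendre identity \ref{eq:entropy as legendre tr} \emph{at level $t$}, not $\gamma$. Namely $\tfrac{1}{\gamma}D(\mu)=\tfrac{t}{\gamma}\cdot\tfrac{1}{t}D(\mu)=\tfrac{t}{\gamma}\,\mathcal{L}_{t}^{*}(\mu)\geq\tfrac{t}{\gamma}\bigl(\mathcal{L}_{t}(u_{\mu})-\langle u_{\mu},\mu\rangle\bigr)$, and now $\mathcal{L}_{t}(u_{\mu})\geq C_{t}$ is exactly what $t<\alpha$ gives. The $t/\gamma$ factor appears automatically as a scaling of the entropy, not as an interpolation between exponentials, and this is what produces the coefficient $\tfrac{t}{\gamma}-\tfrac{n}{n+1}$.

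Two smaller points. First, the rescaling $v=u_{\mu}/\mathcal{M}(u_{\mu})^{1/n}$ is superfluous: $\mu$ is a probability measure and $(dd^{c}u_{\mu})^{n}=\mu$, so $\mathcal{M}(u_{\mu})=1$ already and the $\alpha$-bound applies to $u_{\mu}$ directly; the homogeneity bookkeeping you flag as ``the hard part'' simply does not arise. Second, your Legendre sign is wrong at the step ``taking $u=-u_{\mu}$ there'': plugging $u=u_{\mu}$ into \ref{eq:entropy as legendre tr} gives $\tfrac{1}{t}D(\mu)\geq\mathcal{L}_{t}(u_{\mu})-\langle u_{\mu},\mu\rangle$ (a minus sign, and with $t$, not $\gamma$); substituting $-u_{\mu}$ yields $\log\int e^{t u_{\mu}}dV$, which is not the useful quantity. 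Fixing these two items collapses the argument to the paper's three lines.
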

\begin{proof}
Given $\gamma$ we fix $t<\alpha:=\alpha(\Omega,\mu_{0}).$ By the
definition of $\alpha$ we have $\mathcal{L}_{t}(u)\geq-C_{t}$ if
$u\in\mathcal{H}_{0}(\Omega)\cap\{\int_{\Omega}(dd^{c}u)^{n}=1$ and
hence \[
\frac{1}{t}D(\mu)=\mathcal{L}_{t}^{*}(\mu)\geq\mathcal{L}_{t}(u_{\mu})-\left\langle u_{\mu},\mu\right\rangle \geq-\left\langle u,\mu_{\mu}\right\rangle -C_{t}\]
As a consequence \[
F_{\gamma}(\mu)\leq(-\frac{n}{n+1}+\frac{t}{\gamma})\left\langle u_{\mu},\mu\right\rangle +tC_{t}\]
Given $\gamma$ such that $\gamma<\alpha\frac{(n+1)}{n},$ we may
now choose $t$ sufficiently close to $\alpha$ so that the multiplicative
constant above is strictly positive, thus concluding the proof.
\end{proof}
Assume now that the quasi-sharp BM-inequality holds in $\Omega.$
The point is that this implies that $\alpha(\Omega,dV)=n$ and the
previous Lemma then shows that $F_{n+1-\delta}$ is bounded from above.
However, we can actually be more precise wrt the depends on $\delta.$
Indeed, according to the formulation \ref{eq:alt def of quasi sharp b-m}
we have that $C_{n-\epsilon}\leq C+\log(1/\epsilon^{n-1})$ where
$C_{t}$ is defined as in the previous lemma (with $\mu_{0}=dV).$
Applying the previous lemma with $\gamma=n+1-\delta$ and $t=n-\delta/2$
hence gives \[
F_{n+1-\delta}(\mu)\leq C_{n-\delta/2}\leq C'+\log(1/\delta^{n-1})\]
 The proof of Prop \ref{pro:Demailly imples m-t} is now concluded
by using \ref{eq:bound on F implies bound on G}. 
\begin{rem}
When $\mu_{0}=dV$ is any volume form on $\bar{\Omega}$ $\alpha:=\alpha(\Omega,dV)$
defines an invariant of a domain $\Omega$ which can be seen as a
variant of Tian's $\alpha-$invariant for a Kähler manifold $(X,\omega)$
(or rather the class $[\omega]).$ The difference is that in the latter
case the Monge-Ampère mass is, of course, determined by $[\omega]$
and hence independent of $u.$ In this letter setting $-\gamma F_{\mu}(MA(u))$
coincides with Mabuchi's K-energy functional, which plays a key role
in Kähler geometry (compare the discussion in \cite{berm2})
\end{rem}

\subsection{Proof of Theorem \ref{thm:quasi-sharp m-t in ball intro} }

The sharp Moser-Trudinger inequality holds when $n=1$ in the disc
$D$ \cite{m}. Hence combining Prop \ref{pro: m-t impl demailly }
and Prop \ref{pro: m-t impl demailly } simultaneously prove the inequalities
in Theorem \ref{thm:quasi-sharp m-t in ball intro} and Theorem \ref{thm:quasi-sharp b-m intro}. 

As for the sharpness of the multiplicative constants in inequalities
we make the following remark which concludes the proof of Theorem
\ref{thm:quasi-sharp m-t in ball intro}...
\begin{rem}
\label{rem:no better constants}Let $\Omega:=\mathcal{B}$ be the
unit-ball in $\C^{n}$ and set $u:=\log|z|^{2}$ so that $(dd^{c}u)^{n}=\delta_{0}.$
Letting $u_{t}:=tu$ for $t<1$ gives $\int_{B}e^{-u_{t}}=\frac{1}{1-t/n}\sim\frac{1}{(1-\frac{t^{n}}{n^{n}})}$
as $t\rightarrow1^{-}.$ Moreover, since $MA(u_{t})=t^{n}$ this shows
that the sharp Brezis-Merle inequality cannot hold on $\mathcal{B}$
with a better coefficient than $\frac{1}{n^{n}},$ nor with a smaller
power in the rhs. Using the subextenstion theorem (see the proof of
Theorem ) gives the same conclusion for any hyperconvex domain $\Omega$
(alternatively when can apply the same argument with $u$ replaced
by the pluricomplex Green function $g_{z}$ with a pole at any fixed
point $z$ in $\Omega).$ Finally, by Prop \ref{pro: m-t impl demailly }
this also shows that the coefficient $n!/(n+1)^{n}$ in the sharp
M-T inequality cannot be improved for any hyperconvex domain $\Omega.$ 
\end{rem}

\section{Relations between the various inequalities }

Let $u\leq0$ be a, say continuous, function on a topological space
$X$ and $dV$ a finite measure on $X.$ We let \[
E(t):=\int e^{-tu}dV\]
and \[
V(s):=\mbox{Vol }\{u<-s\}:=\int_{\{u<-s\}}dV\]
Then $E(t)/t$ and $V(s)$ are (up to signs) related by Laplace transforms.
Indeed, by the push-forward formula and integration by parts \[
E(t):=t\int_{0}^{\infty}e^{ts}V(s)\]
According to a well-known principle the Laplace transform is asymptotically
described by the Legendre transform: \[
E(t)\lesssim e^{f(t)}\,\,\,"\iff"\,\,\, V(s)\lesssim e^{-f^{*}(s)}\]
(as $t$ and $s$ tend to infinity), where $f$ is assumed convex
and $f^{*}(s)$ is its Legendre transform: \[
f^{*}(s):=\sup_{t}(st-f(t))\]
There are various ways of formulating this principle precisely but
for our purposes the following basic lemma will be sufficient:
\begin{lem}
\label{lem: laplace est}If $E(t)\leq Ce^{f(t)},$ then $V(t)\leq Ce^{-f^{*}(s)}.$
Conversely, if $V(t)\leq Ce^{-g(s)}$ then for any $\delta>0$ there
is a constant $C_{\delta}$ such that $E(t)\leq C_{\delta}e^{g^{*}(t+\delta)}.$ \end{lem}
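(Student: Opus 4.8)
The plan is to prove the two implications separately. The forward one is just Chebyshev's inequality followed by an optimization over $t$, and the backward one uses the Laplace-type identity $E(t)=t\int_0^{\infty}e^{ts}V(s)\,ds$ recorded above — more precisely $E(t)=\mathrm{Vol}(X)+t\int_0^{\infty}e^{ts}V(s)\,ds$, the extra term being harmless — together with a one-line ``$\delta$-shift'' of the Legendre transform.

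\textbf{Forward direction.} Write $w:=-u\geq 0$. For every admissible $t\geq 0$ and every $s\geq 0$, on the set $\{w>s\}$ one has $e^{tw}>e^{ts}$, so Chebyshev's inequality gives
\[
E(t)=\int_X e^{tw}\,dV\ \geq\ e^{ts}\,\mathrm{Vol}\{w>s\}\ =\ e^{ts}V(s).
\]
Inserting the hypothesis $E(t)\leq Ce^{f(t)}$ yields $V(s)\leq Ce^{\,f(t)-ts}$ for all such $t$; taking the infimum over $t$ gives $V(s)\leq Ce^{-\sup_t(st-f(t))}=Ce^{-f^{*}(s)}$. No convexity of $f$ is needed, since the infimum sees only the convex envelope of $f$.

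\textbf{Backward direction.} Fix $\delta>0$ and plug $V(s)\leq Ce^{-g(s)}$ into the identity above:
\[
E(t)\ \leq\ \mathrm{Vol}(X)+Ct\int_0^{\infty}e^{\,ts-g(s)}\,ds .
\]
The exponent obeys, for every $s\geq 0$, the elementary bound
\[
ts-g(s)=(t+\delta)s-g(s)-\delta s\ \leq\ g^{*}(t+\delta)-\delta s
\]
by definition of the Legendre transform, so $\int_0^{\infty}e^{\,ts-g(s)}\,ds\leq e^{g^{*}(t+\delta)}\int_0^{\infty}e^{-\delta s}\,ds=\delta^{-1}e^{g^{*}(t+\delta)}$ and hence $E(t)\leq \mathrm{Vol}(X)+(Ct/\delta)\,e^{g^{*}(t+\delta)}$. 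This is already the claimed bound up to the prefactor $t$; to get a genuine constant one splits $\delta=\tfrac\delta2+\tfrac\delta2$ and uses that, in the situations where the lemma is applied, $g$ is finite, so $g^{*}$ is convex and superlinear and $g^{*}(t+\delta)-g^{*}(t+\tfrac\delta2)$ grows fast enough to absorb $\log\!\big(\mathrm{Vol}(X)+Ct/\delta\big)$. This produces a $C_{\delta}$ depending only on $\delta$ (and on $C$, $\mathrm{Vol}(X)$), and relabelling $\delta\mapsto 2\delta$ concludes the argument.

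The one genuine subtlety — and the reason the ``$+\delta$'' in the statement cannot be removed — is precisely the convergence of $\int_0^{\infty}e^{\,ts-g(s)}\,ds$: without giving up $\delta$ one would at best control this by $e^{g^{*}(t)}$ times the (possibly infinite) measure of $\{s:\ ts-g(s)=g^{*}(t)\}$, which is useless. Everything else is routine; in particular, in the Moser--Trudinger applications $g^{*}(s)\sim c\,s^{(n+1)/n}$ with exponent $>1$, so the absorption of the $t$-prefactor is immediate and one need not dwell on it.
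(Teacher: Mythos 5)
Your proof is correct and follows the same route as the paper: Chebyshev's inequality plus optimization over $t$ for the forward direction, and the Laplace identity $E(t)=\mathrm{Vol}(X)+t\int_0^\infty e^{ts}V(s)\,ds$ combined with the $\delta$-shift $ts-g(s)=\bigl((t+\delta)s-g(s)\bigr)-\delta s$ for the converse. The paper's own proof silently drops both the $\mathrm{Vol}(X)$ term and the prefactor $t$ in front of the integral (simply setting $C_\delta=C/\delta$), so your extra half-step — re-shifting $\delta\mapsto\delta/2$ and using superlinearity of $g^*$ to absorb the $t$-dependence into a genuine constant — is a legitimate tightening of an argument the paper leaves slightly loose, and you correctly note that this is where one actually needs $g^*$ to grow faster than linearly, as it does in every application in the paper.
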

\begin{proof}
Fix $t\in\R.$ On the subset $\{u<-s\}$ of $X$ we have $1<e^{-st}e^{-tu}$
and hence $V(s)\leq e^{-st}\int_{X}e^{-tu}\leq Ce^{-st+f(t)}.$ Taking
the infimum over all $t$ then proves the first inequality. The second
inequality follows immediately from the definitions if we rewrite
$ts-g(s)=\left((t+\delta)s-g(s)\right)-\delta s$ and let $C_{\delta}=C\int_{0}^{\infty}e^{-\delta s}ds=C/\delta.$
\end{proof}
We will apply the previous lemma to the case when $f(t)$ is homogeneous
and use the following basic relations (assuming $p>1)$ \begin{equation}
f(t)=\frac{1}{a}s^{p}/p\iff f^{*}(s):=a^{(q-1)}t^{q}/q\label{eq:legendre transforms for homo}\end{equation}
where $1/p+1/q=1$ (the case $a=1$ is immediate and implies the general
case by scaling). More precisely, in our case we will have $p=(n+1)/n$
and hence $q=n+1$ and vice versa. 
\begin{cor}
(of Theorem \ref{thm:aubin hyp intro}): Let $(X,\omega)$ be a compact
Kähler manifold and $u\in\mathcal{H}_{0}(X,\omega).$ Then there are
constants $A$ and $B$ such that \[
\mbox{Vol }_{\omega}\{u<-s\}\leq Ce^{-B\frac{1}{(-\mathcal{E}_{\omega}(u))^{1/n}}s^{(n+1)/n}}\]
 More precisely, we may replace the exponent above by \[
-\frac{n}{(-\mathcal{E}_{\omega}(u))^{1/n}(n+1)^{(1+1/n)}}s^{(n+1)/n}(1+o(1))\]
 as $s\rightarrow\infty.$
\end{cor}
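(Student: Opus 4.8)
The plan is to derive the stated distributional inequality directly from the Moser--Trudinger inequality of Theorem \ref{thm:aubin hyp intro} by applying the Laplace/Legendre principle encapsulated in Lemma \ref{lem: laplace est}. Fix $u\in\mathcal{H}_0(X,\omega)$ and write $m:=-\mathcal{E}_\omega(u)>0$. Theorem \ref{thm:aubin hyp intro} applied with the parameter $k=t$ gives, for every $t>0$,
\[
\log\int_X e^{-tu}\,dV\le A\,t^{n+1}m+B,
\]
so with $E(t):=\int_X e^{-tu}\,dV$ we have $E(t)\le Ce^{f(t)}$ where $f(t)=Am\,t^{n+1}$ and $C=e^{B}$. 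Lemma \ref{lem: laplace est} then yields $V(s)=\mathrm{Vol}_\omega\{u<-s\}\le Ce^{-f^*(s)}$, and it only remains to compute the Legendre transform of the monomial $f(t)=Am\,t^{n+1}$. Using \ref{eq:legendre transforms for homo} with $p=n+1$, $q=(n+1)/n$ and $1/a=(n+1)Am$, one gets $f^*(s)=\big((n+1)Am\big)^{-1/n}\,\tfrac{n}{n+1}\,s^{(n+1)/n}$, which has exactly the claimed shape $C e^{-B' m^{-1/n}s^{(n+1)/n}}$ with $B'=\tfrac{n}{n+1}\big((n+1)A\big)^{-1/n}$. This proves the first, crude assertion.

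For the sharper statement one must replace the fixed constant $A$ by the asymptotically optimal one. The refined part of Theorem \ref{thm:aubin hyp intro} (see \ref{eq:upper lower bounds on bergman} and the discussion after \ref{eq:bergman kernel as}) says that in the inequality $\log\int_X e^{-tu}\,dV\le A_t\, t^{n+1}m + B_t$ one may take $A_t=1+O(1/t)$ (and $B_t=O(1)$, or $B_t$ depending mildly on $t$). Thus $E(t)\le e^{(1+o(1))\,t^{n+1}m}$ as $t\to\infty$. Here a small technical point arises: Lemma \ref{lem: laplace est} is stated for a genuinely convex $f$, whereas $t\mapsto (1+O(1/t))t^{n+1}m$ need only be eventually convex; I would handle this by dominating, for $t\ge t_0$, $E(t)\le C_\varepsilon e^{(1+\varepsilon)m t^{n+1}}$ for any $\varepsilon>0$ (absorbing the behavior on $[0,t_0]$ into the constant), applying the lemma with $f_\varepsilon(t)=(1+\varepsilon)m\,t^{n+1}$, computing $f_\varepsilon^*(s)=\big((n+1)(1+\varepsilon)m\big)^{-1/n}\tfrac{n}{n+1}s^{(n+1)/n}$, and then letting $\varepsilon\to0$. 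Since $\big((n+1)(1+\varepsilon)\big)^{-1/n}\to (n+1)^{-1/n}$, this gives the exponent
\[
-\frac{n}{(n+1)^{1/n}(n+1)}\,m^{-1/n}s^{(n+1)/n}(1+o(1))=-\frac{n}{(-\mathcal{E}_\omega(u))^{1/n}(n+1)^{1+1/n}}\,s^{(n+1)/n}(1+o(1)),
\]
as $s\to\infty$, which is precisely the refined bound claimed.

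The main obstacle is not any single hard estimate but rather bookkeeping: making the passage from the "$k$-dependent" form of Moser--Trudinger to a clean Legendre transform while keeping track of which constants are genuinely universal versus $t$-dependent, and justifying the use of Lemma \ref{lem: laplace est} despite $f$ being only asymptotically convex and the bound $E(t)\le Ce^{f(t)}$ holding with a $t$-dependent sub-leading term. Once one commits to proving the estimate only in the $s\to\infty$ asymptotic regime (as the statement does, via the $(1+o(1))$), the $\varepsilon$-approximation argument above dispatches this cleanly, and everything else is the elementary computation of the Legendre transform of a monomial recorded in \ref{eq:legendre transforms for homo}.
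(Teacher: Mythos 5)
Your proposal is correct and follows the same route the paper intends: apply Theorem \ref{thm:aubin hyp intro} with $k=t$, invoke the Laplace--Legendre Lemma \ref{lem: laplace est}, and compute the Legendre transform of the monomial $Am\,t^{n+1}$ via \ref{eq:legendre transforms for homo}, with the refined exponent obtained by an $\epsilon$-approximation exploiting $A_k=1+O(1/k)$. The only minor overcaution is the worry about convexity of $f$: the forward direction of Lemma \ref{lem: laplace est} (the Chernoff-type bound $V(s)\le e^{-st}E(t)$) needs no convexity of $f$ whatsoever, and in any case $t\mapsto(1+C_1/t)t^{n+1}m=mt^{n+1}+C_1mt^n$ is genuinely convex on $t>0$.
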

From the first volume estimate in the previous corollary we see that
the $L^{p}-$norms of $u$ may be estimated as \[
\int_{X}(-u)^{p}dV=\int_{0}^{\infty}V(s)d(s^{p})\leq C\Gamma(\frac{n}{n+1}p)\left(\frac{1}{B}\right)^{pn/(n+1)}(-\mathcal{E}_{\omega}(u))^{p/(n+1)}\]
(after setting $x=s^{(n+1)/n}$ and using $\Gamma(x)x=\Gamma(x+1),$
for $\Gamma(x):=\int_{0}^{\infty}s^{x-1}e^{-s}ds).$ Using $\Gamma(m)=(m-1)!$
and Stirling's approximation $m!\sim(m/e)^{m}$ hence gives the Sobolev
type inequality \ref{eq:sob intro} from the introduction. 

The inequality \ref{eq:trudingers ineq on kahler} can now be deduced
from the previous Sobolev type inequality (compare \cite{tr}). Indeed,
assuming first that $-\mathcal{E}_{\omega}(u)=1$ gives \[
\int e^{B(1-\delta)(-u)^{n+1/n}}dV=\sum_{p=1}^{\infty}\frac{B^{j}}{j!}\int_{X}(-u)^{j(n+1)/n}dV\leq\sum_{p\in\N(n+1)/n}\frac{1}{p}(1-\delta)^{pn/(n+1)}\]
which is finite for any $\delta>0$ and the general case then follows
by scaling. Note in particular, that when $E(t)\leq e^{At^{n+1}}$
with $A=(n+1)^{-(n+1)}$ then $V(s)\leq e^{-Bs^{(n+1)/n}}$ with $B=n$
which proves the last statement in Theorem \ref{thm:quasi-sharp m-t in ball intro}.

\section{\label{sec:Remarks-on-the}Remarks on the optimal constants }

In this section we will compare our results with Aubin's conjectures
\cite{au,au2} (and partial results). To this end we first have to
compare our notations, which differ slightly. There are two reasons
for the differences which come from (1) the choice of energy functional
(2) the normalizations of the energy functional. We start with the
energy functionals (in our normalizations). Given the functional $\mathcal{E}_{\omega}$
which we recall may be defined as a primitive of the Monge-Ampère
operator one defines \[
J_{\omega}(u):=-\mathcal{E}_{\omega}(u)+\int u\omega^{n}/n!\]
and \[
I_{\omega}(u):=\frac{1}{n!}\int(-u)\left(\omega_{u}{}^{n}-\omega^{n}\right)\]
In particular, the functionals $J_{\omega}$ and $I_{\omega}$ are
both $\R-$invariant and semi-positive \cite{au} and when $\omega=0$
(as in the $\C^{n}-$setting) they coincide. In general, they are
equivalent up to multiplicative factors \cite{au}: \[
J_{\omega}\leq I_{\omega}\leq(n+1)J_{\omega}\]
However, Aubin's normalizations are slightly different and obtained
by replacing the factor $1/n!$ above by $(2\pi)^{n}/(n-1)!.$ In
particular, \[
(-\mathcal{E_{\omega}})=d_{n}J_{\omega}^{(A)},\,\,\, d_{n}:=\frac{1}{n}\frac{1}{(2\pi)^{n}}\]
if $\int u\omega=0,$ where the super script $A$ refers to Aubin's
normalizations. In this notation Aubin's general {}``Hypothèse fondamentale''
as formulated in \cite{au} asserts that there exist positive constants
$\xi$ and $C$ such that \begin{equation}
\int e^{-ku}dV\leq C\exp(\xi k^{n+1}I_{\omega}^{(A)}(u))\label{eq:aubins general hyp with his norm}\end{equation}
for all $u\in\mathcal{H}(X,\omega)$ normalized such that $\int_{X}u\omega^{n}=0.$
To see that Theorem \ref{thm:aubin hyp intro} confirms this conjecture
(in the case when $[\omega]$ is an integral class) we recall that
there is a constant $C'$ such that \begin{equation}
\sup u\leq\frac{1}{V}\int u\omega^{n}+C'\label{eq:sup estimate for psh}\end{equation}
 and hence \ref{eq:aubins general hyp with his norm} applied to $u-\sup u$
gives \[
\log\int e^{-ku}dV\leq Ak^{n+1}\left(J_{\omega}(u)+\int(-u)\frac{\omega^{n}}{n!}\right)+(AC'k^{n+1}+B)\]
Thus \ref{eq:aubins general hyp with his norm} holds with $\xi=Ak^{n+1}$
and $C=C_{k}:=\exp(AC'k^{n+1}).$ This means that the constant $C_{k}$
depends on $k$ while Aubin's hypothesis, strictly speaking, says
that it should be independent of $k.$ Anyway, in applications to
existence problems for PDEs the precise value of $C_{k}$ is immaterial
(compare section \ref{sec:Existence-of-optimizers}).

\subsection{Counter-example to Aubin's explicit conjecture in the Fano case}

In his paper Aubin also conjectured that in the Fano setting (with
$[\omega]=c_{1}(-K_{X}))$ the infimum $\xi_{n}$ over all constants
$\xi$ satisfying \ref{eq:aubins general hyp with his norm} for some
$C_{\xi}$ is explicitly given by \begin{equation}
\xi_{n}=\pi^{-n}(n-1)!n^{n}(n+1)^{-(2n+1)}=\pi^{-n}(n-1)!\left(1+\frac{1}{n}\right)^{-n}(n+1)^{-(n+1)}\label{eq:def of aubins constant}\end{equation}
However, there is a simple counter-example to this hypothesis. To
see this we first recall a result of Ding (\cite{din}, Prop 6) which
in our notation may be formulated as follows: if one replaces $I_{\omega}$
in \ref{eq:aubins general hyp with his norm} by $(-\mathcal{E_{\omega}})$
then the corresponding optimal constant $\eta(X)$ satisfies (when
specializing to the case $k=1)$ \[
\eta(X)\geq1/V(X)\]
if $X$ admits non-trivial holomorphic vector fields, i.e. if $H^{0}(TX)\neq\{0\}.$
To get a contradiction it will hence be enough to exhibit a Fano manifold
$X_{n}$ with $H^{0}(TX_{n})\neq\{0\}$ such that \begin{equation}
V(X_{n})<\frac{1}{(n+1)}\frac{d_{n}}{\xi_{n}}\left(=\frac{1}{(n+1)!}\frac{1}{2^{n}}\left(1+\frac{1}{n}\right)^{n}(n+1)^{(n+1)}\right)\label{eq:ineq in counter}\end{equation}
 To this end we may simply set $X_{n}=(\P^{1})^{n}$ so that $c_{1}(X_{n})^{n}=2^{n}.$
Since $V(X_{n}):=c_{1}(X_{n})^{n}/n!$ the previous inequality indeed
holds for all sufficiently large $n.$ Indeed, when dividing out $n!$
the lhs in \ref{eq:ineq in counter} is equal to $2^{n}$ while the
rhs is of the order $(n/2)^{n}.$

\subsection{Comparison with Aubin's constant for the ball}

Let us now turn to the setting of the unit-ball, where $\omega=0$
and consider the corresponding functional $I_{0}^{(A)}$ (called $\mathcal{J}$
with the same normalizations in \cite{au2}), i.e. \[
I_{0}^{(A)}(u):=\frac{1}{(n-1)!}\int(-u)\left((i\partial\bar{\partial}u\right)^{n}\]
 In the case of a radial psh function $u$ in the ball Aubin showed
\cite{au2} that \begin{equation}
\log\int e^{-u}dV\leq a_{n}I_{0}^{(A)}(u)+C,\,\,\, a_{n}=2n^{n}(n+1)^{-(2n+1)}\sigma_{2n-1}^{-1},\label{eq:moser-trudinger in ball in aubins notation}\end{equation}
 where $\sigma_{p}$ denotes the volume of the unit $p-$sphere, giving
$a_{n}=\xi_{n}$ (formula \ref{eq:def of aubins constant}). But by
Theorem \ref{thm:quasi-sharp m-t in ball intro} the \emph{optimal}
constant $c_{n}$ in the equality \ref{eq:moser-trudinger in ball in aubins notation}
is equal to \[
c_{n}=\frac{1}{(2\pi)^{n}}\frac{(n-1)!}{(n+1)!}\frac{n!}{(n+1)^{n}}=\frac{(n-1)!}{(2\pi)^{n}}\frac{1}{(n+1)^{n+1}}\]
Hence, \[
c_{n}=\left(\frac{1}{2}(1+1/n)\right)^{n}a_{n},\]
so that $a_{n}\geq c_{n}$ with equality iff $n=1.$ Accordingly,
Aubin's constant $a_{n}$ is not optimal for $n>1.$

\subsection{Discussion}

It is natural to ask why Aubin expected that the particular value
in formula \ref{eq:def of aubins constant} gives the optimal constant
in the Fano case? We can only speculate on this. But it seems that
Aubin was expecting that the optimal constant in the Fano case coincides
with the optimal constant in the setting of the ball. In fact, in
section 3 in \cite{au} Aubin claims that he has proved that the optimal
constant in the setting of the ball is indeed given by formula \ref{eq:def of aubins constant}.
But as explained in the previous section this is\emph{ not} the optimal
constant in the ball unless $n=1$ (and moreover Aubin only proved
his inequality in the radial case). In particular, it is not the case
that the optimal constant in the Fano case coincides with the case
of the ball (by the counter-example above).

It may be illuminating to give an informal description of counter-examples
to Aubin's expectations (which has the virtue of avoiding comparing
various normalizations). As our arguments used in the proof of Theorem
\ref{thm:sharp m-t in ball under inv intro} show, the optimal constant
in the setting of the ball coincides with the optimal constant in
the Fano setting for $X=\P^{n}.$ But by Ding's result the optimal
constant $\xi(X)$ on any Fano manifold $X$ with $H^{0}(TX)\neq\{0\}$
satisfies \[
\xi(X)\geq C_{n}/V(X)\]
where $C_{n}$ is a universal constant (depending on the particular
normalizations of the energy functionals) with equality if $X$ moreover
admits a Kähler-Einstein metric (by \cite{d-t}). Hence, if the optimal
constant coincided with the one in the setting of the ball, then this
would force \[
V(X)\geq V(\P^{n})\]
 for any Fano $X$ such that $H^{0}(TX)\neq\{0\}.$ But this latter
inequality is clearly violated by $X=(\P^{1})^{n}$ and in fact by
many other $X.$ For example, according to two well-known conjectures
$\P^{n}$ is the unique maximizer of the volume functional among (1)
all Fano $n-$folds with Picard number equal to one (see \cite{h}
for a proof when $n\leq4)$ and (2) among all toric Kähler-Einstein
Fano manifolds (such as $(\P^{1})^{n}).$

\section{\label{sec:Existence-of-optimizers}Existence of extremals and applications
to Monge-Ampère equations}

\subsection{The Kähler setting}

Let $(X,\omega)$ be an integral Kähler manifold and fix a smooth
volume form $dV$ on $X.$ For a given sequence $a_{k}\in\R$ we consider
the following Moser-Trudinger type functional on $\mathcal{H}(X,\omega):$
\[
\mathcal{G}_{a_{k}}(u):=\frac{1}{k}\log\int e^{-ku}dV+\frac{1}{V}\int u\frac{\omega^{n}}{n!}-\frac{k^{n}}{a_{k}}J_{\omega}(u)\]
which is $\R-$invariant (and hence descends to a functional on space
of all Kähler metrics in $[\omega]).$ We let $a_{k}(X)$ be the infimum
over all $a_{k}$ such that the functional above is bounded from above.
By Theorem \ref{thm:aubin hyp intro} (and the discussion in the beginning
of section \ref{sec:Remarks-on-the}) $a_{k}(X)\geq1/A$ or more precisely
$\liminf_{k}a_{k}(X)/k^{n+1}\geq1.$ 

In this section we will be concerned with the question of existence
of maximizers for $\mathcal{G}_{a_{k}}$ and solutions to the corresponding
Euler-Lagrange equation \begin{equation}
0=(d\mathcal{G}_{a_{k}})_{|u}=-\frac{e^{-ku}dV}{\int e^{-ku}dV}+\frac{1}{V}\frac{\omega^{n}}{n!}+\frac{k^{n}}{a_{k}}(\frac{\omega_{u}}{n!}-\frac{\omega^{n}}{n!})\label{eq:variational eq in kahler}\end{equation}
Braking the \textbackslash{}$\R-$invariance by the introducing the
normalization $\int_{X}e^{-ku}dV=V$ the previous equation can hence
be written as the following PDE: \begin{equation}
\frac{\omega_{u}^{n}}{n!}=\frac{a_{k}}{k^{n}}e^{-ku}dV+(1-\frac{a_{k}}{Vk^{n}})\frac{\omega^{n}}{n!}\label{eq:m-a eq in kahler setting}\end{equation}
for $u\in\mathcal{H}(X,\omega).$
\begin{thm}
If $a_{k}<a_{k}(X)$ and $a_{k}<Vk^{n}$ then there is a solution
to \ref{eq:m-a eq in kahler setting} in $\mathcal{H}(X,\omega).$
Moreover, the solution can be taken to maximize the functional $\mathcal{G}_{a_{k}}.$
In particular, if $a_{k}=a<1$ then there is such a solution for all
$k$ sufficiently large. 
\end{thm}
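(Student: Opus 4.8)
The plan is to prove existence by the direct method in the calculus of variations, in the spirit of the variational approach to complex Monge--Amp\`ere equations of Berman--Boucksom--Guedj--Zeriahi \cite{bbgz}: produce a maximizer of $\mathcal{G}_{a_{k}}$ in the finite energy class $\mathcal{E}^{1}(X,\omega)$, show it solves the Euler--Lagrange equation \ref{eq:variational eq in kahler} weakly, and then upgrade it to a genuine K\"ahler potential by pluripotential-theoretic a priori estimates.

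\emph{Step 1: coercivity and compactness of a maximizing sequence.} Since $J_{\omega}\geq 0$ and $a_{k}<a_{k}(X)$, pick $a_{k}<a'<a_{k}(X)$ and write, for every $u$,
\[
\mathcal{G}_{a_{k}}(u)=\mathcal{G}_{a'}(u)-k^{n}\Bigl(\tfrac{1}{a_{k}}-\tfrac{1}{a'}\Bigr)J_{\omega}(u)\leq C_{a'}-c\,J_{\omega}(u),
\]
where $c:=k^{n}(\tfrac{1}{a_{k}}-\tfrac{1}{a'})>0$ and $C_{a'}:=\sup\mathcal{G}_{a'}<\infty$ by definition of $a_{k}(X)$. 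A maximizing sequence $u_{j}$ (normalized by $\sup_{X}u_{j}=0$, using $\R$-invariance) has $\mathcal{G}_{a_{k}}(u_{j})$ bounded below (it converges to the finite supremum, which dominates $\mathcal{G}_{a_{k}}(0)$), so $J_{\omega}(u_{j})\leq C$, and hence also $-\mathcal{E}_{\omega}(u_{j})\leq C$. By compactness of $\{u\in PSH(X,\omega):\sup_{X}u=0\}$ in $L^{1}(X)$, after passing to a subsequence $u_{j}\to u_{\infty}$ in $L^{1}$; by lower semicontinuity of $-\mathcal{E}_{\omega}$ one has $u_{\infty}\in\mathcal{E}^{1}(X,\omega)$ and $\sup_{X}u_{\infty}=0$.

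\emph{Step 2: semicontinuity of $\mathcal{G}_{a_{k}}$.} The term $u\mapsto\frac{1}{V}\int u\,\omega^{n}/n!$ is $L^{1}$-continuous, and $-\mathcal{E}_{\omega}$ is lower semicontinuous, so $-\tfrac{k^{n}}{a_{k}}J_{\omega}$ is upper semicontinuous along $u_{j}$. The crucial point is the \emph{continuity} of $u\mapsto\frac{1}{k}\log\int_{X}e^{-ku}\,dV$: by Theorem \ref{thm:aubin hyp intro} (equivalently the Sobolev/Trudinger estimates \ref{eq:sob intro}, \ref{eq:trudingers ineq on kahler}), on $\{-\mathcal{E}_{\omega}\leq C\}\cap\{\sup_{X}u=0\}$ the functions $e^{-ku}$ are bounded in $L^{p}(X,dV)$ for some $p>1$, hence equi-integrable; passing to an a.e.\ convergent subsequence, Vitali's theorem gives $\int e^{-ku_{j}}dV\to\int e^{-ku_{\infty}}dV$. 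Therefore $\mathcal{G}_{a_{k}}(u_{\infty})\geq\limsup_{j}\mathcal{G}_{a_{k}}(u_{j})=\sup_{\mathcal{H}(X,\omega)}\mathcal{G}_{a_{k}}$, and since $\mathcal{H}(X,\omega)$ is dense in $\mathcal{E}^{1}(X,\omega)$ with $\mathcal{G}_{a_{k}}$ finite on it, $u_{\infty}$ maximizes $\mathcal{G}_{a_{k}}$ over $\mathcal{E}^{1}(X,\omega)$.

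\emph{Step 3: Euler--Lagrange equation, regularity, and the special case.} Differentiating $\mathcal{G}_{a_{k}}$ along segments $u_{\infty}+t(v-u_{\infty})$ (using the first-variation formula for $\mathcal{E}_{\omega}$, as in Lemma \ref{lem:prop of energy kahler}, and the obvious one for $\mathcal{L}$) shows $u_{\infty}$ solves \ref{eq:variational eq in kahler} in the pluripotential sense; after the normalization $\int e^{-ku_{\infty}}dV=V$ this is exactly \ref{eq:m-a eq in kahler setting}. Here the hypothesis $a_{k}<Vk^{n}$ is used: it makes $1-a_{k}/(Vk^{n})>0$, so the right-hand side is a sum of two positive measures of total mass $V$, with density in $L^{p}(X,dV)$ for some $p>1$. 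Kolodziej's $L^{\infty}$-estimate then gives $u_{\infty}\in L^{\infty}(X)$; the density is now bounded above and bounded below away from $0$, so by the Aubin--Yau existence/uniqueness theory together with the Evans--Krylov and Schauder bootstrap one gets $u_{\infty}\in\mathcal{C}^{\infty}$, i.e.\ $u_{\infty}\in\mathcal{H}(X,\omega)$. Finally, if $a_{k}=a<1$ for all $k$, then $a_{k}(X)\to\infty$ (indeed $\liminf_{k}a_{k}(X)/k^{n+1}\geq 1$ by Theorem \ref{thm:aubin hyp intro}), so $a<a_{k}(X)$ for $k$ large, and $Vk^{n}\to\infty>a$ since $V>0$; both hypotheses hold.

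\emph{Main obstacle.} The genuinely delicate part is Step 3, namely passing from a finite-energy maximizer to a smooth K\"ahler potential, which requires the full pluripotential a priori estimates for the Monge--Amp\`ere equation (and is exactly why $a_{k}<Vk^{n}$ is imposed, so that the measure on the right is positive with $L^{p}$ density). The place where the newly proved Moser--Trudinger inequality is indispensable is the equi-integrability in Step 2, without which the direct method would not close.
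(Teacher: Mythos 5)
Your Steps 1 and 2 (coercivity, compactness of a normalized maximizing sequence, and upper semicontinuity of $\mathcal{G}_{a_k}$ via uniform $L^p$-integrability of $e^{-ku}$ coming from Theorem \ref{thm:aubin hyp intro}) match the paper's argument and are fine.

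The gap is in Step 3, in passing from a maximizer $u_\infty\in\mathcal{E}^1(X,\omega)$ to the Euler--Lagrange equation. Differentiating along affine segments $u_\infty+t(v-u_\infty)$ with $v\in\mathcal{E}^1(X,\omega)$ and $t\in[0,1]$ only gives the \emph{one-sided} variational inequality
\[
\int_X (v-u_\infty)\Bigl(-\tfrac{e^{-ku_\infty}dV}{\int e^{-ku_\infty}dV}+\tfrac{1}{V}\tfrac{\omega^n}{n!}+\tfrac{k^n}{a_k}\bigl(\tfrac{\omega_{u_\infty}^n}{n!}-\tfrac{\omega^n}{n!}\bigr)\Bigr)\leq 0
\quad\text{for all }v\in\mathcal{E}^1(X,\omega),
\]
because $\mathcal{E}^1(X,\omega)$ is convex and you cannot perturb in both directions by an arbitrary smooth test function (in general $u_\infty\pm\epsilon\phi$ leaves $PSH(X,\omega)$). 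A one-sided inequality against a convex cone of test functions does not force the signed measure to vanish, so this does not prove \ref{eq:variational eq in kahler}. The paper closes this gap by the projection argument of \cite{bbgz}: one fixes arbitrary $v\in\mathcal{C}^\infty(X)$, considers
\[
f(t):=\mathcal{E}_\omega\bigl(P_\omega(u_*+tv)\bigr)+\mathcal{I}_{a_k}(u_*+tv),
\]
and uses that (i) $\mathcal{E}_\omega\circ P_\omega$ is Gateaux differentiable with differential $MA(P_\omega u)$ and (ii) $\mathcal{I}_{a_k}$ is \emph{monotone decreasing} in $u$ precisely because $a_k< Vk^n$. Point (ii) is what lets you compare $f(t)$ with $\mathcal{G}_{a_k}(P_\omega(u_*+tv))\leq\mathcal{G}_{a_k}(u_*)=f(0)$ and then differentiate at an interior maximum of $f$, giving the genuine two-sided Euler--Lagrange condition. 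So the hypothesis $a_k<Vk^n$ is doing double duty: in your write-up it only appears as positivity of the right-hand side measure, but the monotonicity of $\mathcal{I}_{a_k}$ is the second, essential role, and it is exactly what your affine-segment differentiation cannot see.

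A minor remark on regularity: once $u_\infty\in L^\infty$ (Kolodziej), the equation is $\omega_{u_\infty}^n/n!=F(u_\infty)\,dV$ with $F$ smooth and strictly positive (again using $a_k<Vk^n$), and the appropriate reference for smoothness of such bounded weak solutions is \cite{sz-to}, rather than a generic appeal to Aubin--Yau together with Evans--Krylov; the latter presuppose more a priori regularity than you have at that stage.
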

Given the Moser-Trudinger inequalities in Theorem \ref{thm:aubin hyp intro}
the proof of the previous theorem follows from the variational approach
to complex Monge-Ampère equation introduced in \cite{bbgz}. 

\emph{Existence of a maximizer $u_{*}$ in $\mathcal{E}^{1}(X,\omega)$ }

We proceed in two steps. The first step amounts to the following\emph{
coercivity estimate:} there exists $\delta,C>0$ such that \begin{equation}
\mathcal{G}_{a_{k}}\leq\delta\mathcal{E}_{\omega}+C\label{eq:coerc estimate kahler}\end{equation}
on the space $\mathcal{E}_{0}^{1}(X,\omega):=\mathcal{E}^{1}(X,\omega)\cap\{\sup_{X}=0\}$
(which we equip with the $L^{1}-$topology). This follows directly
from the assumption that $a<a(X)$ and the inequality \ref{eq:sup estimate for psh}.
The second step is to establish the following \emph{semi-continuity}
\emph{property: }for any constant $C$ the functional $\mathcal{G}_{a_{k}}$
is upper semi-continuous (usc) on $\{-\mathcal{E}_{\omega}\leq C\}$
in $\mathcal{E}_{0}^{1}(X,\omega)$ (wrt the $L^{1}-$topology). To
this end first recall that $\mathcal{E}_{\omega}$ is usc on $PSH(X,\omega)$
(in particular it follows from weak compactness that $\{-\mathcal{E}_{\omega}\leq C\}$
is compact) \cite{bbgz,begz}. All that remains is then to prove that
$u\mapsto\int e^{-ku}dV$ is usc on $\{-\mathcal{E}_{\omega}\leq C\}.$
To this end it is enough to established a uniform bound \begin{equation}
\int e^{-(k+\delta)u}\leq C_{\delta}\label{eq:upper bound on exponential}\end{equation}
 for some $\delta>0$ (compare the proof of Lemma 6.4 in \cite{bbgz}
or Lemma 3.6 in \cite{berm2}). But since we have assumed that $u$$\in\{-\mathcal{E}_{\omega}\leq C\}$
this is an immediate consequence of the Moser-Trudinger inequality
in Theorem \ref{thm:aubin hyp intro} (which shows that any $\delta>0$
will do). The existence of a maximizer $u_{*}$ is now rather immediate:
take $u_{j}$ in $\mathcal{E}_{0}^{1}(X,\omega)$ such that \[
\mathcal{G}_{a_{k}}(u_{j})\rightarrow\sup_{\mathcal{E}^{1}(X,\omega)}\mathcal{G}_{a_{k}},\]
(note that, by the scale invariance of $\mathcal{G}_{a_{k}}$ we may
indeed assume that $\sup_{X}u_{j}=0).$ By the coercivity estimate
the sup is finite and moreover $(u_{j})\subset\{-\mathcal{E}_{\omega}\leq C\}$
for some $C>0.$ But then it follows from the upper semi-continuity
that the sup is attained on any accumulation point $u_{*}$ of $(u_{j})$
(which exists by compactness). This concludes the proof of the existence
of a maximizer.

\emph{The maximizer $u_{*}$ is a weak solution of equation \ref{eq:m-a eq in kahler setting}.}

We will use the projection argument in \cite{begz} to see that $u_{*}$
is a (weak) solution in $\mathcal{E}_{0}^{1}(X,\omega)$ to the variational
equation \ref{eq:variational eq in kahler} (shifting $u_{*}$ by
a constant hence gives a solution to the equation \ref{eq:m-a eq in kahler setting}).
To this end we first decompose \[
\mathcal{G}_{a_{k}}(u)=\frac{k^{n+1}}{a_{k}}\mathcal{E}_{\omega}+\mathcal{I}_{a_{k}},\,\,\,\,\,(\mathcal{I}_{a_{k}}(u)=\log\int e^{-ku}dV+k(1-\frac{k^{n}}{Va_{k}})\int u\omega^{n}/n!)\]
Fixing $v\in\mathcal{C}^{\infty}(X)$ let $f(t):=\mathcal{E}_{\omega}(P_{\omega}(u_{*}+tv)+\mathcal{I}_{a_{k}}(u_{*}+tv),$
where 

\[
P_{\omega}(u)(x):=\sup\{v(x):\, v\leq u,\,\, v\in PSH(X,\omega)\}\]
By the assumption $a_{k}<k^{n}V$ the functional $\mathcal{I}_{a_{k}}(u)$
is decreasing in $u$ and hence the sup of $f(t)$ on $\R$ is attained
for $t=0.$ Now $\mathcal{E}_{\omega}\circ P_{\omega}$ is differentiable
with differential $MA(P_{\omega}u)$ at $u$ \cite{bbgz}. Hence,
the condition $df/dt=0$ for $t=0$ gives that the variational equation
\ref{eq:variational eq in kahler} holds when integrated against any
$v\in\mathcal{C}^{\infty}(X).$ 

\emph{Regularity}

Now, by the previous estimate \ref{eq:upper bound on exponential}
$\omega_{u_{*}}^{n}$ has a density in $L^{p}$ for some $p>1$ (or
even all $p>1)$ and hence it follows from Kolodziejs $L^{\infty}-$estimate
\cite{ko} that $u_{*}$ is in $L^{\infty}(X)$ (and is even continuous).
Finally the higher order regularity $u\in\mathcal{C}^{\infty}(X)$
then follows from \cite{sz-to}, using that the rhs in equation \ref{eq:m-a eq in kahler setting}
is of the form $F(u)$ for $F(t)$ smooth and positive (using the
assumption $a_{k}<k^{n}V).$

\subsection{Remarks on the Fano setting }

Let now $X$ be Fano with $[\omega]=c_{1}(-K_{X}).$ In the case when
$k=1$ and $a_{k}:=V$ the functional $\mathcal{G}_{a_{k}}$ above
becomes \[
\mathcal{G}_{a_{k}}:=\mathcal{G}_{V}(u):=\log\int e^{-u}dV+\frac{1}{V}\mathcal{E}_{\omega}(u)\]
with Euler-Lagrange equation \[
\omega_{u}^{n}/n!=Ve^{-u}dV\]
In particular, if $dV$ is taken as $e^{-h_{\omega}}\omega^{n}/n!$
where $h_{\omega}$ is the Ricci potential of $\omega$ then the previous
equation may be written as the Kähler-Einstein equation \[
(dd^{c}\phi)^{n}/n!=Ve^{-\phi}dz\wedge d\bar{z}\]
for the local weight $\phi$ of the metric $\omega,$ saying that
$\mbox{Ric \ensuremath{\omega=\omega}.}$ In this setting it is well-known
that the corresponding coercivity estimate \ref{eq:coerc estimate kahler}
is \emph{equivalent }to the existence of a Kähler-Einstein metric,
which in turn is equivalent to $X$ being {}``analytically $K-$stable''
in the sense of Tian (which means that Mabuchi's K-energy functional
is proper); see \cite{ti-1} Thm 7.13 and \cite{p-s+}.

Now, the coercivity estimate holds for $\mathcal{G}_{V}$ precisely
when a Moser-Trudinger inequality holds for some $a_{k}:=a$ (i.e.
$\mathcal{G}_{a}\leq C)$ satisfying \begin{equation}
V<a\label{eq:existence crituerium in terms of v}\end{equation}
In other words, if $a$ could be chosen uniformly over all Fano manifolds
$X$ of dimension $n$ then the previous inequality would give an
existence criterion for Kähler-Einstein metrics on .$X,$ in terms
of the volume of $X.$ This follows for example from the variational
approach above, but a proof using the continuity method already appears
in Aubin's paper \cite{au} (see also \cite{din} where the functional
$\mathcal{G}_{V}$ seems to first have appeared explicitly). As explained
in section \ref{sec:Remarks-on-the} Aubin also proposed an explicit
value for $a,$ which however cannot be correct. 

Unfortunately, it can be shown that the uniform constant provided
by Theorem \ref{thm:uniform fano intro} (at least in its present
form) is not useful for this kind of application. On the other hand
the existence of Moser-Trudinger type inequalities established in
Theorems \ref{thm:aubin hyp intro} and \ref{thm:uniform fano intro}
are very useful in other regards, for example for establishing semi-continuity
properties and uniform estimates as in the previous section. In particular,
it plays an important role in \cite{bbegz} in the construction of
Kähler-Einstein metrics on {}``analytically K-stable'' log-Fano
varieties.

Before turning to the setting of domains in $\C^{n}$ we briefly recall
Tian's \cite{ti1} existence criterion for Kähler-Einstein metrics
which has proved to be very useful: \begin{equation}
\alpha(X)>n/(n+1),\label{eq:tian's cond}\end{equation}
 where \[
\alpha(X):=\sup\left\{ t:\exists C_{t}:\,\int_{X}e^{-t(u-\sup_{X})}dV\leq C_{t},\,\forall u\in PSH(X,\omega)\right\} \]
 As is well-known it is enough to consider $u$ with analytic singularities
in the sup above (and hence $\alpha(X)$ coincides with the algebraically
defined log canonical threshold $\mbox{lc\ensuremath{(X))}.}$ Now,
if it would be enough to take the sup above over all $u$ with\emph{
isolated} singularities, then it would follow from the inequality
\ref{eq:local alg ineq} (see also below) that \[
\alpha(X)>n/(n!V)^{1/n}\]
 and hence Tian's criterion \ref{eq:tian's cond} would be satisfied
if $n!V<(n+1)^{n}.$ However, this latter condition is satisfied for
\emph{any} Fano manifold when $n=2$ (i.e. Del Pezzo surfaces) and
in particular for those which do not admit a Kähler-Einstein metric
(like $\P^{2}$ blown-up in one point) Still, as we will see next
a similar approach turns out to be very fruitful in the setting of
domains. At least on a heuristic level this could perhaps be expected
as all analytic singularities are indeed isolated in this setting.

\subsection{\label{sub:mf eq domain}The setting of domains in $\C^{n}$ and
Mean Field Equations}

Let now $\Omega$ be a hyperconvex domain in $\C^{n}$ with $dV$
the Euclidean volume form and recall (see section \ref{sub:Quasi-B-M-in})
that \[
\mathcal{G}_{\gamma}(u):=\frac{1}{\gamma}\log\int_{\Omega}e^{-\gamma u}dV+\frac{1}{n+1}\int(-u)(dd^{c}u)^{n}\]
so that the corresponding Euler-Lagrange equation reads \begin{equation}
(dd^{c}u)^{n}=\frac{e^{-\gamma u}dV}{\int_{\Omega}e^{-\gamma u}dV}\label{eq:m-a eq in domain}\end{equation}
with the boundary condition $u=0.$ Equivalently setting $v=\gamma u$
gives the Euler-Lagrange equation corresponding to the non-scaled
Moser-Trudinger inequality $(M-T)$ in the beginning of section \ref{sec:Moser-Trudinger-and-Demailly}
(it is obtained by setting $\gamma=1$ and inserting a multiplicative
constant $a=\gamma^{n}$ in the rhs). Ideally, we would like to look
for smooth solutions (in $\mathcal{H}_{0}(\Omega)$) to the previous
equation, but as the corresponding higher order regularity theory
does not seem to be sufficiently developed we will merely be able
to produce continuous solutions (vanishing on the boundary). Note
that in this setting there is no invariance under additive scalings
of $u$ (due to the boundary conditions $u=0$). 

In the case when $n=1$ the previous equation is often referred to
as the \emph{mean field equation} as it appears in a statistical model
of mean field type, with $\gamma$ playing the role of (minus) the
temperature \cite{clmp,k}. In the one-dimensional case it is well-known
that $\gamma=2$ appears as a critical value/phase transition (the
value is $8\pi$ when $dd^{c}$ is replaced by the usual non-normalized
Laplacian in the plane). It should be emphasized that the statistical
mechanical point of view only the solutions maximizing the corresponding
free energy functional are relevant. 
\begin{thm}
\label{thm:existence of solutions in domain}Let $\Omega$ be a hyperconvex
domain and assume that $\gamma<n+1.$ Then there exists $u_{\gamma}\in\mathcal{C}^{0}(\bar{\Omega})$
solving equation \ref{eq:m-a eq in domain} in $\Omega$ with $u_{\gamma}=0$
on $\partial\Omega$ and which maximizes the corresponding functional
$\mathcal{G}_{\gamma}.$\end{thm}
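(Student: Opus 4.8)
The plan is to adapt the variational argument used in the K\"ahler setting (the proof following Theorem~6.1) to the present setting of a hyperconvex domain, using the Moser-Trudinger inequality now available on $\Omega$ (Theorem~\ref{thm:quasi-sharp m-t in ball intro}, via the subextension reduction to the ball, together with Prop.~\ref{pro:Demailly imples m-t}). Concretely, I would work with the functional $\mathcal{G}_\gamma$ on the finite-energy class $\mathcal{E}^1(\Omega)$ and produce a maximizer, then show the maximizer solves the Euler-Lagrange equation \ref{eq:m-a eq in domain}, and finally upgrade its regularity to $\mathcal{C}^0(\bar\Omega)$.

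\textbf{Step 1: coercivity.} Since $\gamma<n+1$, the quasi-sharp M-T inequality gives $\delta>0$ with
$\mathcal{G}_\gamma(u)\le -\delta(-\mathcal{E}(u))+C$ on $\mathcal{E}^1(\Omega)$; here I use that $\tfrac1\gamma\log\int e^{-\gamma u}dV\le -\tfrac{n!}{(n+1-\delta')^n}\cdot\tfrac1\gamma\,(n+1-\delta')\,\mathcal{E}(u)+C$ for $\delta'$ small, which beats the $+\tfrac{1}{n+1}(-\mathcal{E}(u))$ term precisely when $\gamma<n+1$. (Here I am using the $n!$-normalization of $\mathcal{E}$ so the sharp exponent is $n+1$.) In particular $\sup_{\mathcal{E}^1(\Omega)}\mathcal{G}_\gamma<\infty$, and any maximizing sequence $u_j$ stays in a set $\{-\mathcal{E}\le C\}$.

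\textbf{Step 2: compactness and upper semicontinuity.} The set $\{u\in\mathcal{E}^1(\Omega):-\mathcal{E}(u)\le C,\ u\le 0\}$ is compact in $L^1(\Omega)$ and $\mathcal{E}$ is upper semicontinuous there (this is the content of the finite-energy theory of \cite{ce0,ce}, the domain analogue of \cite{bbgz,begz}). It remains to see that $u\mapsto\int_\Omega e^{-\gamma u}dV$ is continuous, equivalently that $\int e^{-(\gamma+\varepsilon)u}\le C_\varepsilon$ uniformly on $\{-\mathcal{E}\le C\}$ for some $\varepsilon>0$; but since $\gamma<n+1$ this is again immediate from Theorem~\ref{thm:quasi-sharp m-t in ball intro} (any small $\varepsilon$ works). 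Hence $\mathcal{G}_\gamma$ is usc on the relevant compact set and attains its supremum at some $u_*\in\mathcal{E}^1(\Omega)$, which we may take with $u_*\le 0$.

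\textbf{Step 3: Euler-Lagrange equation.} Here I would invoke the variational machinery of \cite{bbgz,a-c-c}: writing $\mathcal{G}_\gamma=\tfrac{1}{n+1}\mathcal{E}+\mathcal{I}_\gamma$ with $\mathcal{I}_\gamma(u)=\tfrac1\gamma\log\int e^{-\gamma u}dV$, one fixes $v\in\mathcal{C}^\infty_c(\Omega)$ (or smooth up to the boundary vanishing there) and differentiates $t\mapsto \tfrac{1}{n+1}\mathcal{E}(P(u_*+tv))+\mathcal{I}_\gamma(u_*+tv)$, where $P$ is the projection onto $\mathcal{E}^1$-functions with zero boundary values majorized by the given function. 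Using that $\mathcal{E}\circ P$ is Gateaux differentiable with differential $MA(Pu)$ (the domain analogue of the differentiability result in \cite{bbgz}, proved in \cite{a-c-c}) and that $t=0$ is a critical point, one gets that $u_*$ satisfies \ref{eq:m-a eq in domain} weakly, i.e. $(dd^c u_*)^n=e^{-\gamma u_*}dV/\int e^{-\gamma u_*}dV$ as Bedford-Taylor measures. The normalization is automatic from integrating against test functions and the constraint $\int(dd^cu)^n=\int MA(u_*)$; one also checks $u_*\not\equiv 0$ and $u_*<0$ in $\Omega$ by the maximum principle.

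\textbf{Step 4: regularity.} From Step~2, $(dd^c u_*)^n$ has $L^p$ density for some $p>1$ (indeed all $p>1$) since $e^{-\gamma u_*}\in L^{1+\varepsilon}$. The domain version of Ko\l odziej's $L^\infty$-estimate together with the comparison of $u_*$ with the solution of the Dirichlet problem $(dd^cw)^n=e^{-\gamma u_*}dV/\int e^{-\gamma u_*}dV$, $w|_{\partial\Omega}=0$ — which is continuous on $\bar\Omega$ by \cite{ko} / \cite{ce0} — forces $u_*\in L^\infty$ and in fact $u_*\in\mathcal{C}^0(\bar\Omega)$, and that $u_*\to 0$ at $\partial\Omega$ (hyperconvexity gives the barrier). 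That $u_*$ vanishes continuously on $\partial\Omega$ also identifies it as an honest element of (the closure of) $\mathcal{H}_0(\Omega)$, and it maximizes $\mathcal{G}_\gamma$ by construction.

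\textbf{Main obstacle.} The delicate point is Step~3: making the variational derivative rigorous in the finite-energy class of a domain (differentiability of $\mathcal{E}\circ P$, handling the boundary, ensuring the projection lands in the right space) and extracting from it the PDE with the correct normalizing constant. This is where one must lean most heavily on the domain finite-energy theory of \cite{ce0,ce,a-c-c} rather than on elementary arguments; everything else is a fairly mechanical transcription of the K\"ahler proof, with the quasi-sharp M-T inequality of Theorem~\ref{thm:quasi-sharp m-t in ball intro} supplying exactly the coercivity and integrability needed because $\gamma<n+1$.
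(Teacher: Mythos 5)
Your proposal follows essentially the same route as the paper's own proof: coercivity of $\mathcal{G}_\gamma$ from the quasi-sharp Moser-Trudinger inequality (Theorem \ref{thm:quasi-sharp m-t in ball intro}) since $\gamma<n+1$, upper semicontinuity of $\mathcal{E}$ and compactness of its sub-level sets in $\mathcal{E}^1(\Omega)$ to extract a maximizer, the projection argument of \cite{bbgz} as adapted to hyperconvex domains in \cite{a-c-c} to derive the Euler-Lagrange equation \ref{eq:m-a eq in domain}, and $L^p$ integrability of the density together with \cite{ko} for $\mathcal{C}^0$ regularity. The only detail the paper adds that you omit is that for a general hyperconvex (as opposed to strictly pseudoconvex) domain the continuity up to $\bar\Omega$ is drawn from \cite{bl}, with \cite{c-p} offering an alternative $L^2$-stability route in the strictly pseudoconvex case; this is a minor citation point, not a gap in the argument.
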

\begin{proof}
Assume that $\gamma<n+1.$ By Theorem \ref{thm:quasi-sharp m-t in ball intro}
the coercivity estimate corresponding to \ref{eq:coerc estimate kahler}
still holds for $\mathcal{G}_{\gamma}$ and it is well-known that
$\mathcal{E}$ is usc and its sub-level sets $\{\mathcal{E\geq}-C\}$
are compact (wrt the $L_{loc}^{1}-$topology); see \cite{a-c-c} and
references therein. Hence, all the previous arguments still apply
in the present setting of domains to give the existence of a maximizer
$u_{\gamma}$ for $\mathcal{G}_{\gamma}$ on the space $\mathcal{E}^{1}(\Omega).$
To see that $u_{\gamma}$ satisfies the equation \ref{eq:m-a eq in domain}
one applies a projection argument as in the Kähler setting above (see
\cite{a-c-c} where the projection argument from \cite{bbgz} was
adapted to the setting of hyperconvex domains). Finally, by the M-T
inequality $MA(u)$ has an $L^{p}-$density for $p>1$ and hence when
$\Omega$ is strictly pseudoconvex the continuity statement follows
from \cite{ko}, or alternatively from \cite{c-p} by taking $p=2$
(using the uniqueness of solution to the inhomogeneous Monge-Ampère
equation in the class $\mathcal{E}_{1}(\Omega)).$ As for the general
hyperconvex case it follows from \cite{bl}.
\end{proof}
Next we will establish a {}``concentration/compactness principle''
for the behavior of the solutions above when $\gamma$ approaches
the critical value $n+1.$ First recall that if $u$ psh in a neighborhood
of a point $z_{0}$ then its \emph{complex singularity exponent $c_{z_{0}}(u)$}
at $z_{0}$ is defined as \[
c_{z_{0}}(u):=\sup\left\{ t:\,\int_{U}e^{-tu}dV<\infty\right\} ,\]
 for $U$ some neighborhood of $z_{0}.$ As shown in \cite{ce} (Thm
5.5) the Brezis-Merle type inequality proved there may be localized
to give that \[
c_{z_{0}}(u)\geq n/(\int_{\{z_{0}\}}(dd^{c}u)^{n})^{1/n}\]
for any point $z_{0}\in\Omega$ and function $u\in\mathcal{F}(\Omega)$
(more generally the boundary assumptions on $u$ are not needed).
This can be seen as a generalization of the local algebra inequality
\ref{eq:local alg ineq} which corresponds to the case when $u=\log(\sum_{i=1}^{m}|f_{i}|^{2})$
for holomorphic functions $f_{i}$ (determining the ideal $\mathcal{I}:=(f_{1},...,f_{m})\subset\mathcal{O}_{z_{0}}(\C^{n})).$ 
\begin{thm}
\label{thm:conc-comp}Let $\gamma_{j}$ be a sequence increasing to
$n+1$ and $u_{j}:=u_{\gamma_{j}}$ a sequence of solutions of equation
\ref{eq:m-a eq in domain} as in the previous theorem converging to
$u\in\mathcal{F}(\Omega))$ in the $L_{loc}^{1}-$topology (which
is always possible to find after passing to a subsequence), then precisely
one of the following two alternatives hold:
\begin{enumerate}
\item $u_{j}$ converges uniformly to a solution $u$ of equation \ref{eq:m-a eq in domain}
for $\gamma=n+1,$ maximizing the functional $\mathcal{G}_{n+1}.$
\item For any $\delta>0$ the sequence $\int_{\Omega}e^{-(n+\delta)u_{j}}dV$
is unbounded. 
\end{enumerate}
In the second case above either the sequence $u_{j}$ has a blow-up
point in $\partial\Omega,$ i.e there is a sequence of point $z_{j}$
in $\Omega$ converging to $z_{0}\in\partial\Omega$ such that $\lim_{j}u(z_{j})=-\infty$
or the limit $u$ satisfies \begin{equation}
(dd^{c}u)^{n}=\delta_{z_{0}}\label{eq:ma is dirac in thm conc-comp}\end{equation}
 for some point $z_{0}\in\Omega$ and moreover $c_{z_{0}}(u)=n$ \end{thm}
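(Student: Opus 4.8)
The plan is to run a standard concentration/compactness dichotomy driven by the exponential integrals. First I would set $M_j := \int_\Omega e^{-\gamma_j u_j}\,dV$; by construction $u_j$ maximizes $\mathcal{G}_{\gamma_j}$, so one gets a uniform upper bound $\mathcal{G}_{\gamma_j}(u_j) \leq \sup \mathcal{G}_{\gamma_j}$, and by the quasi-sharp Moser--Trudinger inequality (Theorem \ref{thm:quasi-sharp m-t in ball intro}) these suprema are uniformly bounded as $\gamma_j \uparrow n+1$ (the bad term is only $-(n-1)\log\delta$ with $\delta \sim (n+1)-\gamma_j$, which is harmless for control of $\mathcal{E}$). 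Hence $-\mathcal{E}(u_j)$ is uniformly bounded, so $(u_j)$ stays in a compact subset $\{\mathcal{E} \geq -C\}$ of $\mathcal{F}(\Omega)$ and, passing to a subsequence, $u_j \to u \in \mathcal{F}(\Omega)$ in $L^1_{loc}$. The dichotomy is then: \emph{either} $\sup_j \int_\Omega e^{-(n+\delta)u_j}\,dV < \infty$ for some $\delta>0$, \emph{or} not. In the first case, the uniform exponential integrability upgrades $L^1_{loc}$-convergence to the convergence needed to pass to the limit in equation \ref{eq:m-a eq in domain}: the right-hand sides $e^{-\gamma_j u_j}/M_j$ converge (after extracting) in $L^1$ to $e^{-(n+1)u}/\int_\Omega e^{-(n+1)u}\,dV$, the $L^p$-density bound ($p>1$) coming from the M-T inequality gives a uniform modulus of continuity via Ko{\l}odziej's estimate, so $u_j \to u$ uniformly on $\bar\Omega$, $u=0$ on $\partial\Omega$, and $u$ solves the limit equation; that $u$ maximizes $\mathcal{G}_{n+1}$ follows from upper semicontinuity of $\mathcal{G}_{n+1}$ on $\{\mathcal{E}\geq -C\}$, exactly as in the proof of Theorem \ref{thm:existence of solutions in domain}. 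This is alternative (1).

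Now suppose alternative (1) fails, i.e. $\int_\Omega e^{-(n+\delta)u_j}\,dV$ is unbounded for every $\delta>0$; this is alternative (2), and the remaining work is to analyze the blow-up. Let $\mu_j := (dd^c u_j)^n = e^{-\gamma_j u_j}\,dV/M_j$, a sequence of probability measures on $\bar\Omega$; after extracting, $\mu_j \to \mu$ weakly, and by continuity of the Monge--Amp\`ere operator along the decreasing-to-$u$ structure (and the singular second-derivative-of-energy formula from \cite{ce}) one has $\mu = (dd^c u)^n$. I claim $\mu$ is a single Dirac mass $\delta_{z_0}$, unless the mass escapes to the boundary. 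The key local input is the localized Brezis--Merle inequality from \cite{ce} (Thm 5.5) in the form $c_z(v) \geq n/(\int_{\{z\}}(dd^c v)^n)^{1/n}$: if at some interior point $z$ the limit measure $\mu$ carries mass $\mu(\{z\}) = m < n^n$, then a neighborhood of $z$ has $\int e^{-t u}\,dV < \infty$ for $t$ slightly above $n$, and a standard localization argument (covering the interior away from the atoms of mass $\geq n^n$ by finitely many such balls, plus the unit-mass normalization $\mu(\bar\Omega)=1$) would force $\int_\Omega e^{-(n+\delta) u_j}\,dV$ to stay bounded for small $\delta$ — contradicting alternative (2). Hence either mass is lost to $\partial\Omega$ (which one reads off as a genuine blow-up sequence $z_j \to z_0 \in \partial\Omega$ with $u_j(z_j) \to -\infty$, using that $u_j=0$ on the boundary together with a Harnack/sub-mean-value argument), or all the mass $1$ concentrates at interior points, and since any interior atom has mass $\geq n^n \geq 1$ (for $n\geq 1$; here $n>1$), there is exactly one such point $z_0$, with $\mu(\{z_0\}) = 1$... wait — the theorem states $(dd^c u)^n = \delta_{z_0}$, i.e. total mass $1$, consistent with $\mu_j$ being probability measures, so the atom has mass $1$; the inequality $c_{z_0}(u)\geq n/1^{1/n} = n$ gives $c_{z_0}(u) \geq n$, and the reverse $c_{z_0}(u) \leq n$ follows from comparison with $n\log|z-z_0|^2$ near $z_0$ (which has mass $n^n$... ) — more carefully, from the fact that a psh function whose Monge--Amp\`ere measure has a unit atom at $z_0$ is bounded above by $\log|z-z_0|^2 + O(1)$ near $z_0$ by the comparison principle, giving $c_{z_0}(u) \leq n$, hence equality $c_{z_0}(u) = n$.

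The main obstacle I expect is the \emph{exclusion of multiple interior concentration points and the correct accounting of boundary mass loss}: one must show that the ``no uniform exponential integrability'' hypothesis is genuinely equivalent to concentration of \emph{all} the mass at a single point (or at the boundary), which requires the quantitative localized Brezis--Merle estimate to be applied simultaneously on a finite cover and combined with the total-mass constraint $\mu_j(\bar\Omega) = 1$ in a way that is robust under weak limits; the delicate point is that the Bedford--Taylor products do not converge weakly under mere $L^1_{loc}$ convergence without the energy bound, so one leans essentially on the compactness of $\{\mathcal{E} \geq -C\}$ in $\mathcal{F}(\Omega)$ and the continuity results of \cite{ce}. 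A secondary technical point is making the ``blow-up point in $\partial\Omega$'' alternative precise — translating ``mass $\mu_j$ escapes near $\partial\Omega$'' into the pointwise statement $u_j(z_j) \to -\infty$ for some $z_j \to z_0 \in \partial\Omega$ — which I would handle via the sub-mean-value inequality for $u_j$ over small balls together with $\int_{B(z_j,r_j)} \mu_j \not\to 0$.
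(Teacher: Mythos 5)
The core difficulty with your argument is at the very start: you claim that the quasi-sharp Moser--Trudinger inequality together with the maximizing property gives an \emph{unconditional} uniform bound on $-\mathcal{E}(u_j)$. This is false. The quasi-sharp inequality gives $\sup\mathcal{G}_\gamma\leq C-(n-1)\log\bigl((n+1)-\gamma\bigr)$, and as $\gamma_j\uparrow n+1$ this upper bound diverges to $+\infty$; it does not control $\mathcal{E}(u_j)$ at all. Worse, the unconditional energy bound simply does not hold: for the ball the radial solutions are $u_{\gamma}=\phi_0^{\epsilon(\gamma)}$ of section \ref{sec:M-T ineq in ball under invariance-1}, which converge to $(n+1)\log|z|^2$ of \emph{infinite} energy as $\gamma\to n+1$, so $\mathcal{E}(u_{\gamma_j})\to-\infty$. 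That is precisely the situation in which alternative (2) occurs, and it is exactly what your argument rules out at the outset. The paper's proof instead derives the coercivity estimate $\mathcal{E}(u_j)\geq -C$ only after \emph{assuming} that alternative (2) fails: the resulting bound $\int e^{-(n+\delta)u_j}\,dV\leq M$ feeds into the thermodynamical/Legendre-duality mechanism of Lemma \ref{lem:bounded using alpha}, yielding $F_\gamma(\mu_j)\leq \epsilon\,\langle u_j,\mu_j\rangle + C'$ for $\gamma$ up to $(n+\delta)(n+1)/n>n+1$ with $\epsilon>0$ uniformly; combined with the elementary lower bound $\mathcal{G}_{\gamma_j}(u_j)\geq\mathcal{G}_{\gamma_1}(u_1)=:-C$ (valid because $\gamma\mapsto\mathcal{G}_\gamma$ is nondecreasing in $\gamma$ and $u_j$ maximizes $\mathcal{G}_{\gamma_j}$), and the identity $F_{\gamma_j}(\mu_j)=\mathcal{G}_{\gamma_j}(u_j)$ (which uses that $u_j$ satisfies the Euler--Lagrange equation, so $\mu_j$ is simultaneously $MA(u_j)$ and the Gibbs measure of $u_j$), one squeezes out $\langle u_j,\mu_j\rangle\geq -C''$. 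Your proof needs this conditional structure; without it you have not established alternative (1) under the negation of alternative (2), you have only established alternative (1) under an additional unproved hypothesis.

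A secondary issue: in the blow-up analysis you flip back and forth between the thresholds $n^n$ and $1$ for the atomic mass of $(dd^cu)^n$. The localized inequality from \cite{ce} gives $c_z(u)\geq n/\bigl(\int_{\{z\}}(dd^cu)^n\bigr)^{1/n}$, so $c_z(u)\leq n$ forces $\int_{\{z\}}(dd^cu)^n\geq 1$, not $\geq n^n$. You eventually self-correct, but the statement \lq\lq any interior atom has mass $\geq n^n$\rq\rq\ is wrong and would break the mass-accounting step if used. The paper combines the bound $\int_\Omega(dd^cu)^n\leq 1$ (valid because $\mu_j$ are probability measures and $u_j\to u$ in $L^1_{loc}$), semi-continuity of complex singularity exponents (to conclude $c_z(u)\leq n$ on a compact set from unboundedness of $\int_K e^{-(n+\delta)u_j}\,dV$), and the threshold $1$ to force the single Dirac mass; the converse bound $c_{z_0}(u)\geq n$ comes from the quasi-sharp Brezis--Merle inequality. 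Your finite-cover argument is an acceptable variant in spirit, but the arithmetic of the threshold must be $1$ throughout, and you still need the (conditional) energy compactness that you have not actually established.
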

\begin{proof}
We will use the notation from section \ref{sub:Quasi-B-M-in}. First
note that since $\gamma\mapsto\mathcal{L}_{\gamma}$ is decreasing
we have that $\mathcal{G}_{\gamma}\leq\mathcal{G}_{\gamma_{*}}$ if
$\gamma<\gamma_{*}$ and in particular $\sup\mathcal{G}_{\gamma}\leq\sup\mathcal{G}_{\gamma_{*}}.$
Hence, for $\gamma_{1}<\gamma_{i}<n+1$ we get \begin{equation}
-C:=\mathcal{G}_{\gamma_{1}}(u_{1})\leq\mathcal{G}_{\gamma_{_{i}}}(u_{i})\leq\mathcal{G}_{n+1}(u_{i})\label{eq:lower bd on free energy}\end{equation}
Now if the second alternative in the theorem does not hold for $u_{j}$
then Lemma \ref{lem:bounded using alpha} shows that there exists
$\delta$ such that the free energy functional $F_{n+1+\delta}$ is
uniformly bounded from above along $(u_{j})$ and hence so are the
functionals $\mathcal{G}_{n+1+\delta}$ (as explained in connection
to Lemma \ref{lem:bounded using alpha}). Combined with the lower
bound \ref{eq:lower bd on free energy} this means that \[
\mathcal{E}(u_{j})\geq-C.\]
 Hence, the Moser-Trudinger inequality applied to a fixed $\gamma_{1}<n+1$
(i.e. the bound $\mathcal{G}_{\gamma_{1}}\leq C)$ shows that $\int e^{-pu_{j}}\leq C_{p}$
for any $p>0.$ But then it follows from general principles (for the
same reasons as in the Kähler case) that $\int e^{-pu_{j}}\rightarrow\int e^{-pu_{j}},$
i.e. $\left\Vert e^{-u}\right\Vert _{L^{p}(\Omega)}\rightarrow\left\Vert e^{-u}\right\Vert _{L^{p}(\Omega)}$
and even more precisely that \begin{equation}
e^{-u_{j}}\rightarrow e^{-u},\,\,\,\,\mbox{in\,}L^{p}(\Omega).\label{eq:conv of densities in l2}\end{equation}
In particular $\mathcal{L}_{n+1}(u_{j})\rightarrow\mathcal{L}_{n+1}(u),$
as $j\rightarrow\infty.$ Moreover, a similar argument shows that
$u$ is a maximizer of $\mathcal{G}_{u}$ and hence the projection
argument gives, as above, that $u$ solves the equation \ref{eq:m-a eq in domain}.
Moreover, the convergence \ref{eq:conv of densities in l2} for $p=2$
gives that the $L^{2}(\Omega)-$norm of the densities $MA(u_{j})/dV-MA(u)/dV$
tend to zero and hence the stability result in \cite{c-p} show that
$u_{j}\rightarrow u$ in $L^{\infty}(\Omega).$

Finally, if there is no blow-up point in $\partial\Omega,$ then there
is a constant $M$ and a compact subset $K$ of $\Omega$ such that
$u\geq-M$ on $\Omega-K$ and hence $\int_{K}e^{-(n+\delta)u_{\gamma}}dV$
is unbounded. Now, if $u_{\gamma}\rightarrow u$ in $L_{loc}^{1},$
then it follows that $\int_{\Omega}(dd^{c}u)^{n}\leq1$ (see for example
the appendix in \cite{de}). By the semi-continuity of complex singularity
exponents \cite{d-k} there is a neighborhood $U$ of $K$ such that
$\int_{U}e^{-(n+\delta)u}=\infty$ for any $\delta>0,$ i.e. $c_{z}(u)\leq n,$
for any $z\in K.$ But then it follows from \ref{eq:local alg ineq}
that for any $z\in K$ \[
\int_{\{z\}}(dd^{c}\phi)\geq1.\]
Since $\int_{\Omega}(dd^{c}u)^{n}\leq1$ this forces the equation
\ref{eq:ma is dirac in thm conc-comp} to hold for some $z=z_{0}.$
Moreover, since $\int_{\Omega}(dd^{c}u)^{n}\leq1$ we already know,
by the quasi-sharp B-M inequality that, for any $\delta>0$ $e^{-(n-\delta)\phi}$
is in $L^{1}(\Omega)$ and hence $c_{z_{0}}(u)\geq n$. All in all
this means that $c_{z_{0}}(u)=n$ and that ends the proof.\end{proof}
\begin{rem}
In the case when $n=1$ it is well-known that there cannot be any
blow-points on $\partial\Omega$ (see Prop 4 in \cite{m-w}) and we
expect this to be true in general. It also seems natural to conjecture
that the limit $u$ in the second alternative above coincides with
the pluricomplex Green function $g_{z_{0}}$ with a pole at $z_{0}.$
This would in fact follow if $u$ were known a priori to have analytic
singularities at $z_{0},$ i.e $u(z)=\lambda\log(\sum_{i=1}^{m}|f_{i}|^{2})+O(1)$
close to $z_{0},$ where $\lambda\in\R$ and $f_{i}$ are holomorphic.
Indeed, since $c_{z_{0}}(u)=n/\int_{\{z_{0}\}}(dd^{c}u)^{n}$ it would
then follow from the equality case in the inequality \ref{eq:local alg ineq}
(see \cite{de fer-}) that $u(z)=\log|z-z_{0}|^{2}+O(1)$ close to
$z_{0}$ and hence $u=g_{z_{0}}$ by the comparison principle (at
least if a priori $u(z)\rightarrow0$ as $z\rightarrow\partial\Omega).$ 
\end{rem}
When $n=1$ it is well-known that the question whether there exists
solutions of equation \ref{eq:m-a eq in domain} in the critical case
$\gamma=2$ depends on the geometry of $\Omega$ (see \cite{clmp}).
For example, for the disc there is no solution, while there is one
for an annulus. In the case of a general $n$ the sharpness part of
Theorem \ref{thm:quasi-sharp m-t in ball intro} gives that, in the
super critical case $\gamma>n+1,$ the functional $\mathcal{G}_{a}$
is not bounded from above and in particular it has no maximizers (i.e.
the last part of Theorem \ref{thm:existence of solutions in domain}
cannot hold in this range). As for the critical case $\gamma=n+1$
one would expect that there is no solution of the equation \ref{eq:m-a eq in domain}
when $\Omega$ is the ball. For radial solutions this is straight-forward
to check. Indeed, an explicit calculation then reveals that, for any
$\gamma<n+1,$ a radial solution $u_{\gamma}$ is uniquely determined
and hence given by $u_{\gamma}=\phi_{0}^{\epsilon}$ (formula \ref{eq:rescaled f-s in unit-ball})
for some $\epsilon,$ where $\gamma\rightarrow n+1$ corresponds to
$\epsilon\rightarrow0.$ More over, when $\gamma=n+1$ there is no
radial solution and $u_{\gamma}\rightarrow(n+1)\log|z|^{2}$ as $\gamma\rightarrow n+1$
where $u$ has infinite energy, i.e. it is not an element in $\mathcal{E}^{1}(\Omega).$
In fact, in the case $n=1$ \emph{any} solution is radial, as follows
from the method of moving planes \cite{gnn} (which also applies to
the corresponding equation associated to the \emph{real} Monge-Ampère
operator \cite{del}). It hence seems natural to make the following
\begin{conjecture}
In the case of the ball in $\C^{n}$ any solution to equation \ref{eq:m-a eq in domain}
is radial and hence given by $u_{\gamma}$ above.
\end{conjecture}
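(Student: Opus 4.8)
The plan is to drop the $S^{1}$-invariance assumption of Theorem \ref{thm:sharp m-t in ball under inv intro} by running the variational picture of Theorem \ref{thm:existence of solutions in domain} together with geodesic convexity, exactly in the spirit of the Fano computation behind Corollary \ref{cor:balanc}. Fix $\gamma<n+1$. First observe that any $g\in U(n)$ preserves $\mathcal{B}$, the Euclidean volume form $dV$, and the complex Monge--Amp\`ere operator (being a linear biholomorphism with $|\det_{\C}g'|^{2}=1$), so that $u\mapsto u\circ g$ permutes the solutions of \ref{eq:m-a eq in domain} and leaves $\mathcal{G}_{\gamma}$ invariant; also, if $u$ is radial then so is $(dd^{c}u)^{n}$, so a radial function solves \ref{eq:m-a eq in domain} iff it solves the associated radial ODE, and in particular $\phi_{0}^{\epsilon}$ of \ref{eq:rescaled f-s in unit-ball} (for the appropriate $\epsilon=\epsilon(\gamma)$) is a radial solution. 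It therefore suffices to prove (i) that \emph{every} solution of \ref{eq:m-a eq in domain} maximizes $\mathcal{G}_{\gamma}$, and (ii) that the maximizer is unique: then every solution equals the radial function $\phi_{0}^{\epsilon}$. Note that the moving-plane argument which settles $n=1$ (where \ref{eq:m-a eq in domain} is a semilinear equation for the ordinary Laplacian, \cite{gnn}) does not transplant directly to $n\ge2$, since the complex Monge--Amp\`ere operator is invariant under (anti)biholomorphisms but not under reflections in real hyperplanes, and $\mathcal{B}$ admits no (anti)biholomorphic involution whose fixed locus is a real hypersurface; this is why we must argue indirectly.

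For (i), let $u_{0}$ solve \ref{eq:m-a eq in domain}, let $u_{1}\in\mathcal{E}^{1}(\mathcal{B})$ be arbitrary, and join them (by approximation if necessary) with the weak geodesic $t\mapsto u_{t}$ of Lemma \ref{lem:prop of energy along geod in domain}. With $\mathcal{G}_{\gamma}=\mathcal{E}-\mathcal{L}_{\gamma}$, the function $t\mapsto\mathcal{E}(u_{t})$ is affine by Lemma \ref{lem:prop of energy along geod in domain}, while $t\mapsto\mathcal{L}_{\gamma}(u_{t})=-\tfrac{1}{\gamma}\log\int_{\mathcal{B}}e^{-\gamma u_{t}}\,dV$ is convex by Berndtsson's subharmonicity theorem applied to the plurisubharmonic function $(z,t)\mapsto\gamma u_{t}(z)$ on $\mathcal{B}\times\mathcal{A}$ (the non-symmetric counterpart of Proposition \ref{pro:psh variation of integrals}); hence $t\mapsto\mathcal{G}_{\gamma}(u_{t})$ is concave on $[0,1]$. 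Differentiating at $t=0^{+}$, the first bullet of Lemma \ref{lem:prop of energy along geod in domain} gives $\frac{d}{dt}\big|_{t=0^{+}}\mathcal{E}(u_{t})\le\int_{\mathcal{B}}\dot u_{0}(dd^{c}u_{0})^{n}$, while differentiation under the integral and the Euler--Lagrange equation \ref{eq:m-a eq in domain} give $\frac{d}{dt}\big|_{t=0^{+}}\mathcal{L}_{\gamma}(u_{t})=\int_{\mathcal{B}}\dot u_{0}\,e^{-\gamma u_{0}}dV/\int_{\mathcal{B}}e^{-\gamma u_{0}}dV=\int_{\mathcal{B}}\dot u_{0}(dd^{c}u_{0})^{n}$. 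Hence $\frac{d}{dt}\big|_{t=0^{+}}\mathcal{G}_{\gamma}(u_{t})\le0$, and concavity forces $\mathcal{G}_{\gamma}(u_{1})\le\mathcal{G}_{\gamma}(u_{0})$. Thus $u_{0}$ is a global maximizer.

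For (ii), suppose $v$ and $w$ both maximize $\mathcal{G}_{\gamma}$ and join them by the weak geodesic $v_{t}$, $v_{0}=v$, $v_{1}=w$. Since $t\mapsto\mathcal{G}_{\gamma}(v_{t})$ is concave, continuous on $[0,1]$, and equals its maximum at both endpoints, it is constant; as $\mathcal{E}(v_{t})$ is affine, $t\mapsto\log\int_{\mathcal{B}}e^{-\gamma v_{t}}\,dV$ is affine. By the equality case in Berndtsson's theorem, a geodesic with this rigidity property is the flow of a one-parameter subgroup of $\mathrm{Aut}(\mathcal{B})$, i.e.\ $v_{t}=v\circ g_{t}$ with $g_{t}\in\mathrm{Aut}(\mathcal{B})$. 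Pulling back equation \ref{eq:m-a eq in domain} for $v$ by $g_{1}$ (the Monge--Amp\`ere measure and $dV$ both pull back, $dV$ acquiring the factor $|\det_{\C}g_{1}'|^{2}$) and comparing with equation \ref{eq:m-a eq in domain} for $w=v\circ g_{1}$ yields $|\det_{\C}g_{1}'|^{2}=\bigl(\int_{\mathcal{B}}e^{-\gamma v}\,dV\bigr)\big/\bigl(\int_{\mathcal{B}}e^{-\gamma w}\,dV\bigr)$, a constant on $\mathcal{B}$; but an automorphism of $\mathcal{B}$ with constant complex Jacobian lies in $U(n)$. Taking $v=\phi_{0}^{\epsilon}$, which is $U(n)$-invariant, we conclude $w=v\circ g_{1}=v$. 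Together with (i) this shows that $\phi_{0}^{\epsilon}$ is the unique solution of \ref{eq:m-a eq in domain}, as conjectured.

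The main obstacle is the rigidity input in step (ii). The \emph{inequality} in Berndtsson's theorem is local and robust (it is recorded for $S^{1}$-invariant data as Proposition \ref{pro:psh variation of integrals}, and its general form is what (i) needs), but the \emph{equality case} — identifying the geodesics $v_{t}$ along which $t\mapsto\log\int e^{-\gamma v_{t}}$ is affine with flows of automorphisms — has so far been developed in the compact K\"ahler setting \cite{bern1}, and carrying it over to a bounded domain with the boundary constraint $u|_{\partial\mathcal{B}}=0$ (so that only the vector fields of $\mathrm{Aut}(\mathcal{B})$ tangent to $\partial\mathcal{B}$ can occur) requires a genuine adaptation. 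This is compounded by the limited regularity of the weak geodesics produced by Lemma \ref{lem:prop of energy along geod in domain} and \cite{bl}, which are only known to be bounded, so the derivative and affineness statements have to be read in a weak sense throughout; a secondary, more routine point is to establish the full non-symmetric subharmonicity statement used in (i) with the needed generality.
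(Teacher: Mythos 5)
First, be aware that the statement you are trying to prove is explicitly labelled a \emph{Conjecture} in the paper; the authors offer no proof, only the remark that its validity would imply the sharp Moser--Trudinger inequality on $\mathcal{B}$ without $S^{1}$-symmetry. So there is no paper proof to compare against, and your task would be to settle an open question.

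The decisive gap is in step (i), not step (ii). You invoke ``the non-symmetric counterpart of Proposition~\ref{pro:psh variation of integrals}'' to assert that $t\mapsto\log\int_{\mathcal{B}}e^{-\gamma u_{t}}dV$ is concave along a weak geodesic $u_{t}$ with no symmetry assumption, and you describe this as a ``routine'' ingredient. It is not routine, and it is not a consequence of Berndtsson's subharmonicity theorem. What Berndtsson gives is plurisubharmonicity of $\log K_{\phi_{t}}(z,z)$, i.e.\ of the reproducing-kernel diagonal; Proposition~\ref{pro:psh variation of integrals} is deduced from it only \emph{via} Proposition~\ref{pro:bergman kernel under s one inv}, which identifies $K_{\phi}(0,\cdot)$ with the constant function and hence gives $K_{\phi}(0)=1/\int e^{-\phi}$, an identity that uses $S^{1}$-invariance in an essential way. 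Without it, $\int_{\mathcal{B}}e^{-\phi_{t}}=\|1\|_{\phi_{t}}^{2}$ is the squared norm of the constant holomorphic section of the positively curved direct-image bundle, and positivity of curvature gives $dd^{c}_{t}\log\|1\|^{2}\geq-\langle\Theta 1,1\rangle/\|1\|^{2}$, which has the wrong sign to yield the concavity you need. This is precisely the obstruction the paper flags after Proposition~\ref{pro:m-t for bergman kernel in ball} (``From here we can not continue as in the compact case...'') and again at the end of Section~\ref{sec:M-T ineq in ball under invariance-1}. The general statement you need is a complex Pr\'ekopa theorem for the ball without circular symmetry, which is not known and quite possibly false.

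There is also a structural circularity. If the geodesic concavity of $\mathcal{L}_{\gamma}$ in step (i) were available without $S^{1}$-symmetry, then combining it with the affineness of $\mathcal{E}$ and evaluating at the explicit critical point $\phi_{0}^{\epsilon}$ would already bound $\mathcal{G}_{\gamma}$ from above and produce the sharp Moser--Trudinger inequality directly; the radiality conjecture would then be moot for that purpose. The paper's intended logical direction is the opposite: radiality of extremals would be used to \emph{deduce} the unconditioned sharp inequality. An argument that proves the conjecture by first assuming the ingredient that would make it unnecessary is not a proof. Finally, you are right that step (ii) is also incomplete---the equality case of the convexity (geodesics along which $\log\int e^{-\gamma u_{t}}$ is affine come from automorphism flows) has only been established in the compact K\"ahler setting and, as you say, would need a genuine and nontrivial adaptation to a domain with boundary values $u|_{\partial\mathcal{B}}=0$ and with weak geodesics that are merely bounded.
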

If true the previous conjecture implies the validity of the sharp
Moser-Trudinger inequality (without assuming $S^{1}-$invariance),
i.e. that $\mathcal{G}_{\gamma}$ is bounded in the critical case
$\gamma=n+1.$ Indeed, given $u\in\mathcal{H}_{0}(\mathcal{B})$ we
have \[
\mathcal{G}_{\gamma}(u)=\lim_{\epsilon\rightarrow0}\mathcal{G}_{\gamma(\epsilon)}(u)\leq\lim_{\epsilon\rightarrow0}\sup\mathcal{G}_{\gamma(\epsilon)}\]
But by the previous theorem the sup of $\mathcal{G}_{\gamma(\epsilon)}$
is attained for some function $u_{\gamma(\epsilon)}$ satisfying the
equation \ref{eq:m-a eq in domain}, which if the conjecture above
is correct has to be radial and thus coincides with $\phi_{0}^{\epsilon}$
above. Finally, as shown towards the end in section \ref{sec:M-T ineq in ball under invariance-1}
$\mathcal{G}_{a}(\phi_{0}^{\epsilon})\rightarrow C_{n}$ and hence
$\mathcal{G}_{a}(u)\leq C_{n}.$ Note also that by Theorem \ref{thm:sharp m-t in ball under inv intro}
it would be enough to know that any solution is $S^{1}-$invariant
in order to deduce the sharp Moser-inequality using the previous argument.


\begin{thebibliography}{55}
\bibitem{ce}Ahag, P.; Cegrell, U.; Ko\l{}odziej, S.; Ph\d{a}m, H.
H.; Zeriahi, A. Partial pluricomplex energy and integrability exponents
of plurisubharmonic functions. Adv. Math. 222 (2009), no. 6, 2036\textendash{}2058. 

\bibitem{a-c-c}Ahag, P; Cegrell, U; Czyz, R: On Dirichlet's principle
and problem. arXiv:0912.1244 

\bibitem{au}Aubin, T: Réduction du cas positif de l´equation de Monge-Amp`ere
sur les variétés kahleriennes compactes à la démonstration d'une inégalité,
Journal of Functional Analysis 57 (1984), 143-153.

\bibitem{au2}Aubin, T: Some nonlinear problems in Riemannian geometry.
Springer-Verlag, Berlin, 1998.

\bibitem{Bec}Beckner, W: Sharp Sobolev inequalities on the sphere
and the Moser-Trudinger inequality. Annals of Math. 138 (1993), 213-242.

\bibitem{b-g-z}Benelkourchi, S; Guedj, V; Zeriahi, A: Plurisubharmonic
functions with weak singularities. Complex analysis and digital geometry,
57\textendash{}74, Acta Univ. Upsaliensis Skr. Uppsala Univ. C Organ.
Hist., 86, Uppsala Universitet, Uppsala, 2009

\bibitem{b-d}Berman, R.J:; Demailly, J-P: Regularity of plurisubharmonic
upper envelopes in big cohomology classes. Arkiv för Matematik (to
appear) arXiv:0905.1246 

\bibitem{berm1}Berman, R.J: Analytic torsion, vortices and positive
Ricci curvature. arXiv:1006.2988 

\bibitem{berm2}Berman, R.J: A thermodynamical formalism for Monge-Ampere
equations, Moser-Trudinger inequalities and Kahler-Einstein metrics.
arXiv:1011.3976 

\bibitem{bbgz}Berman, R.J.: Boucksom, S; Guedj, V; Zeriahi, A: A
variational approach to complex Monge-Ampère equations. arXiv:0907.4490

\bibitem{bbegz}Berman, R.J.: Boucksom, S; Eyssidieux, P; Guedj, V;
Zeriahi, A: Ricci iterationa and Kähler-Ricci flow on log-Fano varities.
In preperation.

\bibitem{bern1}Berndtsson, B: Curvature of vector bundles associated
to holomorphic fibrations. Annals of Math. Vol. 169 (2009), 531-560 

\bibitem{bern1b}Berndtsson, B: Positivity of direct image bundles
and convexity on the space of Kähler metrics. J. Differential Geom.
Volume 81, Number 3 (2009), 457-482. 

\bibitem{bern2}Berndtsson, B: A Brunn-Minkowski type inequality for
Fano manifolds and the Bando-Mabuchi uniqueness theorem. arXiv:1103.0923 

\bibitem{bl}{]}Blocki, B: On the L p-stability for the complex Monge-Ampere
operator, Michigan Math. J, 1995

\bibitem{begz}Boucksom, S; Essidieux,P: Guedj,V; Zeriahi: Monge-Ampere
equations in big cohomology classes. Acta. Math. (to appear). arXiv:0812.3674

\bibitem{br-m}Brézis, H; Merle, F: Uniform estimates and blow-up
behavior for solutions of $-\Delta u=V(x)e^{u}$ in two dimensions,,
Communs Partial Diff. Eqns 16 (1992) (8 \& 9), pp. 1223\textendash{}1253. 

\bibitem{clmp}Caglioti.E; Lions, P-L; Marchioro.C; Pulvirenti.M:
A special class of stationary flows for two-dimensional Euler equations:
a statistical mechanics description. Communications in Mathematical
Physics (1992) Volume 143, Number 3, 501-525

\bibitem{ca}Campana, F: Connexit´e rationnelle des vari´et´es de
Fano. Ann. Sci. ´Ecole Norm. Sup. (4) 25 (1992), no. 5, 539\textendash{}545

\bibitem{ce0}Cegrell, U: Pluricomplex energy. Acta Math. 180 (1998),
no. 2, 187\textendash{}217

\bibitem{ce3}Cegrell, U: Measures of finite pluricomplex energy.
arXiv:1107.1899

\bibitem{c-z}Cegrell, U; Zeriahi A., Subextension of plurisubharmonic
functions with bounded Monge-Amp`ere mass. C. R. Acad. Sci. Paris,
Ser. I 336 (2003).

\bibitem{c-p}Cegrell, U; Persson, L: The Dirichlet problem for the
complex Monge-Ampère operator: stability inL 2.Michigan Math. J.,
39 (1992), 145\textendash{}151.

\bibitem{ce2}Cegrell, U: Approximation of plurisubharmonic functions
in hyperconvex domains. Complex analysis and digital geometry, 125\textendash{}129,
Acta Univ. Upsaliensis Skr. Uppsala Univ. C Organ. Hist., 86, Uppsala
Universitet, Uppsala, 2009.

\bibitem{de fer-}de Fernex, T., Ein, L; Mustat\v{ }a: Multiplicities
and log canonical thresholds. J. Algebraic Geom. 13 (2004) 603\textendash{}615.

\bibitem{del}Delanoe, P: Radially symmetric boundary value problems
for real and complex elliptic Monge-Ampère equations\textquotedbl{},
J. Diff. Eq. 58 (1985) 318-344.

\bibitem{de}Demailly, J-P: Estimates on Monge-Ampère operators derived
from a local algebra inequality. Complex analysis and digital geometry,
131\textendash{}143, Acta Univ. Upsaliensis Skr. Uppsala Univ. C Organ.
Hist., 86, Uppsala Universitet, Uppsala, 2009.

\bibitem{d-k}Demailly, J-P; Kollar, J: Semi-continuity of complex
singularity exponents and Kähler-Einstein metrics on Fano orbifolds.
Ann. Sci. École Norm. Sup. (4) 34 (2001), no. 4, 525\textendash{}556.

\bibitem{din}Ding, W.: Remarks on the existence problem of positive
Kähler-Einstein metrics. Math. Ann. 463--472 (1988).

\bibitem{d-t}Ding, W. and Tian, G.: The generalized Moser-Trudinger
Inequality. Proceedings of Nankai International Conference on Nonlinear
Analysis, 1993.

\bibitem{font}Fontana, L.: Sharp borderline Sobolev inequalities
on compact Riemannian manifolds. Comment. Math. Helv. 68 (1993), 415-454

\bibitem{f-m}Fontana, L; Morpurgo, C: Adams inequalities on measure
spaces. arXiv:0906.5103

\bibitem{gnn}Gidas, B;Ni, W. M.; Nirenberg, L: Symmetry and related
properties via the Maxi- mum principle, Comm. Math. Phys. 68 (1979),
209-243.

\bibitem{g-z}Guedj,V; Zeriahi, A: Intrinsic capacities on compact
Kähler manifolds. J. Geom. Anal. 15 (2005), no. 4, 607--639.

\bibitem{g-z2}Guedj,V; Zeriahi, A: The weigthed Monge-Ampère energy
of quasiplurisubharmonic functions. arXiv:math/0612630 

\bibitem{h}Hwang, M: On the degrees of Fano four-folds of Picard
number 1, J. Reine Angew. Math. 556 (2003), pp. 225\textendash{}235. 

\bibitem{il}Ilias, S: Constantes explicites pour les inégalités de
Sobolev sur les variétés riemanniennes compactes. Annales de l'institut
Fourier, 33 no. 2 (1983), p. 151-165 

\bibitem{k}Kiessling M.K.H.: Statistical mechanics of classical particles
with logarithmic interactions, Comm. Pure Appl. Math. 46 (1993), 27-56.

\bibitem{kl}Klimek, M: Pluripotential theory, Clarendon Press/Oxford
Univ. Press ( 1991

\bibitem{k-ma}Kollár, J; Matsusaka, T: Riemann-Roch Type Inequalities.
American J. of Math. Vol. 105, No. 1, Feb., 1983

\bibitem{kmm}Kollar, J; Miyaoka, Y; Mori, S: Rational connectedness
and boundedness of Fano manifolds. J. Diff{}erential Geom. 36 (1992),
no. 3, 765\textendash{}779.

\bibitem{ko}Ko\l{}odziej, S.: The complex Monge\textendash{}Ampère
equation. Acta Math. 180, 69\textendash{}117 (1998).

\bibitem{li}Li, P: On the Sobolev constant and the p-spectrum of
a compact. Riemannian manifold. Ann. scient. de l'É.N.S. 4e série,
tome 13, no 4 ( 1980), p. 451-468

\bibitem{m-w}Ma, L, Wei, J.C: Convergence for a Liouville equation.
Comment. Math. Helvetici (2001), 76 (3), pg. 506-514 

\bibitem{m}Moser, J. A sharp form of an inequality by N. Trudinger.
Indiana Univ. Math. J. 20 (1970/71), 1077\textendash{}1092. 

\bibitem{p-s+}Phong, D.H: Song, J; Sturm, J; Weinkove, B: The Moser-Trudinger
inequality on Kahler-Einstein manifolds. Amer. J. Math. 130 (2008),
no. 4, 1067-1085,  arXiv:math/0604076 

\bibitem{sz-to}Székelyhidi, G; Tosatti.V: Regularity of weak solutions
of a complex Monge-Ampère equation. To appear in Analysis \& PDE

\bibitem{ti1}Tian, G: On Kähler-Einstein metrics on certain K¨ahler
manifolds with C1(M) > 0, Inventiones Mathematicae 89 (1987), 225\textendash{}246.

\bibitem{ti2}Tian, G; On Calabi's conjecture for complex surfaces
with positive first Chern class. Invent. Math. Vol. 101, Nr. 1 (1990) 

\bibitem{ti-1}Tian, G: Canonical Metrics in Kähler Geometry, Birkh¨auser,
2000.

\bibitem{t-y}Tian, G; Yau, S-T: Kahler-Einstein metrics on complex
surfaces with C 1 (M) positive. Commun. Math. Phys. 112, 175-203 (1987)

\bibitem{tr}Trudinger, N.S: On imbeddings into Orlicz spaces and
some applications. J. Math. Mech. 17 1967 473\textendash{}483. 

\bibitem{ts}Tso, K: On symmetrization and Hessian equations. Journal
d'Analyse Mathématique, Vol. 52, Number 1, 94-106, 

\bibitem{z}Zelditch, S: Book review of {}``Holomorphic Morse inequalities
and Bergman kernels Journal'' (by Xiaonan Ma and George Marinescu)
in Bull. Amer. Math. Soc. 46 (2009), 349-361. 

\bibitem{zer}Zeriahi, A: Volume and capacity of sublevel sets of
a Lelong class of psh functions. Indiana Univ. Math. J. 50 (2001),
no. 1, 671\textendash{}703.
\end{thebibliography}
\end{document}